\newtheorem{thm}{Theorem}
\newtheorem{cor}{Corollary}
\newtheorem{lem}{Lemma}
\newcommand{\expect}[1]{\mathbb{E}\left[#1\right]}
\newcommand{\norm}[1]{||{#1}||}
\newcommand{\script}[1]{{{\cal{#1} }}}
\begin{document}

\title{Adaptive Optimization for Stochastic Renewal Systems}

\author{Michael J. Neely \\ University of Southern California\\ \url{https://viterbi-web.usc.edu/~mjneely/}
}


\maketitle


\begin{abstract} 
This paper considers online optimization for a system that performs a sequence of back-to-back tasks. Each task can be processed in one of 
multiple processing modes that affect the duration of the task, the reward earned, and an additional vector of penalties (such as energy or cost). Let $A[k]$ be a random matrix of parameters that specifies the duration, reward, and penalty vector under each processing option for task $k$. The goal is to observe $A[k]$ at the start of each new task $k$ and then choose a processing mode for the task so that, over time, time average reward is maximized subject to time average penalty constraints. This is a \emph{renewal optimization problem} and is challenging because the probability distribution for the $A[k]$ sequence is unknown.  Prior work shows that any algorithm that comes within $\epsilon$ of optimality must have
$\Omega(1/\epsilon^2)$ convergence time.  The only known algorithm that can meet this bound operates without 
time average penalty constraints and uses a diminishing stepsize that cannot adapt when probabilities change. This paper develops a new algorithm that is adaptive and comes within $O(\epsilon)$ of optimality for any interval of $\Theta(1/\epsilon^2)$ tasks over which  probabilities are held fixed, regardless of probabilities before the start of the interval. 
\end{abstract} 

\section{Introduction}

This paper considers online optimization for a system that performs a sequence of tasks (Fig. \ref{fig:tasks}). Each new task starts when the previous one ends.  At the start of each new task $k \in \{1, 2, 3, ...\}$, a matrix $A[k]$ of parameters about the task is revealed. Initially, we assume the matrices $\{A[k]\}_{k=1}^{\infty}$ are independent and identically distributed (i.i.d.) over tasks (this is eventually relaxed to assume the i.i.d. property holds only over a finite block of $m$ consecutive tasks).  
 The controller observes $A[k]$ and then makes a decision about how the task should be processed. The decision, together with  $A[k]$, determines the duration of the task, the reward earned by processing that task, and a vector of additional penalties. Specifically, define 
\begin{align*}
T[k] &= \mbox{duration of task $k$} \\
R[k] &= \mbox{reward of task $k$} \\
Y[k] &=(Y_1[k], \ldots, Y_n[k]) \\
&=\mbox{penalty vector for task $k$}
\end{align*}
where $n$ is a fixed positive integer.   The duration of each task is assumed to be lower bounded by some value $t_{min}>0$.

Each matrix $A[k]$ is assumed to have  finite size $M[k]\times (n+2)$, where $M[k]$ is the number of rows and can change over different tasks $k$. 
The value $M[k]$ is a positive integer that is determined by 
the number of task processing options for task $k$. Each row $r\in \{1, \ldots,M[k]\}$ of matrix $A[k]$ is a row vector of size $n+2$ of the form: 
\begin{equation} \label{eq:row-form}
[T_r[k], R_r[k], Y_{r,1}[k], ..., Y_{r,n}[k]]
\end{equation} 
which represents the duration, reward, and penalties for task $k$ if processing option $r$ is chosen.  The goal is to make a sequence of decisions  that maximizes  time average reward per unit time subject to time average constraints on the penalties. This problem is called a \emph{renewal optimization problem} \cite{renewal-opt-tac}\cite{sno-text}. The problem is challenging because the probability distribution associated with the random matrices $A[k]$ is unknown.  Without a-priori probability information, it is shown in \cite{neely-renewal-jmlr} that any online algorithm that provides an $\epsilon$-approximate solution over tasks $\{1, \ldots, m\}$ must have $m\geq \Omega(1/\epsilon^2)$. An algorithm is developed in \cite{neely-renewal-jmlr} that achieves this optimal asymptotic convergence time for the special case when there are no penalties $Y[k]$.  The algorithm in \cite{neely-renewal-jmlr} uses a Robbins-Monro iteration with a 
vanishing stepsize $\eta[k]=\Theta(1/k)$. The algorithm  assumes $\{A[k]\}$ is i.i.d. forever, starting with task $1$, and cannot adapt if probabilities change at 
some unknown time in the timeline. 
In contrast, the current paper develops a novel algorithm that is \emph{adaptive}.  While our algorithm does not use Robbins-Monro iterations, it can be viewed as having a \emph{constant} stepsize that leads to optimal convergence guarantees over any block of  $\Theta(1/\epsilon^2)$ tasks on which the system probabilities are fixed.  The algorithm also allows for general penalty processes $Y[k]$.

Specifically, imagine an extended situation where independent $\{A[k]\}$ matrices are generated according to 
one distribution for tasks $\{1, \ldots, m_1\}$, another distribution for tasks $\{m_1+1, \ldots, m_2\}$, and another distribution for tasks $\{m_2+1, \ldots, m_3\}$. If the points of change $m_1, m_2, m_3$ were known (and if there were no $Y[k]$ penalties), the Robbins-Monro algorithm of \cite{neely-renewal-jmlr} could be used and its stepsize could be  reset at the times $m_1, m_2, m_3$ to achieve desirable performance over each block.  However, if the times $m_1, m_2, m_3$ are unknown and the stepsize is either never reset, or is reset at incorrect times, then performance over the blocks $\{m_1+1, \ldots, m_2\}$ and $\{m_2+1, \ldots, m_3\}$ can be far from optimal.  The algorithm of the current paper can run over the infinite time horizon, yet 
provides analytical guarantees over \emph{any finite block of consecutive tasks} for which $\{A[k]\}$ is i.i.d. with some (unknown) distribution, regardless of system history before the start of the block. 
Thus, analytical properties over each one of the separate blocks $\{1, \ldots, m_1\}$, $\{m_1+1, \ldots, m_2\}$, and $\{m_2+1, \ldots, m_3\}$ are obtained even when  $m_1, m_2, m_3$ are unknown to the algorithm. 

\begin{figure}[t]
   \centering
   \includegraphics[height=1.3in]{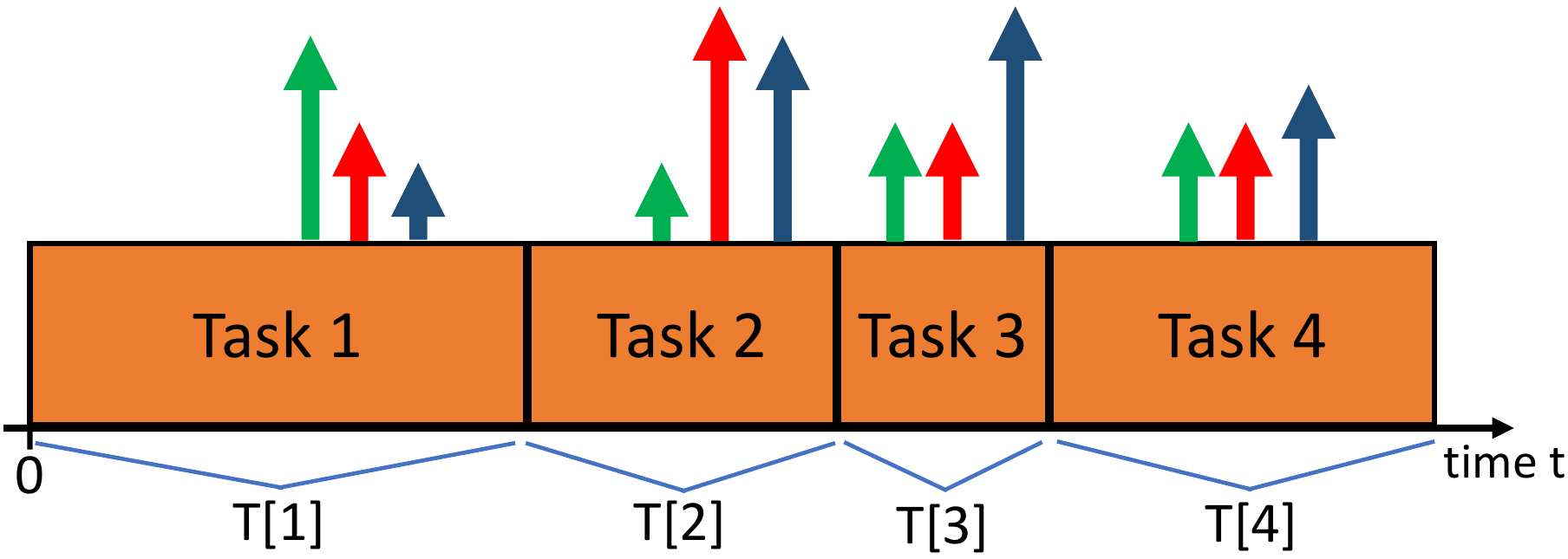} 
   \caption{Four sequential tasks in the timeline. Vertical arrows for each task $k$ represent values for reward $R[k]$ and penalty $Y[k]$. In this example, green is reward (profit), red is energy, blue is quality. The duration of task $k$ and the height of its arrows depend on choices made at the start of task $k$.}
   \label{fig:tasks}
\end{figure}

\subsection{Example application} 

This renewal optimization problem has numerous applications, including 
video processing, image classification, transportation scheduling, and wireless multiple access. For example, consider a device that performs back-to-back image classification tasks with the goal of maximizing time average profit subject to a time average power constraint of $p_{av}$ and an average per-task quality constraint of  $q_{av}$. 
For each task, the device chooses between one of three classification algorithms, each having a different duration of time and yielding certain profit, energy, and quality characteristics. Let $C[k]$ be a $3 \times 4$ 
matrix of parameters for task $k$, such as
\begin{equation} \label{eq:example-matrix}
C[k]=\begin{array}{ccccc} 
\mbox{duration} & \mbox{profit} & \mbox{energy} & \mbox{quality} &  \\
5.1 & 3.6 & 0.3 & 2.0 & \mbox{alg 1}\\
10.2 & 2.8 & 0.7 & 3.0 & \mbox{alg 2}\\
2.7 & 3.0 & 0.2 & 1.0 & \mbox{alg 3}
\end{array}
\end{equation} 
Choosing a classification algorithm for task $k$ reduces to choosing a row of $C[k]$. If the controller choses row 1 then 
$$ T[k]=5.1, R[k]=3.6, Energy[k]=0.3, Quality[k]=2.0$$
The average quality constraint has units of quality/task, while the average power constraint has units of energy/time. To consistently enforce both constraints, we can define penalties $Y_1[k]$ and $Y_2[k]$ for each task $k$ by 
\begin{align}
Y_1[k] &= q_{av} - Quality[k] \label{eq:Y1}\\
Y_2[k] &= Energy[k] - p_{av}T[k] \label{eq:Y2}
\end{align}
Then 
\begin{align*}
&\left(\limsup_{m\rightarrow\infty}\frac{1}{m}\sum_{k=1}^mY_1[k] \leq 0\right) \implies \liminf_{m\rightarrow\infty}\frac{1}{m}\sum_{k=1}^m Quality[k] \geq q_{av} \\
&\left(\limsup_{m\rightarrow\infty}\frac{1}{m}\sum_{k=1}^m Y_2[k]\leq 0 \right) \implies \limsup_{m\rightarrow\infty}\frac{\sum_{k=1}^mEnergy[k]}{\sum_{k=1}^mT[k]} \leq p_{av} 
\end{align*}
where we recall that $T[k]\geq t_{min}$ for all $k$ so there are no divide-by-zero issues.  

We have expressed this example using a matrix $C[k]$ with columns that represent duration, profit, energy, and quality. This can be equivalently represented by a matrix $A[k]$ where the last two columns of $C[k]$ are transformed to the corresponding $Y_1[k]$ and $Y_2[k]$ values, so that for row $r$ of matrix $A[k]$ we have 
\begin{align*}
&Y_{r,1}[k]=q_{av} - Quality_r[k]\\
&Y_{r,2}[k]=Energy_r[k]-p_{av}T_r[k]
\end{align*}
where $Quality_r[k]$ and $T_r[k]$ are taken from row $r$ of the matrix $C[k]$. 

Under a given policy and for each positive integer $m$, define $\overline{T}[m]$ as the empirical average duration per task, averaged over the first $m$ tasks: 
$$\overline{T}[m] = \frac{1}{m}\sum_{k=1}^m T[k]$$
Define $\overline{R}[m]$ and 
$\overline{Y}[m]=(\overline{Y}_1[m], \overline{Y}_2[m])$ similarly.  The time average profit over the first $m$ tasks is
$$ \frac{R[1]+R[2]+\ldots+R[m]}{T[1]+T[2]+\ldots+T[m]} = \frac{\overline{R}[m]}{\overline{T}[m]}$$
Suppose we can make decisions to ensure $\overline{R}[m], \overline{T}[m], \overline{Y}_i[m]$ converge to some constants $\overline{R}, \overline{T}, \overline{Y}_i$ with probability 1 as $m\rightarrow\infty$. The problem of maximizing time average profit subject to the desired constraints can be informally described as: 
\begin{align}
\mbox{Maximize:} \quad & \overline{R}/\overline{T} \label{eq:ex1}\\
\mbox{Subject to:} \quad &\overline{Y}_i\leq 0 \quad \forall i \in \{1,2\}\label{eq:ex2}\\
& (T[k], R[k], Y[k]) \in Row(A[k]) \quad \forall k \in \{1, 2, 3, \ldots\} \label{eq:ex3}
\end{align}
where $Row(A[k])$ denotes the set of rows of $A[k]$. This description illustrates our goals, but is informal because 
because it implicitly requires the sample path limits to exist with probability 1.   A more precise optimization is posed in the next subsection and a closely related deterministic problem is in Section \ref{section:deterministic}.

For wireless multiple access applications, the variable durations of time 
relate to transmission times to the uplink, which depend on code selection and power allocation, while penalties relate to transmission reliability and power expenditure. For ridesharing applications, 
the variable task lengths represent transportation times and the reward is the profit earned by the driver.

\subsection{Convergence time} 

It is assumed that $(T[k], R[k], Y[k])$ is a bounded random vector with a well defined expectation (see boundedness assumptions in 
Section \ref{bounded}). 
It is convenient to work with expectations rather than sample paths. This is similar to the treatment in \cite{renewal-opt-tac}\cite{neely-renewal-jmlr}. For a general scenario with $n$ penalties $Y[k]=(Y_1[k], \ldots, Y_n[k])$, consider the problem 
\begin{align}
\mbox{Maximize:} \quad &\limsup_{m\rightarrow\infty} \frac{\expect{\overline{R}[m]}}{\expect{\overline{T}[m]}} \label{eq:p1}\\
\mbox{Subject to:} \quad &\limsup_{m\rightarrow\infty} \expect{\overline{Y}_i[m]}\leq 0 \quad \forall i \in \{1,\ldots, n\}\label{eq:p2}\\
& (T[k], R[k], Y[k]) \in Row(A[k]) \quad \forall k \in \{1, 2, 3, \ldots\} \label{eq:p3}
\end{align}
The problem is assumed to be feasible, 
meaning that it is possible to satisfy the constraints \eqref{eq:p2}-\eqref{eq:p3}. Let $\theta^*$ denote the optimal objective in \eqref{eq:p1}. Fix $\epsilon>0$. A decision policy is said to be an $\epsilon$-approximation with \emph{convergence time $d$} if 
\begin{align}
&\frac{\expect{\overline{R}[m]}}{\expect{\overline{T}[m]}} \leq \theta^* + \epsilon \quad \forall m \geq d \label{eq:converge1}\\
&\expect{\overline{Y}_i[m]} \leq \epsilon \quad \forall i \in \{1, \ldots, n\} \quad \forall m \geq d \label{eq:converge2}
\end{align}
A decision policy is said to be an $O(\epsilon)$-approximation (with convergence time $d$) if all appearances of $\epsilon$ in the above definition are  replaced by some constant multiple of $\epsilon$.   Given any $\epsilon>0$, the algorithm of this paper produces an $O(\epsilon)$-approximation with convergence time $1/\epsilon^2$ over any interval of $1/\epsilon^2$ tasks.  When operated over an infinite horizon, we show that sample path time averages are similar to the time average expectations. 

\subsection{Prior work} 

The fractional structure of the objective \eqref{eq:p1} is qualitatively similar to a \emph{linear fractional program} \cite{boyd-convex}\cite{fox-linear-fractional-mdp}. 
A nonlinear change of variables in \cite{boyd-convex} shows how to convert a linear fractional program into a convex program. A different nonlinear change of variables is used in \cite{fox-linear-fractional-mdp}. The work \cite{fox-linear-fractional-mdp} uses the method for offline design of an optimal solution to a Markov decision problem. There,  the linear fractional structure relates to minimizing a time average per unit time, similar to our fractional objective \eqref{eq:p1}.  However, the offline computational 
methods of \cite{boyd-convex}\cite{fox-linear-fractional-mdp} cannot be directly used for our online problem. That is 
because time averages are not preserved under nonlinear transformations.  Related work in \cite{asynchronous-markov}\cite{neely-fractional-markov-allerton2011} treats offline and online control for opportunistic Markov decision problems where states include random perturbations similar to the $A[k]$ parameters of the current paper. 
Data  center applications of renewal optimization are in \cite{xiaohan-datacenter}. Applications to general asynchronous systems are 
in \cite{xiaohan-asynch-renewal}.

The renewal optimization problem \eqref{eq:ex1}-\eqref{eq:ex3} is first posed in \cite{renewal-opt-tac} (see also Chapter 7 of \cite{sno-text}).  The solution in \cite{renewal-opt-tac} 
constructs \emph{virtual queues} for each time average inequality constraint \eqref{eq:ex2} and 
makes a decision for each task $k$ to minimize a \emph{drift-plus-penalty ratio}: 
\begin{equation} \label{eq:DPP-ratio}
 \frac{\expect{\tilde{\Delta}[k] -vR[k]}}{\expect{T[k]}} 
 \end{equation} 
where $\Delta[k]$ is the change in a Lyapunov function on the virtual queues; $\tilde{\Delta}[k]$ is a 
simplified version of $\Delta[k]$ that neglects second order terms; and $v>0$ is a parameter that affects accuracy. An exact minimization of the ratio of expectations in \eqref{eq:DPP-ratio} cannot be done unless the probability distribution for the $A[k]$ states is known.  A method for approximating the minimization of \eqref{eq:DPP-ratio} is given in \cite{renewal-opt-tac} based on sampling the $A[k]$ values over a window of previous tasks, although only a partial convergence analysis is given there.   This prior work is based on the Lyapunov drift and max-weight scheduling methods developed  by Tassiulas and Ephremides  for fixed timeslot queueing systems \cite{tass-server-allocation}\cite{tass-radio-nets}.

A different approach in \cite{neely-renewal-jmlr}  uses a \emph{Robbins-Monro iteration} for a special case problem that seeks only to maximize time average reward  $\overline{R}/\overline{T}$ (with no penalties $Y[k]$).    The policy of \cite{neely-renewal-jmlr} chooses $(T[k], R[k]) \in Row(A[k])$ for each task $k$ to minimize $R[k] - \theta[k-1] T[k]$, where $\theta[k-1]$ is an estimate of $\theta^*$ that is updated at the completion of each task according to the Robbins-Monro iteration
$$ \theta[k] = \theta[k-1] + \eta[k](R[k] - \theta[k-1]T[k])$$
where $\eta[k]$ is a stepsize. See \cite{robbins-monro} for the original Robbins-Monro algorithm and \cite{borkar-book}\cite{stochastic-approx-book}\cite{kushner-stochastic-approx}\cite{proximal-robbins-monro}\cite{SGD-robust}\cite{robbins-monro-binary} for extensions in other contexts.
This  approach is desirable because it does not require sampling from a window of past $A[k]$ values. Further, under a particular vanishing stepsize rule, 
the optimality gap of the algorithm is shown to decrease like $O(1/\sqrt{k})$, which is also shown to be asymptotically 
optimal \cite{neely-renewal-jmlr}.   However, it is unclear how to extend the Robbins-Monro technique to handle 
time average penalties $Y[k]$ as considered in the current work. Further, while the vanishing stepsize method in \cite{neely-renewal-jmlr} enables fast convergence,  it makes increasing investments in the probability model and cannot adapt if the system probabilities change. For example, if  $A[k]$ is sampled from a single probability distribution for the first $10^4$ tasks, the $\theta[k]$ value quickly converges to a value near the optimal $\theta^*$. Now suppose that, starting with task $10^4+1$, nature switches the probability distribution (without informing the algorithm of this change). The vanishing stepsize makes it difficult for $\theta[k]$ to change to what it needs for the new distribution.  It may take an additional $10^4$ tasks before $\theta[k]$ starts to approach the new $\theta^*$ value needed for efficient decisions on the new distribution.  The analysis in \cite{neely-renewal-jmlr} shows a fixed stepsize rule is better for adaptation but has a slower convergence time of  $O(1/\epsilon^3)$.

Fixed stepsizes are known to enable adaptation in other contexts. For online convex optimization, 
Zinkevich shows in 
\cite{online-convex} that a fixed stepsize enables regret to be within $O(\epsilon)$ of optimality (as compared to the best fixed decision in hindsight) \emph{over any sequence of $\Theta(1/\epsilon^2)$ steps}.    For adaptive estimation, a recent work \cite{robbins-monro-fixed-stepsize-arxiv} 
considers the problem of removing bias from Markov-based samples. The work \cite{robbins-monro-fixed-stepsize-arxiv}  develops an adaptive 
Robbins-Monro technique that averages between two fixed stepsizes.  Adaptive algorithms are also of recent interest in convex bandit problems, see \cite{Luo2022AdaptiveBC}\cite{haipeng-bandit1}.

\subsection{Our contributions} 

We develop a new algorithm for renewal optimization that, unlike \cite{renewal-opt-tac}, does not require probability information or sampling from the past. The new algorithm has explicit convergence guarantees and 
meets the optimal asymptotic convergence time bound of \cite{neely-renewal-jmlr}. Unlike the Robbins-Monro algorithm of \cite{neely-renewal-jmlr}, our new algorithm allows for general time average penalty constraints. Furthermore, our algorithm is \emph{adaptive} and achieves performance within $O(\epsilon)$ of optimality over any sequence of $\Theta(1/\epsilon^2)$ tasks. This fast adaptation is enabled by using  a new hierarchical decision structure for each task $k$: At the start of task $k$, a max-weight rule is used to choose $(T[k], R[k], Y[k]) \in Row(A[k])$; At the end of task $k$, an \emph{auxiliary variable} is updated to guide the system towards maximized time average reward. Care is taken to ensure the auxiliary variable varies slowly enough (at the timescale of the desired adaptation) so that maximizing the desired fractional objective can be accurately approximated by maximizing a nonfractional objective.

\subsection{Notation} 

This paper often refers to a  
collection of consecutive equations by the first and last equation number separated by a hyphen.  For example, 
``optimization problem \eqref{eq:p1}-\eqref{eq:p3}'' refers to the equations \eqref{eq:p1}, \eqref{eq:p2}, \eqref{eq:p3}.  For vectors $x, y \in \mathbb{R}^n$ we use inner product $x^{\top}y=\sum_{i=1}^n x_iy_i$ and Euclidean norm $\norm{x} = \sqrt{\sum_{i=1}^nx_i^2}$.

\section{Preliminaries}

\subsection{Boundedness assumptions} \label{bounded}

Assume there are nonnegative constants $t_{min}, t_{max}, r_{max}, c, y_{i,min}, y_{i,max}$ (with $t_{min}>0$)  
such that for all $k \in \{1, 2, 3, \ldots\}$, all  $A[k]$, and all possible choices of $(T[k], R[k], Y[k]) \in Row(A[k])$, the following boundedness assumptions surely hold: 
\begin{align}
&t_{min}\leq T[k]\leq t_{max} \label{eq:bound1} \\
&0\leq R[k] \leq r_{max} \label{eq:bound2}\\
&\norm{Y[k]}\leq c \label{eq:bound3}\\
&-y_{i,min}\leq Y_i[k]\leq y_{i,max} \quad \forall i \in \{1, \ldots, n\} \label{eq:bound4} 
\end{align}
where $\norm{Y[k]}$ is the Euclidean norm of $Y[k]=(Y_1[k], \ldots, Y_n[k])$.  
 
 Constraint \eqref{eq:bound2} assumes all rewards $R[k]$ are nonnegative. This is without loss of generality: If the system can have  negative rewards in some bounded interval  $-r_{min}\leq R[k]\leq r_{max}$, where $r_{min}$ is some given positive constant, we define a new \emph{nonnegative} reward 
 $$G[k] = R[k] + (r_{min}/t_{min})T[k]$$  
 The objective of maximizing $\overline{G}/\overline{T}$ is the same as the objective of maximizing $\overline{R}/\overline{T}$. The new reward  $G[k]$ satisfies:
$$0\leq G[k]\leq r_{max} +(r_{min}/t_{min})t_{max} \quad \forall k \in \{1, 2, 3, \ldots\}$$

\subsection{Stochastic assumptions} \label{section:stochastic} 

Fix $(\Omega, \script{F}, P)$ as the probability space.  
The probability space contains random matrices $\{A[k]\}_{k=1}^{\infty}$ and a random variable $U$ with the following structure:

\begin{itemize} 
\item  Assume $\{A[k]\}_{k=1}^{\infty}$ is a sequence of independent and identically distributed (i.i.d.) random matrices. Each matrix $A[k]$ has size $M[k]\times (n+2)$, where $M[k]$ is a random variable that takes positive integer values. Each of the rows $r \in \{1, \ldots, M[k]\}$ has the form \eqref{eq:row-form}. Given  $M[k]=m$, the random matrix $A[k]$ has size $m\times (n+2)$ and its entries are random variables that have an arbitrary joint distribution, with the only stipulation that all rows surely satisfy the boundedness assumptions \eqref{eq:bound1}-\eqref{eq:bound4}.

\item Assume there is a random variable $U:\Omega\rightarrow [0,1]$ that is uniformly distributed over $[0,1]$ and  independent of $\{A[k]\}_{k=1}^{\infty}$. The random variable $U$ can be used, if desired, as an independent source of randomness to facilitate potentially randomized row selection decisions.\footnote{Formally, a single $U \sim \mbox{Unif}[0,1]$ can be measurably mapped to an infinite sequence $\{U[k]\}_{k=1}^{\infty}$ of i.i.d. $\mbox{Unif}[0,1]$ random variables  that, if desired, can be accessed sequentially over tasks $k \in \{1, 2, 3, \ldots\}$.}
\end{itemize}

\subsection{The sets $\Gamma$ and $\overline{\Gamma}$} 

For each task $k\in\{1, 2, 3, \ldots\}$, define a \emph{decision vector} $(T[k], R[k], Y[k])$ to be a random vector that satisfies 
$$(T[k],R[k], Y[k]) \in Row(A[k])$$
Let $\Gamma\subseteq\mathbb{R}^{n+2}$ be the set of all expectations 
$\expect{(T[k], R[k],Y[k])}$ for a given task $k$, considering all possible decision vectors. 
The set $\Gamma$ considers all conditional probabilities for choosing a row given the observed $A[k]$. The $\{A[k]\}_{k=1}^{\infty}$ matrices are i.i.d. and so $\Gamma$ is the same for all $k\in\{1, 2, 3, \ldots\}$. It can be shown that $\Gamma$ is nonempty, bounded, and convex  (see \cite{sno-text}). Its closure $\overline{\Gamma}$ is compact and convex. 

For any sequence of decision vectors and for  $m \in \{1, 2, 3, \ldots\}$, define 
\begin{equation*} 
\expect{(\overline{T}[m], \overline{R}[m], \overline{Y}[m])} = \frac{1}{m}\sum_{k=1}^m\expect{(T[k], R[k], Y[k])}
\end{equation*}  
The right-hand-side is a convex combination of points in the convex set $\Gamma$ and so
\begin{equation} \label{eq:in-gamma} 
\expect{(\overline{T}[m], \overline{R}[m], \overline{Y}[m])}  \in \Gamma \quad \forall m \in \{1, 2, 3, \ldots\}
\end{equation} 
Define the \emph{history} up to task $m$ as
$$H[m] = (A[1], A[2], \ldots, A[m-1])$$ 
where $H[1]$ is defined to be the constant $0$. 
The following lemma collects results from \cite{sno-text}\cite{neely-renewal-jmlr}.

\begin{lem} \label{lem:1} Suppose $\{A[k]\}_{k=1}^{\infty}$ are i.i.d. and satisfy the boundedness assumptions \eqref{eq:bound1}-\eqref{eq:bound4}. Then

a) For every $(t,r,y) \in \Gamma$ and $k \in \{1, 2, 3, \ldots\}$, there exists a decision vector $(T^*[k], R^*[k], Y^*[k]) \in Row(A[k])$  that is independent of $H[k]$ and 
that satisfies (with probability 1):
$$ \expect{(T^*[k], R^*[k], Y^*[k])|H[k]} = (t,r,y)$$

b)  If $\{(T[k], R[k], Y[k])\}_{k=1}^{\infty}$ is a sequence of decision vectors from a causal decision policy that, for each $k$, makes the decision $(T[k], R[k], Y[k]) \in Row(A[k])$ as a measurable function of $\{U, A[1], A[2], \ldots, A[k]\}$, then the following sample path result holds:
\begin{equation} \label{eq:sample-path-dist} 
\lim_{m\rightarrow\infty} \mbox{dist}\left((\overline{T}[m], \overline{R}[m], \overline{Y}[m])  , \overline{\Gamma}\right) = 0 \quad \mbox{(with prob 1)} 
\end{equation} 
where $\mbox{dist}(y, \Gamma)$ denotes the Euclidean distance between a vector $y \in \mathbb{R}^{n+2}$ and the convex set $\overline{\Gamma} \subseteq\mathbb{R}^{n+2}$.  
\end{lem}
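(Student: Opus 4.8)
\emph{Proof approach.} Part (a) will follow almost directly from the definition of $\Gamma$ together with the independent randomization variable $U$. By definition, the point $(t,r,y)\in\Gamma$ is the expectation $\expect{(T[k],R[k],Y[k])}$ of some decision vector, i.e., it is realized by a conditional row-selection rule: a probability kernel $\pi(\cdot\mid A[k])$ over the (finitely many) rows of $A[k]$. The plan is to invoke the standard functional representation of such a kernel (elementary here, since each $A[k]$ has finitely many rows: partition $[0,1]$ into subintervals whose lengths are $\pi(1\mid a),\ldots,\pi(M[k]\mid a)$) to obtain a measurable map $\rho$ such that, given $A[k]=a$, the random row $\rho(a,U[k])$ has law $\pi(\cdot\mid a)$, where $U[k]$ is one of the i.i.d. $\mathrm{Unif}[0,1]$ variables extracted from $U$ (see the footnote in Section \ref{section:stochastic}). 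Setting $(T^*[k],R^*[k],Y^*[k])=\rho(A[k],U[k])$, the pair $(A[k],U[k])$ is independent of $H[k]=(A[1],\ldots,A[k-1])$ since $\{A[j]\}$ is i.i.d. and $U$ is independent of $\{A[j]\}$; hence the decision vector is independent of $H[k]$, and $\expect{(T^*[k],R^*[k],Y^*[k])\mid H[k]}=\expect{(T^*[k],R^*[k],Y^*[k])}=(t,r,y)$.

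For part (b), fix the filtration $\mathcal{F}_k=\sigma(U,A[1],\ldots,A[k])$, so that the causal decision vector $(T[k],R[k],Y[k])$ is $\mathcal{F}_k$-measurable. The first key step is to show that
$$g[k]\defequiv\expect{(T[k],R[k],Y[k])\mid \mathcal{F}_{k-1}}\in\Gamma \quad\text{(with prob 1)}.$$
Writing the decision as $\phi_k(U,A[1],\ldots,A[k])$ and using that $A[k]$ is independent of $\mathcal{F}_{k-1}$, I would take the explicit version $g[k]=\int \phi_k(U,A[1],\ldots,A[k-1],a)\,\mu(da)$, where $\mu$ is the common law of $A[k]$. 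This is $\mathcal{F}_{k-1}$-measurable, and for each realization of $\mathcal{F}_{k-1}$ the map $a\mapsto\phi_k(\,\cdot\,,a)$ is a (pure) row-selection rule, so its $\mu$-average is a point of $\Gamma$ by the very definition of $\Gamma$; since $\{A[k]\}$ is i.i.d., $\Gamma$ is the same for every $k$, so $g[k]\in\Gamma$.

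The second step is a martingale strong law. Define $Z[k]=(T[k],R[k],Y[k])-g[k]$; then $\{Z[k]\}$ is a vector-valued martingale difference sequence for $\{\mathcal{F}_k\}$, uniformly bounded by \eqref{eq:bound1}--\eqref{eq:bound4}. A standard martingale SLLN---either via $\sum_k \expect{\norm{Z[k]}^2}/k^2<\infty$ and Kronecker's lemma, or via Azuma--Hoeffding and Borel--Cantelli applied to each coordinate---yields $\frac{1}{m}\sum_{k=1}^m Z[k]\to 0$ almost surely. Then
$$(\overline{T}[m],\overline{R}[m],\overline{Y}[m])=\frac{1}{m}\sum_{k=1}^m g[k]+\frac{1}{m}\sum_{k=1}^m Z[k],$$
where the first sum is a convex combination of points of $\Gamma\subseteq\overline{\Gamma}$, hence lies in the convex set $\overline{\Gamma}$. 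Using the elementary estimate $\mathrm{dist}(u+w,\overline{\Gamma})\le \mathrm{dist}(u,\overline{\Gamma})+\norm{w}$ with $u=\frac1m\sum_{k=1}^m g[k]\in\overline{\Gamma}$ and $w=\frac1m\sum_{k=1}^m Z[k]$, we obtain $\mathrm{dist}\big((\overline{T}[m],\overline{R}[m],\overline{Y}[m]),\overline{\Gamma}\big)\le\norm{\frac1m\sum_{k=1}^m Z[k]}\to 0$ almost surely, which is \eqref{eq:sample-path-dist}.

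The main obstacle I anticipate is the first step of part (b): exhibiting a version of $\expect{(T[k],R[k],Y[k])\mid\mathcal{F}_{k-1}}$ that provably lies in $\Gamma$ (not merely its closure) for a.e.\ $\omega$, which requires care with the measurable dependence of the conditional row-selection rule on the past and with the Fubini-type interchange above. This is a standard fact in the renewal-systems literature, and I would either reproduce the argument sketched here or simply cite \cite{sno-text}\cite{neely-renewal-jmlr}; the martingale SLLN and the distance estimate are then routine. Part (a) is essentially a bookkeeping exercise with the definition of $\Gamma$ and the independence structure, so I expect no difficulty there.
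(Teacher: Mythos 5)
Your proposal is correct and follows essentially the same route as the paper's appendix: part (a) uses the independent uniform $U$ to implement the row-selection randomization realizing $(t,r,y)$ (independence of $(A[k],U[k])$ from $H[k]$ then gives the conditional-expectation identity), and part (b) decomposes the empirical average into a martingale-difference term $Z[k]=(T[k],R[k],Y[k])-\expect{(T[k],R[k],Y[k])\mid U,H[k]}$ (the paper's $W[k]$) plus a conditional-mean term lying in $\overline{\Gamma}$, applies a martingale/orthogonality SLLN to the former, and uses convexity of $\overline{\Gamma}$ plus the distance estimate to conclude. The only cosmetic deviation is that you claim the conditional mean lies in $\Gamma$ itself rather than $\overline{\Gamma}$ as the paper states; $\overline{\Gamma}$ suffices for the argument and sidesteps the measurability subtleties you flag, which is exactly how the paper handles it.
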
 

\begin{proof} 
The proof is similar to arguments in \cite{sno-text}\cite{neely-renewal-jmlr}. For completeness, the Appendix fills in minor details.  
\end{proof}

\subsection{The deterministic problem} \label{section:deterministic} 

Consider the following deterministic problem 
\begin{align}
\mbox{Maximize:} &\quad r/t \label{eq:det1}\\
\mbox{Subject to:} & \quad y_i\leq 0 \quad \forall i \in \{1, ..., n\}\label{eq:det2} \\
& \quad (t,r,y) \in \overline{\Gamma} \label{eq:det3}
\end{align}
where $\overline{\Gamma}$ is the closure of $\Gamma$.  Recall that $(t,r,y)\in\overline{\Gamma}$ implies $t\geq t_{min}>0$ and so there are no divide-by-zero issues. 
Using \eqref{eq:in-gamma} and arguments similar to those given in \cite{sno-text}, it can be shown that:  (i) 
The stochastic problem \eqref{eq:p1}-\eqref{eq:p3} is feasible if and only if the deterministic problem \eqref{eq:det1}-\eqref{eq:det3} is 
feasible; (ii) If feasible, the optimal objective values are the same \cite{sno-text}. Specifically, if $(t^*, r^*, y^*)$ solves \eqref{eq:det1}-\eqref{eq:det3} then
$$ \theta^* = r^*/t^*$$
where $\theta^*$ is the optimal objective for both the stochastic problem \eqref{eq:p1}-\eqref{eq:p3} and the deterministic problem \eqref{eq:det1}-\eqref{eq:det3}.

Assume the following \emph{Slater condition} holds: There is a value $s>0$ and a vector  
$(t^s, r^s, y^s) \in \overline{\Gamma}$ such that 
\begin{equation} \label{eq:Slater}
y_i^s \leq -s \quad \forall i \in \{1, \ldots, n\}
 \end{equation}  

\section{Algorithm} 

\subsection{Parameters and constants}

The algorithm uses parameters $v>0$, $\alpha>0$, $q=(q_1, \ldots, q_n)$ with $q_i\geq 0$ for all $i \in \{1, \ldots, n\}$, to be precisely determined later.  The constants $t_{min}, t_{max}, r_{max}, y_{i,min}, y_{i,max}, c$  from the boundedness assumptions \eqref{eq:bound1}-\eqref{eq:bound4} are assumed known. Define
\begin{align}
\gamma_{min} &= 1/t_{max} \label{eq:gamma-min} \\
\gamma_{max} &= 1/t_{min} \label{eq:gamma-max}
\end{align}
The algorithm introduces a sequence of auxiliary variables $\{\gamma[k]\}_{k=0}^{\infty}$ with initial condition $\gamma[0]=\gamma_{min}$, and with $\gamma[k]$ chosen in the interval $[\gamma_{min}, \gamma_{max}]$ for each task $k \in \{1, 2, 3, \ldots\}$. 

\subsection{Intuition} 

For intuition,  temporarily assume time averages converge to constants with probability 1. Define: 
\begin{align*}
\overline{Y}_i &= \lim_{m\rightarrow\infty}\frac{1}{m}\sum_{k=1}^mY_i[k]\\
\overline{1/\gamma} &= \lim_{m\rightarrow\infty}\frac{1}{m}\sum_{k=1}^m1/\gamma[k]
\end{align*}
The idea is to solve the following time averaged problem:
\begin{align}
\mbox{Maximize:} &\quad \lim_{m\rightarrow\infty}\frac{\frac{1}{m}\sum_{k=1}^m R[k]\gamma[k]/\gamma[k-1]}{\frac{1}{m}\sum_{k=1}^m 1/\gamma[k-1]} \label{eq:int1}\\
\mbox{Subject to:} &\quad \overline{Y}_i\leq 0 \quad \forall i \in \{1, \ldots, n\} \label{eq:int2} \\
&\quad \overline{T}\leq \overline{1/\gamma}\label{eq:int3} \\
&\quad \gamma[k] \in [\gamma_{min}, \gamma_{max}]\quad \forall k \label{eq:int4} \\
&\quad (T[k], R[k], Y[k]) \in Row(A[k]) \quad \forall k \label{eq:int5}\\
&\quad \mbox{$\gamma[k]$ varies ``slowly'' over $k$}  \label{eq:int6}
\end{align}
This is an informal description of our goals because the constraint ``$\gamma[k]$ varies slowly'' is not precise (also, the above problem assumes limits exist).  Intuitively, if $\gamma[k]$ does not change much from one task to the next, the above objective is close to $\overline{R}/\overline{1/\gamma}$, which (by the second constraint) is less than or equal to the desired objective $\overline{R}/\overline{T}$. This is useful because, as we show, the above problem can be  treated using a novel hierarchical optimization method. 

\subsection{Virtual queues} 

To enforce the constraints $\overline{Y}_i\leq 0$, for each $i \in \{1, \ldots, n\}$ define a process $Q_i[k]$ with initial condition $Q_i[1]=0$ and update equation 
\begin{equation} \label{eq:q-update}
Q_i[k+1] = [Q_i[k] + Y_i[k]]_0^{q_iv} \quad \forall k \in \{1, 2, 3, \ldots\}
\end{equation} 
where $v>0$ and $q=(q_1, \ldots, q_n)$ are given nonnegative parameters (to be precisely sized later), and 
where $[z]_0^{q_iv}$ denotes the projection of the real number $z$ onto the interval $[0, q_iv]$. Specifically
$$ [z]_0^{q_iv} = \left\{\begin{array}{cc}
q_iv & \mbox{ if $z>q_iv$} \\
z & \mbox{ if $z \in [0, q_iv]$} \\
0 & \mbox{ else} 
\end{array}\right.$$
To enforce the constraint $\overline{T}\leq \overline{1/\gamma}$, 
define a process $J[k]$ by 
\begin{equation} \label{eq:j-update} 
J[k+1] = [J[k] + T[k]-1/\gamma[k]]_0^{\infty}  \quad \forall k \in \{1, 2, 3, \ldots\}
\end{equation}
with initial condition $J[1]=0$. By construction, $Q_i[k]$ is nonnegative and upper-bounded by $q_iv$, while $J[k]$ is merely nonnegative. 
The processes $Q_i[k]$ and $J[k]$ shall be called \emph{virtual queues} because their update resembles a queueing system with arrivals and service for each $k$. Such virtual queues are standard for enforcing time average inequality constraints in stochastic systems (see \cite{sno-text}\cite{neely-energy-it}).

For each task $k$ and each $i\in\{1, \ldots, n\}$, define $1_i[k]$ as the following indicator 
function: 
\begin{equation} \label{eq:one-i} 
1_i[k]= \left\{\begin{array}{cc}
1 & \mbox{ if $Q_i[k]> q_iv-y_{i,max}$} \\
0 & \mbox{ else} 
\end{array}\right.
\end{equation}

\begin{lem}\label{lem:vq-bounds} Fix $k_0$ and $m$ as positive integers. Under the iterations \eqref{eq:q-update} and \eqref{eq:j-update}, we have for any decision vectors $\{(T[k], R[k], Y[k])\}_{k=1}^{\infty}$ 
and for all $i \in \{1, \ldots, n\}$: 
 \begin{align}
&\frac{1}{m} \sum_{k=k_0}^{k_0+m-1} (Y_i[k]-1_i[k]y_{i,max}) \leq \frac{q_iv}{m}\label{eq:vq1} \\
&\frac{1}{m}\sum_{k=k_0}^{k_0+m-1}(T[k]-1/\gamma[k]) \leq \frac{J[k_0+m]}{m}\label{eq:vq2}
 \end{align}
\end{lem}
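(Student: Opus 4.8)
The plan is to prove each inequality by a telescoping argument on the corresponding virtual queue, using the projection structure of the updates. Consider first \eqref{eq:vq2}, which is the cleaner of the two. From the update \eqref{eq:j-update}, the projection onto $[0,\infty]$ only increases the value, so $J[k+1] \geq J[k] + T[k] - 1/\gamma[k]$ for every $k$. Rearranging gives $T[k] - 1/\gamma[k] \leq J[k+1] - J[k]$. Summing this over $k \in \{k_0, \ldots, k_0+m-1\}$, the right-hand side telescopes to $J[k_0+m] - J[k_0]$. Since $J[k_0] \geq 0$ by construction (the queue is always nonnegative), we get $\sum_{k=k_0}^{k_0+m-1}(T[k]-1/\gamma[k]) \leq J[k_0+m]$, and dividing by $m$ yields \eqref{eq:vq2}.

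For \eqref{eq:vq1} the same telescoping idea works but the two-sided projection onto $[0,q_iv]$ requires more care, and this is the step I expect to be the main obstacle. The lower projection still gives $Q_i[k+1] \geq Q_i[k] + Y_i[k]$ whenever the unprojected value does not exceed $q_iv$; but when $Q_i[k] + Y_i[k] > q_iv$ the update caps the queue at $q_iv$, so the naive bound $Y_i[k] \leq Q_i[k+1] - Q_i[k]$ can fail. The fix is to observe, using the bound $Y_i[k] \leq y_{i,max}$ from \eqref{eq:bound4}, that capping can occur at step $k$ only if $Q_i[k] > q_iv - y_{i,max}$, i.e.\ only when the indicator $1_i[k]$ defined in \eqref{eq:one-i} equals $1$. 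Hence in all cases
\begin{equation*}
Y_i[k] - 1_i[k]\,y_{i,max} \leq Q_i[k+1] - Q_i[k]:
\end{equation*}
when $1_i[k]=0$ no capping happened and $Y_i[k] \leq Q_i[k+1]-Q_i[k]$; when $1_i[k]=1$ we simply use $Y_i[k] - y_{i,max} \leq 0 \leq Q_i[k+1]-Q_i[k]$ (the latter because $Q_i[k+1] = q_iv \geq Q_i[k]$ in the capping case, and $\geq 0$ otherwise, but in fact one checks $Q_i[k+1] \geq Q_i[k]$ is not needed — it suffices that the left side is $\le 0 \le q_iv - Q_i[k+1]+Q_i[k]$ after one more elementary case split, or more simply bound $Q_i[k+1]-Q_i[k] \geq -Q_i[k] \geq -q_iv$; I will arrange the cases so the inequality is transparent).

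Summing the displayed inequality over $k \in \{k_0,\ldots,k_0+m-1\}$ telescopes the right-hand side to $Q_i[k_0+m] - Q_i[k_0] \leq q_iv - 0 = q_iv$, where we used $0 \leq Q_i[k] \leq q_iv$ for all $k$, which is immediate from the projection in \eqref{eq:q-update}. Dividing by $m$ gives \eqref{eq:vq1}. Note the statement asserts nothing about the decision policy generating the decision vectors — the bounds are purely pathwise consequences of the queue recursions and the boundedness assumption $Y_i[k] \leq y_{i,max}$ — so no probabilistic argument or appeal to Lemma~\ref{lem:1} is needed. The only genuine subtlety, as noted, is the bookkeeping around the upper projection; once the indicator $1_i[k]$ is brought in exactly to absorb the capping loss, everything reduces to routine telescoping.
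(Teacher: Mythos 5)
The overall strategy is exactly the paper's: establish the pointwise inequality
\[
Y_i[k]-1_i[k]y_{i,max}\ \leq\ Q_i[k+1]-Q_i[k],
\]
telescope, and use $0\leq Q_i[k]\leq q_iv$; your treatment of \eqref{eq:vq2} is also identical to the paper's and is correct as written.

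For \eqref{eq:vq1}, however, the case analysis you give to justify the displayed inequality has a flaw. You split on $1_i[k]\in\{0,1\}$ and, in the case $1_i[k]=1$, assert $Y_i[k]-y_{i,max}\leq 0 \leq Q_i[k+1]-Q_i[k]$. The second inequality is false in general: the indicator $1_i[k]=1$ only says $Q_i[k]>q_iv-y_{i,max}$, which does not force the upper projection to fire. If, say, $Q_i[k]=q_iv-y_{i,max}/2$ and $Y_i[k]<0$, then $1_i[k]=1$ yet $Q_i[k+1]=[Q_i[k]+Y_i[k]]_0^{q_iv}<Q_i[k]$. You acknowledge this parenthetically, but neither of the suggested repairs closes the gap: $q_iv-Q_i[k+1]+Q_i[k]\geq 0$ only yields the upper bound $Q_i[k+1]-Q_i[k]\leq q_iv$, which is the wrong direction, and the crude lower bound $Q_i[k+1]-Q_i[k]\geq -q_iv$ would require $Y_i[k]-y_{i,max}\leq -q_iv$, which has no reason to hold (indeed it fails once $q_iv$ is large). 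The correct and cleaner split, which the paper uses, is on whether the upper projection fires, i.e.\ on whether $Q_i[k]+Y_i[k]>q_iv$. In that case $Q_i[k+1]=q_iv$, so the displayed inequality reduces to $Y_i[k]-y_{i,max}\leq q_iv-Q_i[k]$, which holds since the left side is nonpositive and the right side nonnegative; moreover $Y_i[k]\leq y_{i,max}$ then forces $Q_i[k]>q_iv-y_{i,max}$, so $1_i[k]=1$ automatically. In the complementary case $Q_i[k]+Y_i[k]\leq q_iv$, the projection gives $Q_i[k+1]\geq Q_i[k]+Y_i[k]$ directly (the lower cap only raises the value), hence $Y_i[k]\leq Q_i[k+1]-Q_i[k]$ and the extra $-1_i[k]y_{i,max}\leq 0$ is harmless. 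With this split the rest of your argument goes through verbatim.
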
 
\begin{proof} 
Fix $i \in \{1, \ldots, n\}$ and $k \in \{1, 2, 3, \ldots\}$.  We first claim
\begin{equation} \label{eq:first-claim} 
Y_i[k] - 1_i[k]y_{i,max} \leq Q_i[k+1] - Q_i[k] 
\end{equation} 
To verify \eqref{eq:first-claim}, consider the two cases: 
\begin{itemize} 
\item Case 1: Suppose $Q_i[k]+Y_i[k]>q_iv$.  It follows by \eqref{eq:q-update} that $Q_i[k+1]=q_iv$. Since $Y_i[k]\leq y_{i,max}$, we have $Q_i[k]>q_iv-y_{i,max}$ and so $1_i[k]=1$.  It follows that \eqref{eq:first-claim} reduces to $Y_i[k]-y_{i,max}\leq q_iv -Q_i[k]$, which is true because the left-hand-side is always nonpositive while the right-hand-side is always nonnegative. 
\item Case 2: Suppose $Q_i[k]+Y_i[k]\leq q_iv$. The update \eqref{eq:q-update} then gives $Q_i[k+1]\geq Q_i[k]+Y_i[k]$, so \eqref{eq:first-claim} again holds (recall $y_{i,max}\geq 0$). 
\end{itemize} 
Summing \eqref{eq:first-claim} over $k \in \{k_0, \ldots, k_0+m-1\}$ gives
\begin{align*} 
\sum_{k=k_0}^{k_0+m-1}(Y_i[k]-1_i[k]y_{i,max}) 
&\leq Q_i[k_0+m]-Q_i[k_0]\\
&\leq q_iv
\end{align*}
where the final equality holds because the update  \eqref{eq:q-update} ensures $Q_i[k] \in [0, q_iv]$ for all $k$. 
Dividing by $m$ proves \eqref{eq:vq1}. 

To prove \eqref{eq:vq2}, observe the update \eqref{eq:j-update} implies 
\begin{equation} \label{eq:jt-bound}
J[k+1] \geq J[k] + T[k]-1/\gamma[k] \quad \forall k \in \{1, 2, 3, \ldots\}
\end{equation} 
Summing over $k \in \{k_0, \ldots, k_0+m-1\}$ gives 
$$ J[k_0+m]-J[k_0] \geq \sum_{k=k_0}^{k_0+m-1}(T[k]-1/\gamma[k])$$
The result follows by dividing by $m$ and observing $J[k_0]\geq 0$. 
\end{proof}

The lemma shows the following: To ensure the desired time average inequality constraints are close to being satisfied over a sequence of $m$ consecutive tasks, decisions should be made to keep $J[k]$ bounded (so the right-hand-side of \eqref{eq:vq2} vanishes as $m$ gets large) and to ensure $Y_i[k]$ rarely crosses the $q_iv - y_{i,max}$ threshold (so  $1_i[k]$ on the left-hand-side of \eqref{eq:vq1} is rarely nonzero).

\subsection{Lyapunov drift} 

Define $Q[k]=(Q_1[k], \ldots, Q_n[k])$. 
Define 
$$L[k]=\frac{1}{2}J[k]^2 + \frac{1}{2}\norm{Q[k]}^2$$
where $\norm{Q[k]}^2 = \sum_{i=1}^nQ_i[k]^2$. The process $L[k]$ can be viewed as a Lyapunov function on the queue state for task $k$. Define 
$$\Delta[k]=L[k+1]-L[k]$$

\begin{lem} Under the iterations \eqref{eq:q-update} and \eqref{eq:j-update}, we have for any decision vectors 
$\{(T[k], R[k], Y[k])\}_{k=1}^{\infty}$ and for all positive integers $k$
\begin{equation} \label{eq:Delta}
\Delta[k] \leq b + J[k](T[k]-1/\gamma[k]) + Q[k]^{\top}Y[k]
 \end{equation}
 where $b$ is a constant defined by 
$$ b = \frac{1}{2}\left[c^2 +(t_{max}-t_{min})^2\right] $$
with $c$ given in \eqref{eq:bound3}, and 
where $Q[k]^{\top}Y[k]=\sum_{i=1}^kQ_i[k]Y_i[k]$.
\end{lem}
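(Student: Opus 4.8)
The plan is the standard ``squared virtual queue'' drift computation: bound the increment of $\tfrac12 J[k]^2$ and of $\tfrac12\norm{Q[k]}^2$ separately using the non-expansiveness of the projections in the updates \eqref{eq:q-update} and \eqref{eq:j-update}, then add the two bounds and collect constants into $b$.

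First I would treat the $J$-term. Since \eqref{eq:j-update} gives $J[k+1]=[J[k]+T[k]-1/\gamma[k]]_0^{\infty}=\max\{0,\,J[k]+T[k]-1/\gamma[k]\}$, and since $(\max\{0,z\})^2\le z^2$ for every real $z$, we get
$J[k+1]^2 \le (J[k]+T[k]-1/\gamma[k])^2 = J[k]^2 + 2J[k](T[k]-1/\gamma[k]) + (T[k]-1/\gamma[k])^2$.
The key observation is that $\gamma[k]\in[\gamma_{min},\gamma_{max}]=[1/t_{max},1/t_{min}]$ (by \eqref{eq:gamma-min}, \eqref{eq:gamma-max}) forces $1/\gamma[k]\in[t_{min},t_{max}]$; together with $T[k]\in[t_{min},t_{max}]$ from \eqref{eq:bound1} this yields $|T[k]-1/\gamma[k]|\le t_{max}-t_{min}$, hence $(T[k]-1/\gamma[k])^2\le (t_{max}-t_{min})^2$. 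So $\tfrac12 J[k+1]^2-\tfrac12 J[k]^2 \le J[k](T[k]-1/\gamma[k]) + \tfrac12(t_{max}-t_{min})^2$.

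Next I would treat the $Q$-term coordinatewise. For each $i$, the map $z\mapsto [z]_0^{q_iv}$ is the Euclidean projection onto the interval $[0,q_iv]$, which contains $0$; since $q_iv\ge 0$, a short case check ($z>q_iv$, $z\in[0,q_iv]$, $z<0$) shows $([z]_0^{q_iv})^2\le z^2$ for all real $z$. Applying this with $z=Q_i[k]+Y_i[k]$ gives $Q_i[k+1]^2\le Q_i[k]^2+2Q_i[k]Y_i[k]+Y_i[k]^2$. Summing over $i\in\{1,\dots,n\}$ and using $\norm{Y[k]}\le c$ from \eqref{eq:bound3} gives $\norm{Q[k+1]}^2\le \norm{Q[k]}^2+2Q[k]^\top Y[k]+c^2$, i.e. $\tfrac12\norm{Q[k+1]}^2-\tfrac12\norm{Q[k]}^2\le Q[k]^\top Y[k]+\tfrac12 c^2$.

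Adding the two displayed inequalities yields $\Delta[k]=\tfrac12(J[k+1]^2-J[k]^2)+\tfrac12(\norm{Q[k+1]}^2-\norm{Q[k]}^2)\le J[k](T[k]-1/\gamma[k])+Q[k]^\top Y[k]+\tfrac12\big[(t_{max}-t_{min})^2+c^2\big]$, which is exactly \eqref{eq:Delta} with $b=\tfrac12[c^2+(t_{max}-t_{min})^2]$. I do not expect any real obstacle: the whole argument is routine algebra, and the only two points needing a line of care are (i) converting $\gamma[k]\in[\gamma_{min},\gamma_{max}]$ into $|T[k]-1/\gamma[k]|\le t_{max}-t_{min}$, and (ii) verifying that the two projections are non-expansive toward $0$ so that squaring after projection can only shrink the value.
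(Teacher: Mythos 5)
Your proof is correct and follows essentially the same route as the paper's: square each projected update using $([z]_a^b)^2\le z^2$ when $0\in[a,b]$, bound $(T[k]-1/\gamma[k])^2$ by $(t_{max}-t_{min})^2$ and $\norm{Y[k]}^2$ by $c^2$, and add the two resulting drift bounds. There is no substantive difference.
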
 

\begin{proof} 
Fix $k \in \{1, 2, 3, \ldots\}$. Squaring \eqref{eq:q-update} and using $([z]_0^{q_iv})^2\leq z^2$ for all $z \in \mathbb{R}$ gives 
\begin{align*}
Q_i[k+1]^2 &\leq (Q_i[k]+Y_i[k])^2\\
&=Q_i[k]^2 + Y_i[k]^2 + 2Q_i[k]Y_i[k]
\end{align*}
Summing over $i \in \{1, \ldots, n\}$ and dividing by 2 gives 
\begin{align} 
\frac{1}{2}\norm{Q[k+1]}^2&\leq \frac{1}{2}\norm{Q[k]}^2 + \frac{1}{2}\norm{Y[k]}^2 + Q[k]^{\top}Y[k]\nonumber\\
&\leq \frac{1}{2}\norm{Q[k]}^2 + \frac{1}{2}c^2 + Q[k]^{\top}Y[k] \label{eq:LDa}
\end{align}
where we have used the boundedness assumption \eqref{eq:bound3}.
Similarly, squaring \eqref{eq:j-update} and using $([z]_0^{\infty})^2\leq z^2$ for all $z \in \mathbb{R}$ gives  
\begin{align}
\frac{1}{2}J[k+1]^2 \leq \frac{1}{2}J[k]^2 + \frac{1}{2}(T[k]-1/\gamma[k])^2 + J[k](T[k]-1/\gamma[k])\label{eq:LDb}
\end{align}
Summing \eqref{eq:LDa} and \eqref{eq:LDb} gives
$$ \Delta[k] \leq \frac{1}{2}[c^2 + (T[k]-1/\gamma[k])^2] + J[k](T[k]-1/\gamma[k])+Q[k]^{\top}Y[k]$$
The result follows by observing that 
$$(T[k]-1/\gamma[k])^2 \leq (t_{max}-t_{min})^2$$
which holds by the boundedness assumption \eqref{eq:bound1} and the fact $1/\gamma[k]\in [t_{min}, t_{max}]$.
\end{proof} 

\subsection{Discussion} 

The above lemma implies that 
\begin{equation} \label{eq:DPP-explain} 
\Delta[k] - vR[k] \leq b -vR[k] +  J[k](T[k]-1/\gamma[k]) + Q[k]^{\top}Y[k]
\end{equation} 
The expression $\Delta[k]-vR[k]$ is called the \emph{drift plus penalty expression} because $\Delta[k]$ is associated with the drift of the Lyapunov function $L[k]$, and $-vR[k]$ is a weighted version of the reward for task $k$ (multiplied by $-1$ to turn a reward into a penalty). 
As shown in \cite{sno-text}, algorithms that minimize the right-hand-side of similar drift-plus-penalty expressions can treat stochastic problems that seek to minimize the time average of an objective function subject to time average inequality constraints.  This could be used to treat the problem 
\eqref{eq:int1}-\eqref{eq:int6} if the objective \eqref{eq:int1} were changed to minimizing $-\overline{R}$ and if the (ambiguous) constraint \eqref{eq:int6} were removed. 

We cannot use the drift-plus-penalty method for problem \eqref{eq:int1}-\eqref{eq:int6} because the objective is a ratio of averages, rather than a single average. For this, we design a novel hierarchical version of the drift-plus-penalty method that, for each task $k$ does: 
\begin{itemize} 
\item Step 1: Choose $(T[k], R[k], Y[k]) \in Row(A[k])$ to greedily minimize the right-hand-side of \eqref{eq:DPP-explain} (ignoring the term of this right-hand-side that depends on $\gamma[k]$); 
\item Step 2: Treating $\gamma[k-1], T[k], R[k], Y[k]$ as known constants, choose $\gamma[k] \in [\gamma_{min}, \gamma_{max}]$ to minimize 
$$ \underbrace{\frac{\gamma[k]}{\gamma[k-1]}(-vR[k] + J[k](T[k]-1/\gamma[k])+ Q[k]^{\top}Y[k])}_{\mbox{for \eqref{eq:int1}}} + \underbrace{\frac{\alpha v^2}{2}(\gamma[k]-\gamma[k-1])^2}_{\mbox{for \eqref{eq:int6}}}$$
\end{itemize} 
where the term in the first underbrace relates to the desired objective \eqref{eq:int1} and arises by multiplying both sides of \eqref{eq:DPP-explain} by $\gamma[k]/\gamma[k-1]$; the term in the second underbrace is a weighted ``prox-type'' term that, for our purposes,  acts only to enforce constraint \eqref{eq:int6}.

\subsection{Algorithm} 

Fix parameters $v>0, \alpha>0, q=(q_1, \ldots, q_n)$ with $q_i\geq 0$ for $i \in \{1, \ldots, n\}$ (to be sized later). 
Fix $\gamma[0]=\gamma_{min}$. For each task $k\in\{1, 2, 3, \ldots\}$, the algorithm proceeds as follows: 

\begin{itemize} 
\item Row selection: Observe $Q[k], J[k], A[k]$ and treat these as given constants. Choose $(T[k], R[k], Y[k]) \in Row(A[k])$ to minimize
$$ -vR[k] + J[k]T[k] + Q[k]^{\top}Y[k]$$
In the case of ties, break the tie in favor of the smallest indexed row. 
\item $\gamma[k]$ selection: Observe $Q[k], J[k], \gamma[k-1]$, and the decisions $(T[k], R[k], Y[k])$ just made by the row selection, and treat these as given constants. Choose $\gamma[k]\in [\gamma_{min}, \gamma_{max}]$ to minimize
$$ \gamma[k]\left[-vR[k] + J[k]T[k]+Q[k]^{\top}Y[k]\right] + \frac{\gamma[k-1]\alpha v^2}{2}(\gamma[k]-\gamma[k-1])^2$$
The explicit solution to this quadratic minimization is: 
\begin{equation} \label{eq:gamma}
\gamma[k]= \left[\gamma[k-1] + \frac{vR[k]-J[k]T[k]-Q[k]^{\top}Y[k]}{\gamma[k-1]\alpha v^2}\right]_{\gamma_{min}}^{\gamma_{max}}
\end{equation} 
where$[z]_{\gamma_{min}}^{\gamma_{max}}$ denotes the projection of $z\in\mathbb{R}$ onto the interval $[\gamma_{min}, \gamma_{max}]$. 
\item Virtual queue updates: Update the virtual queues via \eqref{eq:q-update} and \eqref{eq:j-update}.
\end{itemize} 

\subsection{Basic analysis} 

Fix $k \in \{1, 2, 3, \ldots\}$. The row selection decision of our algorithm implies 
\begin{equation} \label{eq:analysis1} 
-vR[k]+J[k]T[k]+Q[k]^{\top}Y[k]\leq -vR^*[k]+J[k]T^*[k]+Q[k]^{\top}Y^*[k]
\end{equation} 
where $(T^*[k], R^*[k], Y^*[k])$ is any other vector in $Row(A[k])$ (including any decision vector for task $k$ that is chosen according to some optimized probability distribution).

The $\gamma[k]$ selection decision of our algorithm chooses $\gamma[k] \in [\gamma_{min}, \gamma_{max}]$ to minimize a function of $\gamma[k]$
that is $\beta$-strongly convex for parameter $\beta=\gamma[k-1]\alpha v^2$.  Therefore, by standard \emph{strongly convex pushback} results (see, for example, Lemma 2.1 in \cite{SGD-robust}, \cite{tseng-accelerated}, Lemma 6 in \cite{neely-convex-chapter}), the following holds for any other $\gamma^*\in [\gamma_{min}, \gamma_{max}]$: 
\begin{align*}
&\gamma[k]\left[-vR[k] + J[k]T[k]+Q[k]^{\top}Y[k]\right] + \frac{\gamma[k-1]\alpha v^2}{2}(\gamma[k]-\gamma[k-1])^2\\
&\leq \gamma^*\left[-vR[k] + J[k]T[k]+Q[k]^{\top}Y[k]\right] + \frac{\gamma[k-1]\alpha v^2}{2}(\gamma^*-\gamma[k-1])^2- \underbrace{\frac{\gamma[k-1]\alpha v^2}{2}(\gamma^*-\gamma[k])^2}\\
&\leq \gamma^*\left[-vR^*[k] + J[k]T^*[k]+Q[k]^{\top}Y[k]^*\right] +  \frac{\gamma[k-1]\alpha v^2}{2}(\gamma^*-\gamma[k-1])^2- \frac{\gamma[k-1]\alpha v^2}{2}(\gamma^*-\gamma[k])^2
\end{align*}
where the first inequality gives an underbrace to highlight the pushback term that arises from strong convexity; the second inequality holds by  \eqref{eq:analysis1} and the fact $\gamma^*>0$.  

Dividing the above inequality by $\gamma[k-1]>0$ gives
\begin{align}
&\frac{\gamma[k]}{\gamma[k-1]}[-vR[k]+J[k]T[k]+Q[k]^{\top}Y[k]] + \frac{\alpha v^2}{2}(\gamma[k]-\gamma[k-1])^2 \nonumber\\
&\leq \frac{\gamma^*}{\gamma[k-1]}[-vR^*[k]+J[k]T^*[k]+Q[k]^{\top}Y^*[k]] +  \frac{\alpha v^2}{2}(\gamma^*-\gamma[k-1])^2 - \frac{\alpha v^2}{2}(\gamma^*-\gamma[k])^2 \label{eq:major} 
\end{align}

The following lemma is obtained by mere arithmetic rearrangements of the inequality \eqref{eq:major}.

\

\begin{lem} For any sample path of $\{A[k]\}_{k=1}^{\infty}$, for each $k \in \{1, 2, 3, \ldots\}$ 
our algorithm yields a drift-plus-penalty expression $\Delta[k]-vR[k]$ that surely satisfies 
\begin{align}
\Delta[k] - vR[k] &\leq b + \frac{\gamma^*}{\gamma[k-1]}[-vR^*[k]+J[k](T^*[k]-1/\gamma^*) + Q[k]^{\top}Y^*[k]]\nonumber\\
&\quad  + \frac{\alpha v^2}{2}(\gamma^*-\gamma[k-1])^2 - \frac{\alpha v^2}{2}(\gamma^*-\gamma[k])^2\nonumber\\
&\quad + \frac{(vr_{max} + cv\norm{q} + (t_{max}-t_{min})|J[k]|)^2}{2\gamma_{min}^2\alpha v^2} \label{eq:goalRHS3}
\end{align}
where $\Delta[k]-vR[k], \gamma[k], \gamma[k-1]$ refer to the actual values that arise in our algorithm;  
$(T^*[k], R^*[k], Y^*[k]) \in Row(A[k])$ is \emph{any} decision vector for task $k$ (not necessarily the decision vector chosen by our algorithm); $\gamma^*$ is \emph{any} real number in the interval $[\gamma_{min}, \gamma_{max}]$.
\end{lem}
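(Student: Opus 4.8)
The plan is to treat everything as deterministic arithmetic along the fixed sample path of $\{A[k]\}$ and to combine the drift bound \eqref{eq:Delta} with the inequality \eqref{eq:major}. Write $D[k] \defequiv -vR[k]+J[k]T[k]+Q[k]^{\transpose}Y[k]$ (the quantity the row-selection step minimizes) and $D^*[k]\defequiv -vR^*[k]+J[k]T^*[k]+Q[k]^{\transpose}Y^*[k]$. The first observation is that \eqref{eq:Delta} is exactly the statement
\[
\Delta[k]-vR[k]\le b + D[k] - J[k]/\gamma[k],
\]
since $J[k](T[k]-1/\gamma[k]) = J[k]T[k]-J[k]/\gamma[k]$. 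So the task is to replace $D[k]-J[k]/\gamma[k]$ by the right-hand side of \eqref{eq:goalRHS3}.

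Next I would split $D[k] = \tfrac{\gamma[k]}{\gamma[k-1]}D[k] + \bigl(1-\tfrac{\gamma[k]}{\gamma[k-1]}\bigr)D[k]$ and bound the first summand by \eqref{eq:major}, which (after moving $\tfrac{\alpha v^2}{2}(\gamma[k]-\gamma[k-1])^2$ to the right) gives
\[
\tfrac{\gamma[k]}{\gamma[k-1]}D[k] \le \tfrac{\gamma^*}{\gamma[k-1]}D^*[k] + \tfrac{\alpha v^2}{2}(\gamma^*-\gamma[k-1])^2 - \tfrac{\alpha v^2}{2}(\gamma^*-\gamma[k])^2 - \tfrac{\alpha v^2}{2}(\gamma[k]-\gamma[k-1])^2 .
\]
Then I would expand $\tfrac{\gamma^*}{\gamma[k-1]}D^*[k] = \tfrac{\gamma^*}{\gamma[k-1]}\bigl[-vR^*[k]+J[k](T^*[k]-1/\gamma^*)+Q[k]^{\transpose}Y^*[k]\bigr] + J[k]/\gamma[k-1]$. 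Collecting terms, this already produces the first two lines of \eqref{eq:goalRHS3}, and what remains is a ``residual''
\[
\frac{J[k]}{\gamma[k-1]} - \frac{J[k]}{\gamma[k]} - \frac{\alpha v^2}{2}(\gamma[k]-\gamma[k-1])^2 + \Bigl(1-\frac{\gamma[k]}{\gamma[k-1]}\Bigr)D[k],
\]
which must be shown to be at most the last line of \eqref{eq:goalRHS3}.

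The crux is bounding this residual. Setting $\delta \defequiv \gamma[k]-\gamma[k-1]$, I would use $\tfrac{J[k]}{\gamma[k-1]}-\tfrac{J[k]}{\gamma[k]} = \tfrac{\delta}{\gamma[k-1]}\cdot\tfrac{J[k]}{\gamma[k]}$ and $1-\tfrac{\gamma[k]}{\gamma[k-1]} = -\tfrac{\delta}{\gamma[k-1]}$, so the $\delta$-linear part collapses to $\tfrac{\delta}{\gamma[k-1]}\bigl(vR[k]-J[k](T[k]-1/\gamma[k])-Q[k]^{\transpose}Y[k]\bigr)$. Using $\gamma[k-1]\ge\gamma_{min}$, the bound $0\le R[k]\le r_{max}$ from \eqref{eq:bound2}, $\norm{Q[k]}\le v\norm{q}$ (since $Q_i[k]\in[0,q_iv]$ by \eqref{eq:q-update}), $\norm{Y[k]}\le c$ from \eqref{eq:bound3} together with Cauchy--Schwarz, and $|T[k]-1/\gamma[k]|\le t_{max}-t_{min}$ (both lie in $[t_{min},t_{max}]$), the residual is at most $\tfrac{|\delta|}{\gamma_{min}}E - \tfrac{\alpha v^2}{2}\delta^2$ with $E \defequiv vr_{max}+cv\norm{q}+(t_{max}-t_{min})|J[k]|$. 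Maximizing the concave quadratic $x\mapsto \tfrac{x}{\gamma_{min}}E - \tfrac{\alpha v^2}{2}x^2$ over $x\ge0$ (maximizer $x=E/(\gamma_{min}\alpha v^2)$) yields exactly $\tfrac{E^2}{2\gamma_{min}^2\alpha v^2}$, the last term of \eqref{eq:goalRHS3}.

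I expect the only real obstacle to be bookkeeping: cleanly isolating the residual and spotting the factorization that pulls out $\delta/\gamma[k-1]$ so that $J[k]/\gamma[k]$ recombines with $J[k]T[k]$ into $J[k](T[k]-1/\gamma[k])$. After that step, everything reduces to Cauchy--Schwarz, the boundedness assumptions \eqref{eq:bound1}--\eqref{eq:bound3}, and the maximization of a one-dimensional concave quadratic, consistent with the remark that the lemma follows by ``mere arithmetic rearrangements'' of \eqref{eq:major}.
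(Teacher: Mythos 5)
Your proposal is correct and follows essentially the same route as the paper: combine the drift bound \eqref{eq:Delta} with the pushback inequality \eqref{eq:major}, isolate a cross term linear in $\gamma[k]-\gamma[k-1]$, and absorb it into the $-\tfrac{\alpha v^2}{2}(\gamma[k]-\gamma[k-1])^2$ term via the same one-dimensional quadratic bound (the paper writes it as the completed-square inequality $yx+\tfrac{\alpha v^2}{2}x^2\ge -\tfrac{y^2}{2\alpha v^2}$, which is identical to your concave-quadratic maximization), finishing with Cauchy--Schwarz, the boundedness assumptions, and $\gamma[k-1]\ge\gamma_{min}$. The only cosmetic difference is organizational: the paper adds $-J[k]/\gamma[k-1]$ to both sides of \eqref{eq:major} and expands $\gamma[k]/\gamma[k-1]=1+\delta/\gamma[k-1]$, whereas you start from \eqref{eq:Delta} and split $D[k]$ into a $\gamma[k]/\gamma[k-1]$ piece plus a remainder; the residual you isolate is the same quantity up to sign.
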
 
\begin{proof} 
Adding $-J[k]/\gamma[k-1]$ to both sides of \eqref{eq:major} gives  
\begin{align}
LHS[k] &:=\frac{\gamma[k]}{\gamma[k-1]}[-vR[k]+J[k](T[k]-1/\gamma[k])+Q[k]^{\top}Y[k]] + \frac{\alpha v^2}{2}(\gamma[k]-\gamma[k-1])^2 \nonumber \\
&\leq \frac{\gamma^*}{\gamma[k-1]}[-vR^*[k]+J[k](T^*[k]-1/\gamma^*)+Q[k]^{\top}Y^*[k]] +  \frac{\alpha v^2}{2}(\gamma^*-\gamma[k-1])^2 - \frac{\alpha v^2}{2}(\gamma^*-\gamma[k])^2 \label{eq:parta} 
\end{align}
where, for simplicity of the arithmetic, we have defined $LHS[k]$ according to the \emph{Left-Hand-Side} of the above inequality.  
By rearranging terms in the definition of $LHS[k]$ we have 
\begin{align} 
LHS[k] &= -vR[k] + J[k](T[k]-1/\gamma[k]) + Q[k]^{\top}Y[k]\nonumber \\
&\quad+ \frac{(\gamma[k]-\gamma[k-1])}{\gamma[k-1]}[-vR[k]+J[k](T[k]-1/\gamma[k])+Q[k]^{\top}Y[k]]\nonumber\\ 
&\quad +  \frac{\alpha v^2}{2}(\gamma[k]-\gamma[k-1])^2\nonumber\\
&\geq \Delta[k] -vR[k] - b   + \frac{(\gamma[k]-\gamma[k-1])}{\gamma[k-1]}[-vR[k]+J[k](T[k]-1/\gamma[k])+Q[k]^{\top}Y[k]]\nonumber\\ 
&\quad +  \frac{\alpha v^2}{2}(\gamma[k]-\gamma[k-1])^2\nonumber\\
&\geq \Delta[k] - vR[k]-b - \frac{\left(\frac{-vR[k]+J[k](T[k]-1/\gamma[k])+Q[k]^{\top}Y[k]}{\gamma[k-1]} \right)^2}{2\alpha v^2} \label{eq:bazx} 
\end{align} 
where the first inequality holds by \eqref{eq:Delta}; the final inequality holds by the fact that for all real numbers $x, y$: 
$$ yx + \frac{\alpha v^2}{2}x^2 \geq -\frac{y^2}{2\alpha v^2}$$
which holds by completing the square (in this case we use $x=\gamma[k]-\gamma[k-1]$).  Now observe that 
\begin{align*}
&|-vR[k]| \leq vr_{max}\\
&|J[k](T[k]-1/\gamma[k])| \leq (t_{max}-t_{min})|J[k]|\\
&|Q[k]^{\top}Y[k]| \leq \norm{Q[k]}\cdot \norm{Y[k]} \leq cv\norm{q}
\end{align*}
where the final inequality uses Cauchy-Schwarz, the boundedness assumption \eqref{eq:bound3}, and the fact $0\leq Q_i[k]\leq q_iv$ for all $i$. Substituting these bounds into the right-hand-side of \eqref{eq:bazx} gives 
$$ LHS[k]\geq \Delta[k] - vR[k]-b - \frac{(vr_{max} +cv \norm{q} + (t_{max}-t_{min})|J[k]|)^2}{2\gamma[k-1]^2\alpha v^2}$$
Substituting this into \eqref{eq:parta} proves the result. 
\end{proof}

\subsection{Expected drift-plus-penalty} 

Recall that $H[k]=(A[1], \ldots, A[k-1])$ and note that $H[k]$ determines $Q[k]$ and $J[k]$, that is, $(Q[k],J[k])$ is $H[k]$-measurable.

\begin{lem} Suppose the problem \eqref{eq:det1}-\eqref{eq:det3} is feasible with optimal solution $(t^*, r^*, y^*) \in \overline{\Gamma}$ and optimal objective value $\theta^*=r^*/t^*$. Then for 
all $k\in\{1, 2, 3, \ldots\}$ we have (with probability 1): 
\begin{align}
\expect{\Delta[k]-vR[k]|H[k]} &\leq b + \frac{-v\theta^*}{\gamma[k-1]}+\frac{\alpha v^2}{2}\expect{(1/t^*-\gamma[k-1])^2 - (1/t^*-\gamma[k])^2|H[k]}\nonumber\\
&\quad + \frac{(vr_{max} + cv\norm{q} + (t_{max}-t_{min})|J[k]|)^2}{2\gamma_{min}^2\alpha v^2} \label{eq:DPP}
\end{align}
\end{lem}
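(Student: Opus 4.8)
The plan is to take the sure, per–sample–path inequality \eqref{eq:goalRHS3} from the previous lemma as the starting point. That inequality holds for \emph{every} decision vector $(T^*[k],R^*[k],Y^*[k])\in Row(A[k])$ and \emph{every} $\gamma^*\in[\gamma_{min},\gamma_{max}]$, so the idea is simply to make two clever choices and then take $\expect{\cdot\mid H[k]}$ of both sides. For $\gamma^*$ I would take $\gamma^*=1/t^*$; this is admissible because $(t^*,r^*,y^*)\in\overline{\Gamma}$ forces $t^*\in[t_{min},t_{max}]$, hence $1/t^*\in[1/t_{max},1/t_{min}]=[\gamma_{min},\gamma_{max}]$. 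For the comparison decision vector I would invoke Lemma~\ref{lem:1}(a): for any $(t,r,y)\in\Gamma$ it supplies a vector $(T^*[k],R^*[k],Y^*[k])\in Row(A[k])$ that is independent of $H[k]$ and satisfies $\expect{(T^*[k],R^*[k],Y^*[k])\mid H[k]}=(t,r,y)$ almost surely.

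Now I would condition on $H[k]$ in \eqref{eq:goalRHS3}. The constant $b$, the quantity $\tfrac{\alpha v^2}{2}(\gamma^*-\gamma[k-1])^2$, and the last term $\tfrac{(vr_{max}+cv\norm{q}+(t_{max}-t_{min})|J[k]|)^2}{2\gamma_{min}^2\alpha v^2}$ are all $H[k]$–measurable and pass through unchanged; the term $\tfrac{\alpha v^2}{2}(\gamma^*-\gamma[k])^2$ must stay inside $\expect{\cdot\mid H[k]}$ because $\gamma[k]$ is a function of $A[k]$ and is \emph{not} $H[k]$–measurable. Since $(Q[k],J[k],\gamma[k-1])$ is $H[k]$–measurable while $(T^*[k],R^*[k],Y^*[k])$ is independent of $H[k]$ with the prescribed conditional mean, the cross term collapses to
\[
\expect{\tfrac{\gamma^*}{\gamma[k-1]}\bigl[-vR^*[k]+J[k](T^*[k]-1/\gamma^*)+Q[k]^{\top}Y^*[k]\bigr]\,\Big|\,H[k]}=\tfrac{\gamma^*}{\gamma[k-1]}\bigl[-vr+J[k](t-1/\gamma^*)+Q[k]^{\top}y\bigr].
\]
This yields an inequality valid for all $(t,r,y)\in\Gamma$ and all $\gamma^*\in[\gamma_{min},\gamma_{max}]$; it extends to all $(t,r,y)\in\overline{\Gamma}$ by continuity of the right‑hand side in $(t,r,y)$ (a non‑strict inequality survives the limit). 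Plugging in $(t,r,y)=(t^*,r^*,y^*)$ and $\gamma^*=1/t^*$: the choice $t-1/\gamma^*=t^*-t^*=0$ kills the $J[k]$ term; $\tfrac{\gamma^*}{\gamma[k-1]}(-vr^*)=\tfrac{1}{t^*\gamma[k-1]}(-vr^*)=\tfrac{-v\theta^*}{\gamma[k-1]}$ since $\theta^*=r^*/t^*$; and $\tfrac{\gamma^*}{\gamma[k-1]}Q[k]^{\top}y^*\le 0$ because $\gamma^*,\gamma[k-1]>0$, $Q_i[k]\ge 0$, and $y_i^*\le 0$ by the feasibility constraint \eqref{eq:det2}. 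Discarding this nonpositive term gives exactly \eqref{eq:DPP}.

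The step that needs the most care — and the only real obstacle — is the $\Gamma$ versus $\overline{\Gamma}$ gap: Lemma~\ref{lem:1}(a) realizes an exact conditional mean only for points of $\Gamma$, whereas the optimal $(t^*,r^*,y^*)$ is guaranteed only to lie in the closure $\overline{\Gamma}$, so one cannot apply the exact‑mean construction to it directly. I would resolve this by deriving the bound first for all $(t,r,y)\in\Gamma$ and then passing to the limit along a sequence $(t^{(j)},r^{(j)},y^{(j)})\in\Gamma$ with $(t^{(j)},r^{(j)},y^{(j)})\to(t^*,r^*,y^*)$, using $\gamma^{*(j)}=1/t^{(j)}\in[\gamma_{min},\gamma_{max)}]$ at each stage and the uniform boundedness of $R[k],T[k],Y[k],Q[k],J[k],\gamma[k]$ to justify the limit. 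Everything else is routine bookkeeping about which quantities are $H[k]$–measurable together with the sign argument $Q[k]^{\top}y^*\le 0$.
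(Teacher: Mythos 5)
Your proposal is correct and follows essentially the same route as the paper's proof: invoke Lemma~\ref{lem:1}(a) for points of $\Gamma$, specialize $\gamma^*=1/t$, take conditional expectations given $H[k]$ (which kills the $J[k]$ cross term), pass to the limit along a sequence in $\Gamma$ approaching $(t^*,r^*,y^*)\in\overline{\Gamma}$, and finally drop $Q[k]^{\top}y^*\le 0$ and substitute $r^*/t^*=\theta^*$. The $\Gamma$ versus $\overline{\Gamma}$ issue you flag as the main subtlety is indeed handled in the paper exactly the way you propose.
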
 

\begin{proof} 
Fix $k \in \{1, 2, 3, \ldots\}$. Fix $(t,r,y)\in \Gamma$.  Observe that $t \in [t_{min}, t_{max}]$ and so $1/t \in [\gamma_{min}, \gamma_{max}]$. By Lemma \ref{lem:1}a, there is a decision vector $(T^*[k], R^*[k], Y^*[k]) \in Row(A[k])$ that is independent of $H[k]$ such that 
\begin{equation} \label{eq:cond-exp} 
\expect{(T^*[k], R^*[k], Y^*[k])|H[k]} = (t, r, y)
\end{equation} 
Substituting this $(T^*[k], R^*[k], Y^*[k])$, along with $\gamma^*=1/t$, into \eqref{eq:goalRHS3} gives 
\begin{align}
\Delta[k] - vR[k] &\leq b + \frac{1/t}{\gamma[k-1]}[-vR^*[k]+J[k](T^*[k]-t) + Q[k]^{\top}Y^*[k]]\nonumber\\
&\quad  + \frac{\alpha v^2}{2}(1/t-\gamma[k-1])^2 - \frac{\alpha v^2}{2}(1/t-\gamma[k])^2\nonumber\\
&\quad + \frac{(vr_{max} + cv\norm{q} + (t_{max}-t_{min})|J[k]|)^2}{2\gamma_{min}^2\alpha v^2} \label{eq:goalRHS}
\end{align}
Taking conditional expectations and using \eqref{eq:cond-exp} gives 
\begin{align}
\expect{\Delta[k] - vR[k]|H[k]} &\leq b + \frac{1/t}{\gamma[k-1]}[-vr + Q[k]^{\top}y]\nonumber\\
&\quad  + \frac{\alpha v^2}{2}\expect{(1/t-\gamma[k-1])^2 - (1/t-\gamma[k])^2|H[k]}\nonumber\\
&\quad + \frac{(vr_{max} + cv\norm{q} + (t_{max}-t_{min})|J[k]|)^2}{2\gamma_{min}^2\alpha v^2} \label{eq:goalRHS2}
\end{align}
This holds for all $(t,r,y)\in\Gamma$. Since $(t^*, r^*, y^*)\in\overline{\Gamma}$, there is a sequence of points $\{(t_j, r_j, y_j)\}_{j=1}^{\infty}$ in $\Gamma$ that converges to $(t^*, r^*, y^*)$. Taking a limit over such points in \eqref{eq:goalRHS2} gives 
\begin{align*}
\expect{\Delta[k] - vR[k]|H[k]} &\leq b + \frac{1/t^*}{\gamma[k-1]}[-vr^* + Q[k]^{\top}y^*]\nonumber\\
&\quad  + \frac{\alpha v^2}{2}\expect{(1/t^*-\gamma[k-1])^2 - (1/t^*-\gamma[k])^2|H[k]}\nonumber\\
&\quad + \frac{(vr_{max} + cv\norm{q} + (t_{max}-t_{min})|J[k]|)^2}{2\gamma_{min}^2\alpha v^2} 
\end{align*}
Recall the optimal solution has $y^*=(y_1^*, \ldots, y_n^*)$ with $y_i^*\leq 0$ for all $i$. 
The result is obtained by substituting $Q[k]^{\top}y^*\leq 0$ and 
$r^*/t^*=\theta^*$.
\end{proof}

\section{Reward guarantee}

Fix $\epsilon>0$. This section proves that our algorithm yields
$$\frac{\expect{\overline{R}[m]}}{\expect{\overline{T}[m]}} \geq \theta^* - O(\epsilon)$$
when $m$ is suitably large and when parameters $v, q$ are sized appropriately with $\epsilon$. This shows that desirable reward performance holds over the tasks $\{1, \ldots, m\}$. 
More strongly, this section shows the same result holds over any sequence of $m$ consecutive tasks. The corresponding result for the time average penalty constraints $\overline{Y}[m]$ is shown in Section \ref{section:constraints}.

\subsection{Deterministic bound on $J[k]$} 

\begin{lem} Under any $\{A[k]\}_{k=1}^{\infty}$ sequence,  our algorithm 
yields 
\begin{equation} \label{eq:J-bound}
 0\leq J[k]\leq v(\beta_1+\beta_2) 
 \end{equation} 
where $\beta_1$ and $\beta_2$ are nonnegative constants defined 
\begin{align}
\beta_1  &= \frac{(1+r_{max} + \sum_{i=1}^n q_iy_{i,min})}{t_{min}} \label{eq:theta1}\\
\beta_2 &=  \frac{1}{v}\left\lceil\alpha v\gamma_{max}(\gamma_{max} - \gamma_{min})\right\rceil (t_{max} - t_{min})\label{eq:theta2}
\end{align}
where $\lceil z \rceil$ denotes the smallest integer greater than or equal to the real number $z$.
\end{lem}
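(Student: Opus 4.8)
The plan is to prove nonnegativity trivially and to get the upper bound in \eqref{eq:J-bound} by playing a drift inequality for $J[k]$ off against a monotonicity property of the $\gamma$-update \eqref{eq:gamma}. Nonnegativity $J[k]\ge 0$ is immediate from the projection $[\,\cdot\,]_0^{\infty}$ in \eqref{eq:j-update}. For the upper bound it is convenient to write $L = v\bigl(r_{max}+\sum_{i=1}^n q_iy_{i,min}\bigr)/t_{min}$, so that $v\beta_1 = L + v\gamma_{max}$ with $\gamma_{max}=1/t_{min}$ as in \eqref{eq:gamma-max}.

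First I would record three elementary facts. \emph{(i)} From \eqref{eq:jt-bound} and \eqref{eq:bound1}, $J[k+1]-J[k]\le T[k]-1/\gamma[k]\le t_{max}-1/\gamma[k]$, so $J$ grows only at steps with $\gamma[k]>\gamma_{min}$, and never by more than $t_{max}-t_{min}$ in a single step. \emph{(ii)} From the closed form \eqref{eq:gamma}, $\gamma[k]>\gamma[k-1]$ requires $vR[k]-J[k]T[k]-Q[k]^{\top}Y[k]>0$; bounding $R[k]\le r_{max}$, $T[k]\ge t_{min}$, and $Q[k]^{\top}Y[k]\ge -v\sum_i q_iy_{i,min}$ (using $0\le Q_i[k]\le q_iv$ and $Y_i[k]\ge -y_{i,min}$), this forces $J[k]<L$. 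Contrapositively, once $J[k]\ge L$ the unprojected increment in \eqref{eq:gamma} is at most $t_{min}(L-J[k])/(\gamma[k-1]\alpha v^2)\le 0$, and once $J[k]\ge v\beta_1 = L+v\gamma_{max}$ that increment is at most $-1/(\alpha v\gamma_{max})$, so (as it is negative, no upper clipping occurs) either $\gamma[k]=\gamma_{min}$ or $\gamma[k]\le\gamma[k-1]-1/(\alpha v\gamma_{max})$; this is exactly why the ``$+1$'' appears in the numerator of $\beta_1$ in \eqref{eq:theta1}. \emph{(iii)} If $\gamma[k]=\gamma_{min}$ and $J[k]\ge L$, then \emph{(i)} gives $J[k+1]\le J[k]$ and \emph{(ii)} gives $\gamma[k+1]=\gamma_{min}$, so $\gamma$ stays at $\gamma_{min}$ and $J$ is nonincreasing for as long as $J\ge L$.

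Then I would assemble the bound by examining excursions of $J[k]$ above the level $v\beta_1$. Since $J[1]=0<v\beta_1$ and, by \emph{(i)}, $J$ rises by at most $t_{max}-t_{min}$ per step, any excursion has a first time $k_1$ with $v\beta_1\le J[k_1]<v\beta_1+(t_{max}-t_{min})$. While $J\ge v\beta_1\ (>L)$ during the excursion, \emph{(ii)} makes $\gamma[k_1-1],\gamma[k_1],\gamma[k_1+1],\dots$ a decreasing sequence in which every step not yet at $\gamma_{min}$ drops $\gamma$ by at least $1/(\alpha v\gamma_{max})$; as the total available drop is less than $\gamma_{max}-\gamma_{min}$, the value $\gamma_{min}$ is reached within $N$ steps where $N+1\le \lceil \alpha v\gamma_{max}(\gamma_{max}-\gamma_{min})\rceil$, and over those at most $N$ steps $J$ grows by at most $N(t_{max}-t_{min})$. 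Hence throughout the excursion $J\le J[k_1]+N(t_{max}-t_{min})<v\beta_1+(N+1)(t_{max}-t_{min})\le v\beta_1+v\beta_2$ by \eqref{eq:theta2}; once $\gamma$ reaches $\gamma_{min}$ with $J\ge L$, fact \emph{(iii)} keeps $J$ nonincreasing, hence $\le v\beta_1+v\beta_2$, until $J$ falls back below $L<v\beta_1$, after which a new excursion can only start by crossing $v\beta_1$ from below and the same estimate recurs. Formally this is an induction over excursion start times, or equivalently one inspects the first $k$ (if any) with $J[k]>v(\beta_1+\beta_2)$ and derives a contradiction; either way \eqref{eq:J-bound} follows.

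The genuinely delicate part is the bookkeeping in the last step: the $t_{max}-t_{min}$ overshoot at the first crossing together with the at most $N$ subsequent growth steps must still fit under $v\beta_1+v\beta_2$, which is precisely what the ceiling in \eqref{eq:theta2} and the ``$+1$'' in \eqref{eq:theta1} are there to absorb. One must also dispatch the degenerate case $t_{max}=t_{min}$ (then $T[k]-1/\gamma[k]\le 0$ always, so $J[k]\equiv 0$) and the possibility that $\gamma[k_1]=\gamma_{min}$ already at the start of an excursion (then \emph{(iii)} applies immediately). I expect this case analysis, not any single inequality, to be the main obstacle; the inequalities themselves are one-line consequences of \eqref{eq:gamma}, \eqref{eq:jt-bound}, and the boundedness assumptions \eqref{eq:bound1}--\eqref{eq:bound4}.
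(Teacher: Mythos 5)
Your proof is correct and uses essentially the same ingredients as the paper's: the elementary facts you record as \emph{(i)}, \emph{(ii)}, \emph{(iii)} are the paper's ``$J$ can grow by at most $t_{max}-t_{min}$ per task'' observation and its Claim~2 (that $J[k]\ge v\beta_1$ forces $\gamma[k]\le\max\{\gamma_{min},\,\gamma[k-1]-1/(\gamma_{max}\alpha v)\}$), with the paper using the exact algebraic cancellation $vr_{max}-v\beta_1 t_{min}+v\sum_i q_iy_{i,min}=-v$ where you write it as $t_{min}(L-J[k])\le -v$. The only real difference is organizational: the paper runs a sliding-window induction over blocks of length $m=\lceil\alpha v\gamma_{max}(\gamma_{max}-\gamma_{min})\rceil$ (Claim~1 says any state with $J[k_0]\le v\beta_1$ stays below $v(\beta_1+\beta_2)$ for the next $m$ steps, and the inductive step shows either such a $k_0$ exists in the window, or $\gamma$ hits $\gamma_{min}$ in the window, or a contradiction arises), whereas you track excursions of $J$ above $v\beta_1$ explicitly and bound the peak by overshoot-at-entry plus at most $m-1$ further growth steps before $\gamma$ is pinned at $\gamma_{min}$. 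Both arguments hinge on the same counting and give the same (in your case strict) bound; neither buys anything the other does not, so this is the same proof in different clothing.

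One small remark: your $N+1\le\lceil\alpha v\gamma_{max}(\gamma_{max}-\gamma_{min})\rceil$ bookkeeping is right, but it is worth stating the summing argument explicitly (as the paper does): if $\gamma[k]>\gamma_{min}$ for $k=k_1,\dots,k_1+j$ with $J[k]\ge v\beta_1$ throughout, then $\gamma[k_1+j]\le\gamma[k_1-1]-(j+1)/(\gamma_{max}\alpha v)$, and since $\gamma[k_1-1]\le\gamma_{max}$ and $\gamma[k_1+j]>\gamma_{min}$ one gets $j+1<\alpha v\gamma_{max}(\gamma_{max}-\gamma_{min})\le m$, so $\gamma$ reaches $\gamma_{min}$ within $m-1$ growth steps. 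Your phrasing ``within $N$ steps where $N+1\le\lceil\cdot\rceil$'' is correct but invites the reader to wonder whether it should be $N\le\lceil\cdot\rceil$; the displayed chain removes that ambiguity.
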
 
\begin{proof} 
Define 
\begin{equation} \label{eq:m}
m = \left\lceil\alpha v\gamma_{max}(\gamma_{max} - \gamma_{min})\right\rceil
\end{equation} 
We first make two claims: 
\begin{itemize} 
\item Claim 1: If $J[k_0]\leq v\beta_1$ for some task $k_0 \in \{1, 2, 3, \ldots\}$, then 
$$J[k]\leq v(\beta_1+\beta_2) \quad \forall k \in \{k_0, k_0+1, \ldots, k_0+m\}$$
To prove Claim 1, observe that for each task $k$ we have 
$$ T[k]-1/\gamma[k] \leq t_{max} - 1/\gamma_{max} = t_{max} - t_{min}$$
Thus, the update \eqref{eq:j-update} implies that $J[k]$ can increase by at most $t_{max}-t_{min}$ 
on any given task $k$.  Thus, $J[k]$ can increase by at most $m(t_{max}-t_{min})$ 
over any sequence of $m$ or fewer tasks. By construction, $m(t_{max}-t_{min})=v\beta_2$. 
It follows that if $J[k_0]\leq v\beta_1$ then $J[k]\leq v\beta_1 + v\beta_2$ for all $k \in \{k_0, k_0+1, \ldots, k_0+m\}$.
\item Claim 2: If $J[k]\geq v\beta_1$ for some task $k \in \{1, 2, 3, \ldots\}$ then $\gamma[k]\leq \gamma[k-1]$, and in particular
$$  \gamma[k]\leq \max\left\{\gamma_{min}, \gamma[k-1]- \frac{1}{\gamma_{max} \alpha v}\right\}$$
To prove Claim 2, suppose $J[k]\geq v\beta_1$. Observe that
\begin{align*}
\frac{vR[k]-J[k]T[k]-Q[k]^{\top}Y[k]}{\gamma[k-1]\alpha v^2}&\overset{(a)}{\leq} \frac{vr_{max} - v\beta_1t_{min} + \sum_{i=1}^nq_ivy_{i,min}}{\gamma[k-1]\alpha v^2} \\
&\overset{(b)}{=} \frac{-1}{\gamma[k-1]\alpha v}\\
&\leq \frac{-1}{\gamma_{max}\alpha v}
\end{align*}
where inequality (a) holds because $R[k]\leq r_{max}$, $T[k]\geq t_{min}$, and $-Q_i[k]Y_i[k]\leq q_ivy_{i,min}$ for all $i \in \{1, \ldots, n\}$; equality (b) holds by definition of $\beta_1$. 
Claim 2 follows in view of the iteration \eqref{eq:gamma}.
\end{itemize}

Since $J[1]=0\leq v\beta_1$, Claim 1 implies 
$J[k]\leq v(\beta_1+\beta_2)$ for all $k \in \{1, \ldots, 1+m\}$. We now use induction: Suppose $J[k]\leq v(\beta_1+\beta_2)$ for all $k \in \{1, \ldots, k_0\}$ for some positive integer $k_0\geq 1+m$. We show this is also true for $k_0+1$. If $J[k]\leq v\beta_1$ for some $k \in \{k_0+1-m, \ldots, k_0\}$ then Claim 1 implies $J[k_0+1]\leq v(\beta_1+\beta_2)$ and we are done. 

Now suppose $J[k]>v\beta_1$ for all $k \in \{k_0+1-m, \ldots, k_0\}$. Claim 2 implies 
$$  \gamma[k_0+1-m]\geq \gamma[k_0-m]\geq \ldots \geq \gamma[k_0]$$
Therefore, if $\gamma[k]=\gamma_{min}$ for some $k \in \{k_0+1-m, \ldots, k_0\}$ then 
$\gamma[k_0]=\gamma_{min}=1/t_{max}$ and the update \eqref{eq:j-update} gives 
$$ J[k_0+1]= [J[k_0] + T[k_0]-t_{max}]_0^{\infty} \leq J[k_0]\leq v(\beta_1+\beta_2)$$
and we are done. We now show the remaining case $\gamma[k]>\gamma_{min}$ for all $k \in \{k_0+1-m, \ldots, k_0\}$ is impossible. Suppose $\gamma[k]>\gamma_{min}$ for all $k \in \{k_0+1-m, \ldots, k_0\}$ (we reach a contradiction). Then Claim 2 implies 
$$ \gamma[k] -\gamma[k-1] \leq  - \frac{1}{\gamma_{max} \alpha v} \quad \forall k \in \{k_0+1-m, \ldots, k_0\}$$
Summing over $k \in \{k_0+1-m, \ldots, k_0\}$ gives 
$$\gamma[k_0]-\gamma[k_0-m] \leq -\frac{m}{\gamma_{max}\alpha v}$$ 
and so 
\begin{align*}
\gamma[k_0] &\leq \gamma_{max} - \frac{m}{\gamma_{max}\alpha v}\\
&\overset{(a)}{\leq}\gamma_{max} - (\gamma_{max}-\gamma_{min}) \\
&= \gamma_{min}
\end{align*}
where inequality (a) holds by definition of $m$ in \eqref{eq:m}. This contradicts the fact that $\gamma[k_0]>\gamma_{min}$. 
\end{proof} 

\subsection{Reward over any consecutive $m$ tasks} 

For postive integers $m, k_0$, 
define $\overline{R}[k_0;m]$ and $\overline{T}[k_0, m]$ as empirical averages over the  $m$ consecutive tasks that start with task $k_0$: 
\begin{align*}
\overline{R}[k_0;m] &= \frac{1}{m}\sum_{k=k_0}^{k_0+m-1}R[k]\\
\overline{T}[k_0;m] &= \frac{1}{m}\sum_{k=k_0}^{k_0+m-1}T[k]
\end{align*}

\begin{thm} \label{thm:reward} Suppose the problem \eqref{eq:det1}-\eqref{eq:det3} is feasible with optimal solution $(t^*, r^*, y^*) \in \overline{\Gamma}$ and optimal objective value $\theta^*=r^*/t^*$. Then for any parameters $v>0, \alpha>0$, $q=(q_1, \ldots, q_n)\geq 0$ and 
all positive integers $k_0, m$, our algorithm yields
\begin{align}
\frac{\expect{\overline{R}[k_0;m]}}{\expect{\overline{T}[k_0;m]}} &\geq \theta^* - \frac{d_1}{v} - \frac{vd_2}{m} - \frac{r_{max}/t_{min}}{m} \label{eq:reward} 
\end{align}
where $d_1, d_2$ are defined
\begin{align}
d_1 &= \frac{b + \frac{1}{2\gamma_{min}^2\alpha}(r_{max} + c\norm{q} + (t_{max} - t_{min})(\beta_1+\beta_2))^2}{t_{min}}\label{eq:d1} \\
d_2 &= \frac{\frac{1}{2}\norm{q}^2 + \frac{1}{2}(\beta_1+\beta_2)^2+ \frac{\alpha}{2}(\gamma_{max} - \gamma_{min})^2+\theta^*(\beta_1+\beta_2)}{t_{min}} \label{eq:d2}
\end{align}
In particular, fixing $q_i\geq 0$ and $\epsilon>0$ and choosing $v=1/\epsilon$, $\alpha=1$, gives for all $k_0$: 
\begin{align}
\frac{\expect{\overline{R}[k_0;m]}}{\expect{\overline{T}[k_0;m]}} &\geq \theta^* - O(\epsilon) \quad \forall m\geq 1/\epsilon^2
\end{align}
Similar behavior holds when replacing $\alpha=1$ with $\alpha = c_1/\max[c_2,1/2]$ where $c_1,c_2$ are fine tuned constants (defined later) in \eqref{eq:c1},\eqref{eq:c2}.
\end{thm}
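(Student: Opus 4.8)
The plan is to sum the expected drift-plus-penalty inequality \eqref{eq:DPP} over the block of tasks $k \in \{k_0, \ldots, k_0+m-1\}$, take total expectations, and rearrange the result into a lower bound on $\expect{\overline{R}[k_0;m]}$ measured against $\theta^{*}\expect{\overline{T}[k_0;m]}$; dividing by $\expect{\overline{T}[k_0;m]}$ (which is at least $t_{min}$ by \eqref{eq:bound1}) then produces \eqref{eq:reward}. So the first step is to take iterated expectations in \eqref{eq:DPP} to drop the conditioning on $H[k]$, and to use the deterministic queue bound \eqref{eq:J-bound} to replace the $|J[k]|$ appearing in the last term of \eqref{eq:DPP} by $v(\beta_1+\beta_2)$; after pulling out the common factor $v^{2}$, that term becomes the constant $\tfrac{1}{2\gamma_{min}^{2}\alpha}(r_{max}+c\norm{q}+(t_{max}-t_{min})(\beta_1+\beta_2))^{2}$, which is exactly $t_{min}d_1-b$.

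Summing over the block, the left side telescopes: $\sum_{k=k_0}^{k_0+m-1}\expect{\Delta[k]}=\expect{L[k_0+m]}-\expect{L[k_0]}$, which I bound below using $L[k_0+m]\ge 0$ and, from $0\le J[k]\le v(\beta_1+\beta_2)$ and $0\le Q_i[k]\le q_iv$, the bound $L[k_0]\le \tfrac{v^{2}}{2}\big((\beta_1+\beta_2)^{2}+\norm{q}^{2}\big)$. The prox differences also telescope: $\sum_{k=k_0}^{k_0+m-1}\big[(1/t^{*}-\gamma[k-1])^{2}-(1/t^{*}-\gamma[k])^{2}\big]=(1/t^{*}-\gamma[k_0-1])^{2}-(1/t^{*}-\gamma[k_0+m-1])^{2}\le(\gamma_{max}-\gamma_{min})^{2}$, using $1/t^{*},\gamma[k_0-1]\in[\gamma_{min},\gamma_{max}]$. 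The only remaining term is $-v\theta^{*}\sum_{k=k_0}^{k_0+m-1}\expect{1/\gamma[k-1]}$, and this is where the objective gets tied back to $\overline{T}$: by the virtual-queue inequality \eqref{eq:vq2} applied to the appropriate window, together with $J[\cdot]\le v(\beta_1+\beta_2)$, one gets $\sum_{k=k_0}^{k_0+m-1}1/\gamma[k-1]\ge m\,\overline{T}[k_0;m]-v(\beta_1+\beta_2)-(\text{a bounded boundary term})$, hence $-v\theta^{*}\sum_{k}\expect{1/\gamma[k-1]}\le -v\theta^{*}m\expect{\overline{T}[k_0;m]}+v^{2}\theta^{*}(\beta_1+\beta_2)+v\theta^{*}\cdot(\text{bounded})$.

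Collecting all terms and dividing by $vm$ gives $\expect{\overline{R}[k_0;m]}-\theta^{*}\expect{\overline{T}[k_0;m]}\ge-\tfrac{t_{min}d_1}{v}-\tfrac{v\,t_{min}d_2}{m}-\tfrac{r_{max}}{m}$, where the coefficient of $1/v$ absorbs $b$ together with the constant from the last line of \eqref{eq:DPP}, the coefficient of $v/m$ absorbs $\tfrac12\norm{q}^{2}$, $\tfrac12(\beta_1+\beta_2)^{2}$, $\tfrac{\alpha}{2}(\gamma_{max}-\gamma_{min})^{2}$ and $\theta^{*}(\beta_1+\beta_2)$, and the last term absorbs the boundary contribution via $\theta^{*}\le r_{max}/t_{min}$. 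Dividing by $\expect{\overline{T}[k_0;m]}\ge t_{min}$ yields \eqref{eq:reward}. For the corollary, plugging $v=1/\epsilon$, $\alpha=1$ makes $d_1,d_2$ constants independent of $\epsilon$ (note $\beta_2$ is bounded uniformly in $v\ge1$ since $\tfrac1v\lceil\alpha vX\rceil\le\alpha X+1$), so $d_1/v=O(\epsilon)$, while $m\ge1/\epsilon^{2}$ makes $vd_2/m$ and $(r_{max}/t_{min})/m$ both $O(\epsilon)$; the choice $\alpha=c_1/\max[c_2,1/2]$ is handled by the identical computation with a different numerical value of $\alpha$.

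I expect the main obstacle to be the bookkeeping in the reconnection step, specifically the index mismatch between the $1/\gamma[k-1]$ that appears in \eqref{eq:DPP} and the $1/\gamma[k]$ that appears in the virtual-queue inequality \eqref{eq:vq2}: reindexing produces boundary terms in $1/\gamma[k_0-1]$ and $T[k_0+m-1]$, and for $k_0=1$ one must invoke the initialization $\gamma[0]=\gamma_{min}$ (so $1/\gamma[0]=t_{max}$) to cancel the stray $T[m]$. Keeping these boundary pieces, together with the $J$-queue slack, bounded by constants of the correct order (using $T[k]\le t_{max}$, $1/\gamma[k]\in[t_{min},t_{max}]$, and $\theta^{*}\le r_{max}/t_{min}$) so that they fold cleanly into $d_2$ and the $r_{max}/t_{min}$ term, while tracking every constant explicitly, is the delicate part; the rest is a routine drift-plus-penalty telescoping argument.
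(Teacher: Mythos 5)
Your proposal follows essentially the same path as the paper: take iterated expectations in \eqref{eq:DPP}, replace $|J[k]|$ by $v(\beta_1+\beta_2)$, sum, telescope the $\Delta$ and prox terms, bound $L[\cdot]$ by the deterministic queue bounds, and use the $J$-queue link (\eqref{eq:jt-bound}, or its summed form \eqref{eq:vq2}) to convert $\sum 1/\gamma[k-1]$ into $\sum T[k-1]$ plus a bounded slack. That is exactly the argument given in the paper.

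One bookkeeping detail in your version is slightly off, and it is worth flagging because it explains the $\frac{r_{max}/t_{min}}{m}$ term in \eqref{eq:reward}. The paper intentionally sums \eqref{eq:DPP} over the \emph{shifted} window $k\in\{k_0+1,\ldots,k_0+m\}$ so that the $T[k-1]$ sum aligns exactly with $m\,\overline{T}[k_0;m]$; the price is an $R$-boundary term $\expect{R[k_0+m]-R[k_0]}\le r_{max}$, which after dividing by $\expect{\overline{T}[k_0;m]}\ge t_{min}$ produces the $\frac{r_{max}/t_{min}}{m}$ in \eqref{eq:reward}. You instead sum over $\{k_0,\ldots,k_0+m-1\}$ (aligning $R$ exactly) and pay a $\gamma$-boundary price: reindexing $\sum_{k=k_0}^{k_0+m-1}1/\gamma[k-1]$ and applying \eqref{eq:vq2} leaves a deficit of at most $T[k_0+m-1]-T[k_0-1]\le t_{max}-t_{min}$ when $k_0>1$, contributing $\theta^*(t_{max}-t_{min})/(m\,t_{min})$ to the final bound. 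That is \emph{not} in general $\le r_{max}/(m\,t_{min})$ (e.g.\ if $\theta^*$ is near $r_{max}/t_{min}$ and $t_{max}>2t_{min}$), so your argument does not recover \eqref{eq:reward} with exactly the stated constants, only with an $O(1/m)$ boundary term of a possibly larger size. This is harmless for the $O(\epsilon)$ conclusion, but if you want the theorem verbatim, shift the summation window as the paper does and carry the $R$ boundary instead of the $\gamma$ boundary.
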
 
\begin{proof} 
Fix $k \in \{2, 3, 4, \ldots\}$. Using iterated expectations and substituting $|J[k]|\leq v(\beta_1+\beta_2)$ into \eqref{eq:DPP} gives 
\begin{align}
\expect{\Delta[k]-vR[k]} &\leq b  -v\theta^*\expect{\frac{1}{\gamma[k-1]}}+\frac{\alpha v^2}{2}\expect{(1/t^*-\gamma[k-1])^2 - (1/t^*-\gamma[k])^2}\nonumber\\
&\quad + \frac{(r_{max} + c\norm{q} + (t_{max}-t_{min})(\beta_1+\beta_2))^2}{2\gamma_{min}^2\alpha}\label{eq:R1}
\end{align}
Manipulating the second term on the right-hand-side above gives
\begin{align*}
-v\theta^*\expect{\frac{1}{\gamma[k-1]}} &= -v\theta^*\expect{T[k-1]} + v\theta^*\expect{T[k-1]-\frac{1}{\gamma[k-1]}}\\
&\leq -v\theta^*\expect{T[k-1]} + v\theta^*\expect{J[k]-J[k-1]}
\end{align*}
where the final inequality holds by \eqref{eq:jt-bound}. Substituting this into the right-hand-side of \eqref{eq:R1} gives
\begin{align}
\expect{\Delta[k]-vR[k]} &\leq b -v\theta^*\expect{T[k-1]} + v\theta^*\expect{J[k]-J[k-1]}\nonumber \\
&\quad +\frac{\alpha v^2}{2}\expect{(1/t^*-\gamma[k-1])^2 - (1/t^*-\gamma[k])^2}\nonumber\\
&\quad + \frac{(r_{max} + c\norm{q} + (t_{max}-t_{min})(\beta_1+\beta_2))^2}{2\gamma_{min}^2\alpha}\label{eq:R2}
\end{align}
Summing the above over $k \in \{k_0+1, \ldots, k_0+m\}$ and dividing by $mv$ gives
\begin{align}
&\frac{1}{mv}\expect{L[k_0+m+1]-L[k_0+1]}-\frac{1}{m}\sum_{k=k_0+1}^{k_0+m}\expect{R[k]} \nonumber\\
&\leq \frac{b}{v} -\theta^*\frac{1}{m}\sum_{k=k_0+1}^{k_0+m}\expect{T[k-1]} + \frac{\theta^*}{m}\expect{J[k_0+m]-J[k_0]}\nonumber \\
&\quad +\frac{\alpha v}{2m}\expect{(1/t^*-\gamma[k_0])^2 - (1/t^*-\gamma[k_0+m])^2}\nonumber\\
&\quad + \frac{(r_{max} + c\norm{q} + (t_{max}-t_{min})(\beta_1+\beta_2))^2}{2v\gamma_{min}^2\alpha}\nonumber\\
&\leq \frac{b}{v} -\theta^*\expect{\overline{T}[k_0;m]} + \frac{\theta^*}{m}v(\beta_1+\beta_2)\nonumber \\
&\quad +\frac{\alpha v}{2m}(\gamma_{max}-\gamma_{min})^2\nonumber\\
&\quad + \frac{(r_{max} + c\norm{q} + (t_{max}-t_{min})(\beta_1+\beta_2))^2}{2v\gamma_{min}^2\alpha}\label{eq:foofinal} 
\end{align}
where the final inequality substitutes the definition of $\overline{T}[k_0;m]$ and uses:
\begin{align*}
&J[k]\leq v(\beta_1+\beta_2) \quad \forall k \\
&(1/t^*-\gamma[k])^2 \leq (\gamma_{max}-\gamma_{min})^2 \quad \forall k 
\end{align*}
Rearranging terms in \eqref{eq:foofinal} gives
\begin{align*}
\expect{\overline{R}[k_0;m]} &\geq \theta^*\expect{\overline{T}[k_0;m]} -   \frac{\expect{R[k_0+m]-R[k_0]}}{m} \\
&\quad -\frac{1}{mv}\expect{L[k_0+1]-L[k_0+m+1]} \\
&\quad - \frac{b + \frac{1}{2\gamma_{min}^2\alpha}(r_{max} + c\norm{q} + (t_{max}-t_{min})(\beta_1+\beta_2))^2}{v} \\
&\quad - \frac{v}{m}(\frac{\alpha}{2}(\gamma_{max}-\gamma_{min})^2 + \theta^*(\beta_1+\beta_2))
\end{align*}
Terms on the right-hand-side have the following bounds: 
\begin{align*}
&-\expect{R[k_0+m]-R[k_0]} \geq -r_{max} \\
&-\expect{L[k_0+1]-L[k_0+m+1]} \geq -\expect{L[k_0+1]} \geq -\frac{1}{2}\norm{vq}^2 - \frac{1}{2}v^2(\beta_1+\beta_2)^2
\end{align*}
where the final inequality uses
$$ L[k_0+1]=\frac{1}{2}\norm{Q[k_0+1]}^2+\frac{1}{2}J[k_0+1]^2 $$
and uses the fact that for all $k$ we have $\norm{Q[k]}\leq \norm{vq}$ (since $0\leq Q_i[k]\leq q_iv$ from \eqref{eq:q-update}) and 
  $0\leq J[k]\leq v(\beta_1+\beta_2)$ (from \eqref{eq:J-bound}). 
This proves the result upon usage of the constants $d_1, d_2$. 
\end{proof}

\subsection{No penalty constraints} \label{section:n=0}

A special and nontrivial case of Theorem \ref{thm:reward} is when the only goal is to maximize $\overline{R}/\overline{T}$,  
the time average reward per unit time, with no additional constraints on the $Y_i[k]$ processes. This can be viewed as the case $n=0$ (so there are no $Y_i[k]$ processes). Equivalently, it can be treated by using $q=0$ in Theorem \ref{thm:reward}. 

To consider this $n=0$ case, let $\overline{R}[m]$ and $\overline{T}[m]$ be averages over tasks $\{1, \ldots, m\}$. Fix $\epsilon>0$.   
The work \cite{neely-renewal-jmlr} showed that, in the absence of a-priori knowledge of the probability distribution of the $A[k]$ matrices, any algorithm that runs over over tasks $\{1, 2, 3, \ldots, m\}$ and achieves 
$$ \frac{\expect{\overline{R}[m]}}{\expect{\overline{T}[m]}} \geq \theta^* - O(\epsilon) $$
must have $m\geq \Omega(1/\epsilon^2)$.  That is, the convergence time is necessarily $\Omega(1/\epsilon^2)$. The work in \cite{neely-renewal-jmlr} developed a Robbins-Monro iterative algorithm with a vanishing stepsize   
to achieve this optimal convergence time. In particular, the algorithm in \cite{neely-renewal-jmlr} achieves
$$\frac{\expect{\overline{R}[m]}}{\expect{\overline{T}[m]}} \geq \theta^* - O(1/\sqrt{m}) \quad \forall m \in \{1, 2, 3, \ldots\} $$
The vanishing stepsize means the algorithm of \cite{neely-renewal-jmlr} cannot adapt to changes. 
The algorithm of the current paper achieves the optimal convergence time using a different technique. The parameter $v$ can be interpreted as an inverse stepsize parameter, so the stepsize is a constant $\epsilon = 1/v$.  With this \emph{constant} stepsize, the algorithm is \emph{adaptive} and achieves reward per unit time within $O(\epsilon)$ of optimality over any consecutive sequence of $\Theta(1/\epsilon^2)$ tasks for which the $\{A[k]\}$ matrices have i.i.d. behavior, regardless of whether the distribution was different before the start of that sequence.

The asymptotic results of Theorem \ref{thm:reward} hold for any $\alpha$ that remains a $\Theta(1)$ constant regardless of the size of $\epsilon$ (the suggestion $\alpha=1$ was made for simplicity).  The value $\alpha$ can be fine tuned. Using $v=1/\epsilon$,  $q=0$ in \eqref{eq:reward} gives
\begin{equation} \label{eq:refined-bound} 
\frac{\expect{\overline{R}[k_0;m]}}{\expect{\overline{T}[k_0;m]}} \geq \theta^* - \epsilon d_1 - \frac{d_2}{\epsilon m} -\frac{(r_{max}/t_{min})}{m} \quad \forall m \in \{1, 2, 3, \ldots\}
\end{equation} 
where 
\begin{align*}
d_1 &= \frac{b + \frac{1}{2\gamma_{min}^2\alpha}(r_{max} + (t_{max} - t_{min})(\beta_1+\beta_2))^2}{t_{min}}\\
d_2&= \frac{\frac{1}{2}(\beta_1+\beta_2)^2 + \frac{\alpha}{2}(\gamma_{max} - \gamma_{min})^2 + \theta^*(\beta_1+\beta_2)}{t_{min}} 
\end{align*}
where 
$$\beta_1+\beta_2 = \frac{1 + r_{max} + \alpha(1/t_{min}-1/t_{max})(t_{max}-t_{min})}{t_{min}} $$
The term $-\epsilon d_1$ in \eqref{eq:refined-bound} does not vanish as $m\rightarrow\infty$. 
Choosing $\alpha>0$ to minimize $d_1$  amounts to minimizing
$$ \frac{1}{\alpha}\left[r_{max} + \frac{(t_{max}-t_{min})(1+r_{max})}{t_{min}} + \alpha\left(\frac{t_{max}-t_{min}}{t_{min}}\right)\left(\frac{t_{max}}{t_{min}}+\frac{t_{min}}{t_{max}}-2\right) \right]^2$$
That is, choose $\alpha>0$ to minimize
$$ \frac{c_1}{\sqrt{\alpha}} + c_2\sqrt{\alpha} $$
where 
\begin{align}
c_1 &= r_{max} + \frac{(t_{max} -t_{min})(1+r_{max})}{t_{min}}  \label{eq:c1} \\
c_2 &=  \left(\frac{t_{max}-t_{min}}{t_{min}}\right)\left(\frac{t_{max}}{t_{min}}+\frac{t_{min}}{t_{max}}-2\right) \label{eq:c2} 
\end{align}
This yields $\alpha = c_1/c_2$. To avoid a very large value of $\alpha$ (which affects the $d_2$ constant) in the special case $c_2<1/2$, one might adjust this to using $\alpha = \frac{c_1}{\max[c_2, 1/2]}$.

\subsection{Sample path limit}

\begin{lem} Suppose the problem \eqref{eq:det1}-\eqref{eq:det3} is feasible with optimal solution $(t^*, r^*, y^*) \in \overline{\Gamma}$ and optimal objective value $\theta^*=r^*/t^*$. Then for any parameters $v>0, \alpha>0$, $q=(q_1, \ldots, q_n)\geq 0$ and 
all positive integers $k_0, m$, our algorithm yields
\begin{align*}
\liminf_{m\rightarrow\infty} \frac{\overline{R}[m]}{\overline{T}[m]} \geq \theta^* -\frac{d_1}{v} \quad \mbox{with prob 1}
\end{align*}
where $d_1$ is given in \eqref{eq:d1}.
\end{lem}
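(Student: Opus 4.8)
The plan is to upgrade the conditional drift-plus-penalty bound \eqref{eq:DPP} to an almost-sure statement about time averages, by telescoping a bounded augmented Lyapunov function and then using a strong law of large numbers (SLLN) for bounded martingale differences to replace conditional expectations by empirical averages.

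First I would start from \eqref{eq:DPP} and substitute the deterministic bound $|J[k]|\le v(\beta_1+\beta_2)$ from \eqref{eq:J-bound}; the same arithmetic that opens the proof of Theorem \ref{thm:reward} shows that $b$ plus the last term of \eqref{eq:DPP} is then at most $d_1 t_{min}$, with $d_1$ as in \eqref{eq:d1}. Since $\gamma[k-1]\le\gamma_{max}=1/t_{min}$, since \eqref{eq:jt-bound} gives $T[k-1]-1/\gamma[k-1]\le J[k]-J[k-1]$, and since $\theta^*=r^*/t^*\ge 0$ by \eqref{eq:bound2}, the term $-v\theta^*/\gamma[k-1]$ can be replaced by $-v\theta^*T[k-1]+v\theta^*(J[k]-J[k-1])$. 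Now define the augmented potential $\Phi[k]=L[k]+\frac{\alpha v^2}{2}(1/t^*-\gamma[k-1])^2$, which is bounded by a ($v$-dependent) constant because $L[k]=\frac{1}{2}J[k]^2+\frac{1}{2}\norm{Q[k]}^2$ is bounded (using $0\le J[k]\le v(\beta_1+\beta_2)$ and $0\le Q_i[k]\le q_iv$) and $(1/t^*-\gamma[k-1])^2\le(\gamma_{max}-\gamma_{min})^2$. Collecting terms, \eqref{eq:DPP} yields, for every $k\ge 2$,
\begin{equation*}
\expect{\Phi[k+1]-\Phi[k]|H[k]}\le v\,\expect{R[k]|H[k]}+d_1 t_{min}-v\theta^*T[k-1]+v\theta^*(J[k]-J[k-1]).
\end{equation*}

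Next I would sum this over $k\in\{2,\dots,m+1\}$: the sum $\sum v\theta^*T[k-1]$ equals $v\theta^*m\overline{T}[m]$ up to $O(1)$ boundary terms, and $\sum(J[k]-J[k-1])$ telescopes to $J[m+1]\le v(\beta_1+\beta_2)$. After dividing by $m$, it suffices to establish, almost surely, that (a) $\frac{1}{m}\sum_{k\le m}(R[k]-\expect{R[k]|H[k]})\to 0$ and (b) $\frac{1}{m}\sum_k\expect{\Phi[k+1]-\Phi[k]|H[k]}\to 0$. For (a), $D[k]:=R[k]-\expect{R[k]|H[k]}$ is a martingale difference bounded by $r_{max}$, so $\sum_k\expect{D[k]^2}/k^2<\infty$ and the martingale SLLN gives the claim. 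For (b), write $\frac{1}{m}\sum_k\expect{\Phi[k+1]-\Phi[k]|H[k]}=\frac{1}{m}(\Phi[m+2]-\Phi[2])-\frac{1}{m}\sum_k(\Phi[k+1]-\expect{\Phi[k+1]|H[k]})$; the first term is $o(1)$ by boundedness of $\Phi$, and the second is $\frac{1}{m}$ times a sum of bounded martingale differences, hence $o(1)$ by the same SLLN. All remaining boundary and telescoping remainders are $O(1/m)$ by the same uniform boundedness. Combining, almost surely $v\theta^*\overline{T}[m]-v\overline{R}[m]\le d_1 t_{min}+o(1)$, i.e.\ $\overline{R}[m]\ge\theta^*\overline{T}[m]-d_1 t_{min}/v-o(1)$. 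Dividing by $\overline{T}[m]\ge t_{min}$ gives $\overline{R}[m]/\overline{T}[m]\ge\theta^*-d_1/v-o(1)$, and taking $\liminf_{m\to\infty}$ proves the lemma.

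The main obstacle is the martingale-SLLN step used in (a) and (b): the passage from conditional expectations to sample-path averages. This relies on the standard fact that a bounded martingale difference sequence $\{D[k]\}$ satisfies $\frac{1}{m}\sum_{k\le m}D[k]\to 0$ almost surely (Kolmogorov's $\sum_k\sigma_k^2/k^2<\infty$ criterion in martingale form, or Azuma--Hoeffding plus Borel--Cantelli along a geometric subsequence with a maximal inequality for interpolation). This is precisely the machinery already used for the sample-path claim in Lemma \ref{lem:1}(b) (cf.\ \cite{sno-text}\cite{neely-renewal-jmlr}), so it can be cited; everything else is deterministic bookkeeping inherited from the proof of Theorem \ref{thm:reward}.
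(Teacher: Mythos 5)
Your proof is correct and is essentially the paper's argument: your augmented potential $\Phi[k]=L[k]+\frac{\alpha v^2}{2}(1/t^*-\gamma[k-1])^2$ is exactly what the paper telescopes (the paper's $X[k]$ is precisely $\Phi[k+1]-\Phi[k]-vR[k]$), and the paper invokes the same martingale SLLN, merely bundling your two martingale-difference applications (a) and (b) into the single sequence $W[k]=\expect{X[k]|H[k]}-X[k]$. The only other cosmetic mismatch is that the bound $\gamma[k-1]\le\gamma_{max}$ you mention is not actually used in that step (only \eqref{eq:jt-bound} and $\theta^*\ge 0$ are), but this does not affect correctness.
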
 
\begin{proof} 
The proof relates expectations and sample paths 
in a manner that is similar to Theorem 4.4 in \cite{sno-text}. Let $(t^*, r^*,y^*) \in \overline{\Gamma}$ solve \eqref{eq:det1}-\eqref{eq:det3} and let $\theta^*=r^*/t^*$. For $k \in \{1, 2, 3, \ldots\}$ define 
\begin{align*}
X[k] &= \Delta[k] - vR[k] - \frac{\alpha v^2}{2}(1/t^*-\gamma[k-1])^2 + \frac{\alpha v^2}{2}(1/t^*-\gamma[k])^2\\
W[k] &= \expect{X[k]|H[k]} -X[k]
\end{align*}
where $H[k]=(A[1], \ldots, A[k-1])$, and we observe that $X[1], \ldots, X[k-1]$ are determined from the information $H[k]$. Next observe that there is a positive number $z$ such 
that for each $k \in \{1, 2, 3, \ldots\}$ we have 
\begin{itemize} 
\item $\expect{W[k]|H[k]} = 0$. 
\item $|W[k]|\leq z$.
\end{itemize} 
where the final fact holds because all virtual queues are deterministically bounded. 
For each positive integer $m$ define $\overline{W}[m] = \frac{1}{m}\sum_{k=1}^mW[k]$. It follows by the law of large numbers for martingale 
differences  (see \cite{chow-lln}) that  
\begin{equation} \label{eq:martingale} 
\lim_{m\rightarrow\infty} \overline{W}[m]=0 \quad \mbox{ with prob 1}
\end{equation}
We have 
\begin{equation} \label{eq:zbar}
\overline{W}[m] = \left(\frac{1}{m}\sum_{k=1}^m\expect{X[k]|H[k]}\right)-\overline{X}[m] 
\end{equation} 
By definition of $X[k]$ we obtain 
\begin{align}
\overline{X}[m]&=\frac{1}{m}\sum_{k=1}^m X[k]\nonumber \\
&= \frac{L[m+1]-L[1]}{m} -v\overline{R}[m] -\frac{\alpha v^2}{2m}(1/t^*-\gamma[0])^2 + \frac{\alpha v^2}{2m}(1/t^*-\gamma[m])^2 \nonumber\\
&=-v\overline{R}[m] + G_1[m] \label{eq:Xk}
\end{align}
where $G_1[m]$ is a random sequence that satisfies
\begin{equation} \label{eq:G1sure}
 \lim_{m\rightarrow\infty} G_1[m]=0 \quad \mbox{(surely)}
 \end{equation} 
which holds because all virtual queues are deterministically bounded, as are values $L[m+1]$ and $(1/t^*-\gamma[m])^2$.

We have by \eqref{eq:DPP} 
that for each positive integer $k\geq 2$: 
\begin{align*}
\expect{X[k]|H[k]} &\leq b - \frac{v\theta^*}{\gamma[k-1]} + \frac{(vr_{max} + cv\norm{q} + v(t_{max}-t_{min})(\beta_1+\beta_2))^2}{2\gamma_{min}^2\alpha v^2} \\
&=d_1t_{min} - v\theta^*\frac{1}{\gamma[k-1]} \\
&= d_1t_{min} - v\theta^*T[k-1] + v\theta^*(T[k-1]-1/\gamma[k-1])\\
&\leq d_1t_{min} - v\theta^*T[k-1] + v\theta^*(J[k]-J[k-1]) 
\end{align*}
where the first inequality uses the definition of $d_1$ in \eqref{eq:d1}; the final inequality uses \eqref{eq:jt-bound}; we have assumed $k\geq 2$ so that $T[k-1]$ makes sense on the right-hand-side.  Summing over $k \in \{2, \ldots, m\}$ and using $d_1t_{min}(m-1)/m\leq d_1t_{min}$ gives 
\begin{align}
\frac{1}{m}\sum_{k=1}^m\expect{X[k]|H[k]} &\leq \frac{\expect{X[1]}}{m}+d_1t_{min} - v\theta^*\overline{T}[m] + \frac{v\theta^*T[m]}{m}+v\theta^*\frac{(J[m]-J[1])}{m}\nonumber\\
&= -v\theta^*\overline{T}[m] + d_1t_{min} + G_2[m] \label{eq:G2}
\end{align} 
where $G_2[m]$ is a random process that satisfies 
\begin{equation} \label{eq:G2sure}
 \lim_{m\rightarrow\infty} G_2[m] = 0 \quad \mbox{(surely)} 
 \end{equation} 
Subtracting $\overline{X}[m]$  from both sides of  \eqref{eq:G2} and using \eqref{eq:zbar} gives 
\begin{align*}
\overline{W}[m] &\leq - v\theta^*\overline{T}[m] + d_1t_{min} + G_2[m] - \overline{X}[m]\\
&= -v\theta^*\overline{T}[m] + d_1t_{min} + G_2[m] + v\overline{R}[m] - G_1[m]
\end{align*}
where the final equality uses \eqref{eq:Xk}. 
Dividing both sides by $v\overline{T}[m]$ and rearranging terms gives 
\begin{align*}
\frac{\overline{R}[m]}{\overline{T}[m]} - \theta^* &\geq  - \frac{d_1t_{min}}{v\overline{T}[m]} + \frac{\overline{W}[m]}{v\overline{T}[m]} + \frac{G_1[m]-G_2[m]}{v\overline{T}[m]}\\
&\geq -\frac{d_1}{v} + \frac{-|\overline{W}[m]|}{vt_{min}} - \frac{|G_1[m]-G_2[m]|}{vt_{min}} 
\end{align*}
Taking $m\rightarrow\infty$ and using \eqref{eq:martingale}, \eqref{eq:G1sure}, \eqref{eq:G2sure} gives (with prob 1)
$$ \liminf_{m\rightarrow\infty} (\overline{R}[m]/\overline{T}[m] - \theta^*) \geq - \frac{d_1}{v} $$
\end{proof}

\section{Constraints} \label{section:constraints} 

This section considers the process $Y[k] \in \mathbb{R}^n$, where $n$ is a given positive integer (the case $n=0$ is considered in Subsection \ref{section:n=0}). The hierarchical nature of our algorithmic decision for each task $k$ allows an analysis of the virtual queues $Q[k]$ separately from the $\gamma[k]$ decisions.   Define
$$Z[k] = \norm{Q[k]} \quad \forall k \in \{1, 2, 3, \ldots\}$$
Recall that $H[k] = (A[1], \ldots, A[k-1])$ and knowledge of $H[k]$ determines $Q[k]$ and $Z[k]$ (that is, $Q[k]$ and $Z[k]$ are $H[k]$-measurable). 

\begin{thm} \label{thm:constraint1} Assume the Slater condition \eqref{eq:Slater} holds for some $s>0$ and vector $(t^s, r^s, y^s) \in \overline{\Gamma}$.  
Then for all $k \in \{1, 2, 3, \ldots\}$ our algorithm gives 
\begin{equation} \label{eq:norm-bound}
\expect{Z[k+1]-Z[k]|H[k]} \leq \left\{\begin{array}{cc}
c & \mbox{ if $Z[k]<\lambda$} \\
-s/2 & \mbox{ if $Z[k]\geq \lambda$} 
\end{array}\right.
\end{equation}
and 
\begin{equation}\label{eq:Zc}
 |Z[k+1]-Z[k]|\leq c  
 \end{equation} 
where $c$ is defined in \eqref{eq:bound3} and $\lambda$ is defined 
\begin{align} 
\lambda&= \max\left\{\frac{vd_0}{s} - \frac{s}{4}, \frac{s}{2}\right\} \label{eq:lambda} \\
d_0&= 2r_{max} + 2(\beta_1+\beta_2)(t_{max}-t_{min}) + c^2/v \label{eq:d0} 
\end{align} 
\end{thm}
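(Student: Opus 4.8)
The plan is to work with $Z[k]^2 = \norm{Q[k]}^2$ rather than with $Z[k]$ directly: first obtain a one-step bound on $\expect{Z[k+1]^2\mid H[k]}$, then convert it into a bound on $\expect{Z[k+1]\mid H[k]}$ by applying Jensen's inequality to the concave map $x\mapsto\sqrt{x}$. The pathwise estimate \eqref{eq:Zc} is handled separately and already disposes of the branch $Z[k]<\lambda$.

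First I would prove \eqref{eq:Zc}. Projection onto $[0,q_iv]$ is nonexpansive and fixes $0$, so a two-case check of the update \eqref{eq:q-update} gives $|Q_i[k+1]-Q_i[k]|\le |Y_i[k]|$ for each $i$; hence $\norm{Q[k+1]-Q[k]}\le\norm{Y[k]}\le c$ by \eqref{eq:bound3}, and the reverse triangle inequality yields $|Z[k+1]-Z[k]|=|\,\norm{Q[k+1]}-\norm{Q[k]}\,|\le c$. This immediately gives the $Z[k]<\lambda$ case of \eqref{eq:norm-bound}.

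For the $Z[k]\ge\lambda$ case I would first bound the conditional second moment. Squaring \eqref{eq:q-update}, using $([z]_0^{q_iv})^2\le z^2$, and summing over $i$ (exactly as in the derivation of \eqref{eq:LDa}) gives $Z[k+1]^2\le Z[k]^2+2Q[k]^{\top}Y[k]+\norm{Y[k]}^2\le Z[k]^2+2Q[k]^{\top}Y[k]+c^2$. Next I would bound $\expect{Q[k]^{\top}Y[k]\mid H[k]}$ using the Slater point: by Lemma \ref{lem:1}a there is, for each point of $\Gamma$ near $(t^s,r^s,y^s)$, a decision vector for task $k$ independent of $H[k]$ with that conditional mean; plugging it into the row-selection optimality inequality \eqref{eq:analysis1}, rearranging, taking conditional expectations, and passing to the limit over such points (as in the derivation of \eqref{eq:DPP}) yields
\[
\expect{Q[k]^{\top}Y[k]\mid H[k]}\ \le\ v\,r_{max}+J[k](t_{max}-t_{min})+Q[k]^{\top}y^s ,
\]
where I used $0\le R[k]\le r_{max}$ and $t_{min}\le T[k]\le t_{max}$. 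For the last term, $y^s_i\le -s$, $Q_i[k]\ge 0$, and the elementary bound $\sum_i Q_i[k]\ge\norm{Q[k]}$ give $Q[k]^{\top}y^s\le -s\,Z[k]$; combined with $J[k]\le v(\beta_1+\beta_2)$ from \eqref{eq:J-bound}, this produces
\[
\expect{Z[k+1]^2\mid H[k]}\ \le\ Z[k]^2-2s\,Z[k]+v d_0 ,
\]
with $v d_0 = 2v r_{max}+2v(\beta_1+\beta_2)(t_{max}-t_{min})+c^2$, matching \eqref{eq:d0}.

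The final step, which I expect to be the crux, converts this into the $-s/2$ drift bound. By Jensen, $\expect{Z[k+1]\mid H[k]}\le\sqrt{Z[k]^2-2sZ[k]+vd_0}$; since $\lambda\ge s/2$ the target $Z[k]-s/2$ is nonnegative, so it suffices that $Z[k]^2-2sZ[k]+vd_0\le (Z[k]-s/2)^2$, which simplifies to $Z[k]\ge vd_0/s - s/4$ — exactly the content of $Z[k]\ge\lambda$ in view of \eqref{eq:lambda}. The subtlety here is that the cruder pathwise estimate $Z[k+1]-Z[k]\le (Z[k+1]^2-Z[k]^2)/(2Z[k])$ would only yield a $-s/2$ drift for $Z[k]\ge vd_0/s$; it is the $\sqrt{\cdot}$ route via Jensen that sharpens the threshold to $vd_0/s-s/4$, which is where the $-s/4$ term in the definition \eqref{eq:lambda} of $\lambda$ originates. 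A minor additional care point is the closure/limiting argument needed to invoke Lemma \ref{lem:1}a at the Slater point, which lies in $\overline{\Gamma}$ rather than in $\Gamma$.
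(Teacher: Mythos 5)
Your proposal is correct and follows essentially the same route as the paper's proof: nonexpansion of the componentwise projection plus the reverse triangle inequality for \eqref{eq:Zc}; the squared-norm one-step bound combined with the row-selection optimality inequality \eqref{eq:analysis1}, the Slater vector via Lemma~\ref{lem:1}a, the deterministic bound $J[k]\le v(\beta_1+\beta_2)$, and Jensen to descend from a second-moment drift inequality to the first-moment drift \eqref{eq:norm-bound}; the threshold computation $Z[k]\ge vd_0/s-s/4$ is exactly the paper's completion-of-square step written out as an inequality. The only difference is presentational, and your remark about the closure/limiting argument at the Slater point (which lies in $\overline{\Gamma}$, whereas Lemma~\ref{lem:1}a is stated for $\Gamma$) is in fact a small detail the paper's own proof of this theorem glosses over, though it is handled by the same limiting argument used in the preceding drift-plus-penalty lemma.
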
 
\begin{proof} 
Fix $k \in \{1, 2, 3, \ldots\}$.  To prove \eqref{eq:Zc}, we have 
\begin{align}
Z[k+1] &=  \norm{Q[k+1]} \nonumber\\
&\overset{(a)}{\leq} \norm{Q[k]+Y[k]} \nonumber\\
&\leq \norm{Q[k]} + \norm{Y[k]} \nonumber\\
&\leq Z[k] + c \label{eq:mevan1} 
\end{align}
where inequality (a) holds by the queue update \eqref{eq:q-update} and the nonexpansion property of projections; the 
final inequality uses $\norm{Y[k]}\leq c$ from \eqref{eq:bound3}. 
Similarly, 
\begin{align}
\norm{Q[k+1]-Q[k]}^2 &= \sum_{i=1}^n (Q_i[k+1]-Q_i[k])^2\nonumber\\
&\overset{(a)}{=}\sum_{i=1}^n([Q_i[k]+Y_i[k]]_0^{q_iv} - [Q_i[k]]_0^{q_iv})^2\nonumber\\
&\overset{(b)}{\leq}\sum_{i=1}^n(Q_i[k]+Y_i[k]-Q_i[k])^2\nonumber\\
&=\norm{Y[k]}^2 \nonumber\\
&\overset{(c)}{\leq} c^2 \label{eq:mevan3} 
 \end{align}
where (a) holds by substituting the definition of $Q_i[k+1]$ from \eqref{eq:q-update} and the fact $Q_i[k]=[Q_i[k]]_0^{v_iq}$; (b) holds by the nonexpansion property of projections; (c) holds because $\norm{Y[k]}\leq c$ from  \eqref{eq:bound3}.
Furthermore 
\begin{align}
Z[k+1] &= \norm{Q[k+1]} \nonumber\\
&\geq \norm{Q[k]} - \norm{Q[k+1]-Q[k]} \nonumber\\
&\geq Z[k] - c \label{eq:mevan2}
\end{align}
where the final inequality holds by \eqref{eq:mevan3}. 
The inequalities \eqref{eq:mevan1} and \eqref{eq:mevan2} together prove  \eqref{eq:Zc}.

We now prove \eqref{eq:norm-bound}. The case $Z[k]<\lambda$ follows immediately from \eqref{eq:Zc}. It suffices to consider $Z[k]\geq \lambda$. The queue update \eqref{eq:q-update} ensures
\begin{equation} \label{eq:q-implies}
\norm{Q[k+1]}^2  \leq  \norm{Q[k]}^2 + c^2 + 2Q[k]^{\top}Y[k]
\end{equation}
The Slater condition holds and so there is a decision vector  $(T^*[k], R^*[k], Y^*[k]) \in Row(A[k])$ that satisfies (with prob 1):
\begin{equation} \label{slater-consequence}
\expect{(T^*[k], R^*[k], Y^*[k])|H[k]} = (t^s, r^s, y^s) 
\end{equation} 
By \eqref{eq:analysis1} we have 
\begin{align*}
-vR[k] + J[k]T[k] + Q[k]^{\top}Y[k] \leq -vR^*[k] + J[k]T^*[k] + Q[k]^{\top}Y^*[k]
\end{align*}
Multiplying the above inequality by $2$ and  rearranging terms gives 
\begin{align*}
2Q[k]^{\top}Y[k] &\leq 2v(R[k]-R^*[k]) + 2J[k](T^*[k]-T[k])+ 2Q[k]^{\top}Y^*[k]\\
&\leq 2vr_{max} + 2v(\beta_1+\beta_2)(t_{max}-t_{min}) + 2Q[k]^{\top}Y^*[k]
\end{align*}
where the final inequality uses $J[k]\leq v(\beta_1+\beta_2)$.  Substituting this into the right-hand-side
of \eqref{eq:q-implies} gives 
\begin{align*}
\norm{Q[k+1]}^2 &\leq  \norm{Q[k]}^2 +c^2 + 2vr_{max} + 2v(\beta_1+\beta_2)(t_{max}-t_{min}) + 2Q[k]^{\top}Y^*[k]\\
&= \norm{Q[k]}^2 + vd_0 + 2Q[k]^{\top}Y^*[k]\\
&= Z[k]^2 + vd_0 + 2Q[k]^{\top}Y^*[k]
\end{align*}
where $d_0$ is defined in \eqref{eq:d0}. 
Taking conditional expectations of both sides and using \eqref{slater-consequence} gives (with prob 1)
\begin{align*}
\expect{\norm{Q[k+1]}^2 |H[k]} &\leq Z[k]^2 +  vd_0 + 2Q[k]^{\top}y^s\\
&\overset{(a)}{\leq} Z[k]^2+ vd_0 - 2s \sum_{i=1}^n Q_i[k] \\
&\overset{(b)}{\leq} Z[k]^2+ vd_0 - 2s Z[k] 
\end{align*}
where inequality (a) holds by \eqref{eq:Slater}; inequality (b) holds by the triangle inequality 
$$Z[k]=\norm{Q[k]} \leq \sum_{i=1}^nQ_i[k]$$
Jensen's inequality and the definition $Z[k+1]=\norm{Q[k+1]}$ gives 
$$ \expect{Z[k+1]|H[k]}^2 \leq \expect{\norm{Q[k+1]}^2|H[k]} $$ 
Substituting this into the previous inequality gives 
\begin{align*}
\expect{Z[k+1]|H[k]}^2 &\leq  Z[k]^2 + vd_0 - 2sZ[k]\\
&= (Z[k]-s/2)^2 - s^2/4 + vd_0 - sZ[k]\\
&\overset{(a)}{\leq} (Z[k]-s/2)^2 - s^2/4 + vd_0-s\lambda \\
&\overset{(b)}{\leq} (Z[k]-s/2)^2
\end{align*}
where (a) holds because we assume $Z[k]\geq \lambda$; (b) holds because $\lambda \geq vd_0/s - s/4$ by definition of $\lambda$ in \eqref{eq:lambda}. Definition of $\lambda$ also implies $\lambda \geq s/2$. Since $Z[k]\geq \lambda \geq s/2$ we can take square roots to obtain 
$$ \expect{Z[k+1]|H[k]} \leq Z[k]-s/2$$
\end{proof} 

The above theorem is in the form required of Lemma 4 in \cite{neely-energy-convergence-ton} and so we obtain the following corollary:  

\begin{cor} \label{cor:1} Assume the Slater condition \eqref{eq:Slater} holds for some $s>0$ and vector $(t^s, r^s, y^s) \in \overline{\Gamma}$.  
Then for all $k_0 \in \{1, 2, 3, \ldots\}$ and all $z_0 \in \times_{i=1}^n [0, vq_i]$, given $Z[k_0]=z_0$, 
our algorithm gives (with probability 1):
\begin{equation} \label{eq:cor}
\expect{e^{\eta Z[k]}|H[k_0]} \leq d + (e^{\eta z_0}-d)\rho^{k-k_0} \quad \forall k \in \{k_0, k_0+ 1, k_0+2,  \ldots\}
\end{equation} 
where 
\begin{align}
\eta &=\frac{s/2}{c^2 + cs/6}\label{eq:eta}\\
\rho &= 1 - \eta s/4 \label{eq:rho}\\
d &= \frac{(e^{\eta c}  -\rho)e^{\eta \lambda}}{1-\rho} \label{eq:d}
\end{align}
where $\lambda$ is given in \eqref{eq:lambda}. Further, it holds that $s/2\leq c$, $e^{\eta c} \leq \rho$, and 
$0<\rho<1$.  
\end{cor}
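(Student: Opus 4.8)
The statement is a drift-to-tail (Hajek-type) result, so the plan is to (i) convert the one-step drift conditions of Theorem~\ref{thm:constraint1} into a one-step inequality for the exponentiated process $e^{\eta Z[k]}$, and then (ii) iterate that inequality. The hypotheses I would extract are exactly: $Z[k]\ge 0$; the sure increment bound $|Z[k+1]-Z[k]|\le c$ from \eqref{eq:Zc}; and the conditional drift bound \eqref{eq:norm-bound}, i.e.\ $\expect{Z[k+1]-Z[k]|H[k]}\le -s/2$ when $Z[k]\ge\lambda$ and $\le c$ otherwise. Since $(Q[k],Z[k])$ is $H[k]$-measurable, all of these may be invoked inside a conditional expectation given $H[k]$, and $Z[k]$ factors out of such an expectation.

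The key step is a bound on the conditional moment generating function of the increment $D[k]:=Z[k+1]-Z[k]$. I would use the elementary inequality $e^{x}\le 1+x+\dfrac{x^{2}/2}{1-|x|/3}$, valid for $|x|<3$ (it follows from $e^{x}-1-x=\tfrac{x^{2}}{2}\sum_{i\ge 0}\tfrac{2}{(i+2)!}x^{i}$ together with $\tfrac{2}{(i+2)!}\le 3^{-i}$ for all $i\ge 0$). Applying it with $x=\eta D[k]$ — legitimate because $|\eta D[k]|\le\eta c<3$, which holds once we note $s\le|y^s_i|\le\norm{y^s}\le c$ by \eqref{eq:bound3} and hence $\eta c=\tfrac{(s/2)c}{c^{2}+cs/6}\le\tfrac{s}{2c}\le\tfrac12$ — and taking $\expect{\cdot|H[k]}$ gives, on the event $Z[k]\ge\lambda$,
\begin{equation*}
\expect{e^{\eta D[k]}|H[k]}\ \le\ 1-\frac{\eta s}{2}+\frac{\eta^{2}c^{2}/2}{1-\eta c/3}.
\end{equation*}
The definition $\eta=\dfrac{s/2}{c^{2}+cs/6}$ in \eqref{eq:eta} is precisely the value for which $\dfrac{\eta^{2}c^{2}/2}{1-\eta c/3}=\dfrac{\eta s}{4}$, so the right-hand side equals $1-\eta s/4=\rho$. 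On the complementary event $Z[k]<\lambda$ I would use only $D[k]\le c$ and $Z[k]<\lambda$ to get $\expect{e^{\eta D[k]}|H[k]}\le e^{\eta c}$ and $e^{\eta Z[k]}\le e^{\eta\lambda}$. Writing $\expect{e^{\eta Z[k+1]}|H[k]}=e^{\eta Z[k]}\expect{e^{\eta D[k]}|H[k]}$ and combining the two cases yields the single inequality
\begin{equation*}
\expect{e^{\eta Z[k+1]}|H[k]}\ \le\ \rho\,e^{\eta Z[k]}+e^{\eta\lambda}\bigl(e^{\eta c}-\rho\bigr)\ =\ \rho\,e^{\eta Z[k]}+(1-\rho)d,
\end{equation*}
which is where the constant $d=\dfrac{(e^{\eta c}-\rho)e^{\eta\lambda}}{1-\rho}$ of \eqref{eq:d} enters (note $e^{\eta c}>1>\rho$, so $d>0$).

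It then remains to iterate. For $k\ge k_{0}$ we have $H[k_{0}]\subseteq H[k]$, so applying $\expect{\cdot|H[k_{0}]}$ and the tower property gives $\phi[k+1]\le\rho\,\phi[k]+(1-\rho)d$ for $\phi[k]:=\expect{e^{\eta Z[k]}|H[k_{0}]}$, hence $\phi[k+1]-d\le\rho(\phi[k]-d)$. Since $Z[k_{0}]=z_{0}$ is given (and $H[k_{0}]$-measurable), $\phi[k_{0}]=e^{\eta z_{0}}$, and unrolling the recursion gives $\phi[k]\le d+(e^{\eta z_{0}}-d)\rho^{k-k_{0}}$, which is \eqref{eq:cor}. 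The auxiliary facts then follow directly: $s/2\le c$ from $s\le c$ above; $0<\rho<1$ because $\eta s/4=\tfrac{s^{2}/8}{c^{2}+cs/6}<1$; and the stated bound on $e^{\eta c}$ from $\eta c\le s/(2c)\le 1/2$.

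The step that requires the most care is the calibration in the middle paragraph: a crude bound such as $e^{\eta x}\le 1+\eta x+\tfrac{(\eta c)^{2}}{2}e^{\eta c}$ is too lossy to close the argument with the stated $\eta$, so one must use the sharper inequality with the $1-|x|/3$ denominator and then check $\eta c<3$ (equivalently $s\le c$) so that it applies; everything else is routine bookkeeping. Alternatively, one may simply observe that Theorem~\ref{thm:constraint1} matches the hypotheses of Lemma~4 in \cite{neely-energy-convergence-ton} and quote its conclusion verbatim.
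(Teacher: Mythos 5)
Your proof is correct, and it takes a genuinely different (more self-contained) route than the paper. The paper's entire proof is the one-line citation ``This follows by applying Lemma~4 in \cite{neely-energy-convergence-ton} to the result of Theorem~\ref{thm:constraint1}.'' You instead reconstruct that external drift-to-tail lemma from scratch: the MGF inequality $e^{x}\le 1+x+\tfrac{x^{2}/2}{1-|x|/3}$, the calibration check showing that $\eta=\tfrac{s/2}{c^{2}+cs/6}$ makes $\tfrac{\eta^{2}c^{2}/2}{1-\eta c/3}=\tfrac{\eta s}{4}$, the case split on $Z[k]\gtrless\lambda$ leading to $\expect{e^{\eta Z[k+1]}|H[k]}\le\rho e^{\eta Z[k]}+(1-\rho)d$, and the geometric unrolling through the tower property. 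The requisite boundedness ($\eta c\le s/(2c)\le 1/2<3$, using $s\le c$ via $s\le|y^{s}_{i}|\le\norm{y^{s}}\le c$) is checked. This is exactly what the cited external lemma does internally, so the two routes buy different things: the paper's one-liner keeps the exposition short but outsources the real content, while yours makes the corollary self-contained and makes the role of each constant ($\eta$, $\rho$, $d$, $\lambda$) transparent. You even note the citation alternative at the end, so you have effectively given both proofs.

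One small wrinkle worth flagging: the corollary as stated asserts ``$e^{\eta c}\le\rho$,'' but your own derivation correctly shows $e^{\eta c}>1>\rho$, and indeed $d>0$ relies on $e^{\eta c}-\rho>0$. The literal inequality in the corollary statement appears to be a typo in the paper (most likely for something like $\eta c\le 1/2$ or $e^{\eta c}\le e^{1/2}$). Your closing line ``and the stated bound on $e^{\eta c}$ from $\eta c\le s/(2c)\le 1/2$'' quietly papers over this: what you actually establish is $\eta c\le 1/2$, not $e^{\eta c}\le\rho$. It would be cleaner to say explicitly that the stated $e^{\eta c}\le\rho$ cannot be right and give the corrected form. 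This does not affect the proof of \eqref{eq:cor} itself, which is sound.
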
 

\begin{proof} 
This follows by applying Lemma 4 in \cite{neely-energy-convergence-ton} to the result of Theorem \ref{thm:constraint1}.
\end{proof}

\

The above corollary implies that the distribution of $Z[k]=\norm{Q[k]}$ decays exponentially fast.
This is useful to ensure the events  $Q_i[k]> vq_i - y_{i,max}$ are rare (meaning $1_i[k]=1$ is rare, which is 
important for satisfying the constraints because of Lemma \ref{lem:vq-bounds}). 
It suffices to choose $q=(q_1, \ldots, q_n)$ as a constant that does not scale with the parameter $v$, but that is 
suitably large.  For simplicity we choose $q_i$ to be the same for all $i\in\{1, \ldots, n\}$. 

\

\begin{thm} \label{thm:constraint2} Assume the Slater condition \eqref{eq:Slater} holds for some $s>0$ and vector $(t^s, r^s, y^s) \in \overline{\Gamma}$. Fix $q_1\geq 2d_0/s$ and fix $q_i=q_1$ for $i \in \{1, \ldots, n\}$.  
 Fix $\epsilon>0$.  
Assume $v= \max\{1/\epsilon, \frac{3s^2}{4d_0}\}$. 
For all $i \in \{1, \ldots, n\}$, all $k_0\in \{1, 2, 3, \ldots\}$, and all $m\geq \lceil\frac{4vq_1\sqrt{n}}{s}\rceil + v^2$ we have (with probability 1):
$$ \frac{1}{m}\sum_{k=k_0}^{k_0+m-1} \expect{Y_i[k]|H[k_0]} \leq O(\epsilon)$$
\end{thm}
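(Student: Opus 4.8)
The plan is to combine the virtual-queue bound of Lemma~\ref{lem:vq-bounds} with the exponential tail bound of Corollary~\ref{cor:1}. Taking $\expect{\cdot|H[k_0]}$ of the pathwise inequality \eqref{eq:vq1} and using $q_i=q_1$ for all $i$ gives
\begin{equation*}
\frac{1}{m}\sum_{k=k_0}^{k_0+m-1}\expect{Y_i[k]|H[k_0]} \le \frac{q_1 v}{m} + \frac{y_{i,max}}{m}\sum_{k=k_0}^{k_0+m-1}\expect{1_i[k]|H[k_0]}.
\end{equation*}
Since $m\ge v^2$ and $v\ge 1/\epsilon$, the first term is at most $q_1/v\le q_1\epsilon = O(\epsilon)$, so the whole problem reduces to showing that $\frac{1}{m}\sum_{k=k_0}^{k_0+m-1}\expect{1_i[k]|H[k_0]}$ is $O(\epsilon)$.

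For the indicator term, the key observation is that $1_i[k]=1$ forces $Z[k]=\norm{Q[k]}\ge Q_i[k] > q_1 v - y_{i,max}$, so by a Chernoff/Markov step with the constant $\eta$ from Corollary~\ref{cor:1},
\begin{equation*}
\expect{1_i[k]|H[k_0]} \le \Pr[\,Z[k] > q_1 v - y_{i,max} \,|\, H[k_0]\,] \le e^{-\eta(q_1 v - y_{i,max})}\,\expect{e^{\eta Z[k]}|H[k_0]}.
\end{equation*}
I would then split $\{k_0,\dots,k_0+m-1\}$ at $m_0 := \lceil 4vq_1\sqrt{n}/s\rceil$. On the first $m_0$ tasks I use the trivial bound $1_i[k]\le 1$, which contributes at most $y_{i,max}m_0/m \le y_{i,max}m_0/v^2$; since $m_0=\Theta(v)$ and $m\ge m_0+v^2$, this is $O(1/v)=O(\epsilon)$. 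For $k-k_0\ge m_0$, Corollary~\ref{cor:1} gives $\expect{e^{\eta Z[k]}|H[k_0]}\le d + e^{\eta z_0}\rho^{k-k_0}$ with $z_0 = Z[k_0]\le v\norm{q} = vq_1\sqrt{n}$; using $\rho\le e^{-\eta s/4}$ together with $m_0\ge 4vq_1\sqrt{n}/s$, the transient satisfies $e^{\eta z_0}\rho^{k-k_0}\le e^{\eta v q_1\sqrt{n} - \eta s m_0/4}\le 1$, hence $\expect{e^{\eta Z[k]}|H[k_0]}\le d+1$ on that range.

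Putting these together, the tail part of the indicator average is bounded by $y_{i,max}(d+1)e^{-\eta(q_1 v - y_{i,max})}$, and the last step is to verify this is $O(\epsilon)$. From $d = \frac{(e^{\eta c}-\rho)e^{\eta\lambda}}{1-\rho}\le \frac{e^{\eta c}}{1-\rho}e^{\eta\lambda}$ and $\lambda\le vd_0/s + s/2$ (see \eqref{eq:lambda}), one gets $d\le C\, e^{\eta v d_0/s}$ for a constant $C$ independent of $v$; the hypothesis $q_1\ge 2d_0/s$ then gives $d\,e^{-\eta q_1 v}\le C\, e^{\eta v(d_0/s - q_1)}\le C\, e^{-\eta v d_0/s}$, which by $e^{-x}\le 1/x$ and $v\ge 1/\epsilon$ is $O(\epsilon)$; the leftover $e^{-\eta q_1 v}$ factor is handled identically. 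Summing the three $O(\epsilon)$ contributions proves the theorem. I expect the main obstacle to be exactly this exponent bookkeeping: one must check that the exponential growth of $d$ in $v$ — entering through $\lambda\sim vd_0/s$, which is also why the hypothesis $v\ge 3s^2/(4d_0)$ selects the $vd_0/s - s/4$ branch of $\lambda$ in \eqref{eq:lambda} — is dominated by the decay $e^{-\eta q_1 v}$, the factor-of-two slack in $q_1\ge 2d_0/s$ being precisely what supplies the margin; a secondary point is choosing the split index $m_0$ large enough to absorb the worst-case initial queue norm $z_0\le vq_1\sqrt{n}$.
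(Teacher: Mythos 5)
Your proposal is correct and follows essentially the same route as the paper: split the task block at $k_1=\lceil 4vq_1\sqrt{n}/s\rceil$, bound the first part trivially, apply the indicator-to-exponential comparison and Corollary~\ref{cor:1} on the tail, and then verify the exponent bookkeeping $d\,e^{-\eta q_1 v}\le O(\epsilon)$ via $\lambda\le vd_0/s$ and $q_1\ge 2d_0/s$. The only cosmetic difference is that you collapse the transient to $\expect{e^{\eta Z[k]}|H[k_0]}\le d+1$ on the tail range, whereas the paper carries the term $e^{\eta z_0}\rho^{k-k_0}$ forward, sums the geometric series to $e^{\eta z_0}\rho^{k_1}/(1-\rho)$, and shows that this piece is $O(\epsilon^2)$ after dividing by $m$ — both are valid and give the same conclusion.
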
 

\begin{proof} 
Since queues are bounded, we have $Q_i[k_0] \in [0, vq_i]$ for all $i \in\{1, \ldots, n\}$, and so 
$$ \mbox{$\norm{Q[k_0]}\leq v\sqrt{\sum_{i=1}^nq_i^2} = vq_1\sqrt{n}$}$$
Fix $z_0=\norm{Q[k_0]}$ and note that $z_0\in [0, vq_1\sqrt{n}]$. 
Define $k_1 = \lceil\frac{4vq_1\sqrt{n}}{s}\rceil$. 
Fix $i \in \{1, \ldots, n\}$, $k_0 \in \{1, 2, 3, \ldots\}$, $m \geq k_1 + v^2$. 
From \eqref{eq:vq1} and the fact $q_i=q_1$ we have 
\begin{align*}
\frac{1}{m}\sum_{k=k_0}^{k_0+m-1}Y_i[k] &\leq \frac{q_1v}{m} + \frac{y_{i,max}}{m}\sum_{k=k_0}^{k_0+k_1-1} 1_i[k] +  \frac{y_{i,max}}{m}\sum_{k=k_0+k_1}^{k_0+m-1} 1_i[k]\\
&\leq \frac{q_1v}{m} + \frac{y_{i,max}k_1}{m} + \frac{y_{i,max}}{m}\sum_{k=k_0+k_1}^{k_0+m-1} 1_i[k]
\end{align*}
Since $q_1$ is a $\Theta(1)$ constant that does not scale with $\epsilon$, and since $m\geq k_1+v^2$, we have 
\begin{align*}
&\frac{q_1v}{m} \leq O(\epsilon) \\
 &\frac{y_{i,max}k_1}{m} \leq O(\epsilon) 
\end{align*}
It suffices to show 
\begin{equation} \label{eq:suff1}
\frac{1}{m}\sum_{k=k_0+k_1}^{k_0+m-1}\expect{1_i[k]|H[k_0]} \leq O(\epsilon)
\end{equation} 
To this end, observe by definition of $1_i[k]$ in \eqref{eq:one-i} 
$$e^{\eta (q_iv - y_{i,max})} 1_i[k] \leq e^{\eta Q_i[k]} \leq e^{\eta Z[k]} $$
Taking conditional expectations and using \eqref{eq:cor} gives for all $k\geq k_0+k_1$: 
\begin{align*}
e^{\eta (q_iv - y_{i,max})}\expect{1_i[k]|H[k_0]} &\leq d + (e^{\eta z_0} - d)\rho^{k-k_0} \\
&\leq d + e^{\eta z_0} \rho^{k-k_0}
\end{align*}
Summing over the (fewer than $m$) terms and dividing by $m$ gives 
\begin{align*}
e^{\eta (q_iv - y_{i,max})}\frac{1}{m}\sum_{k=k_0+k_1}^{k_0+m-1} \expect{1_i[k]|H[k_0]} &\leq d + \frac{e^{\eta z_0}\rho^{k_1}}{m}\sum_{k=k_0+k_1}^{k_0+m-1} \rho^{k-k_1-k_0}\\
&\leq d + \frac{e^{\eta z_0}\rho^{k_1}}{m}\frac{1}{1-\rho}
\end{align*}
Recall that $q_i=q_1$ for all $i$. To show \eqref{eq:suff1} it suffices to show 
\begin{align}
&d e^{-\eta (q_1v - y_{i,max})} \leq O(\epsilon) \label{eq:suff2}\\
&e^{-\eta (q_1v - y_{i,max})}  \frac{e^{\eta z_0}\rho^{k_1}}{(1-\rho)m} \leq O(\epsilon) \label{eq:suff3}
\end{align}
In fact we show both these terms are \emph{much smaller} than $O(\epsilon)$. 

By assumption, $v\geq \frac{3s^2}{4d_0}$ and so  from \eqref{eq:lambda}
\begin{equation} \label{eq:lambda-c}
\lambda = \frac{vd_0}{s} - \frac{s}{4}
\end{equation} 
By definition of $d$ in \eqref{eq:d}: 
\begin{align*}
de^{-\eta (q_1v - y_{i,max})} &= \frac{(e^{\eta c} - \rho)e^{\eta \lambda}}{1-\rho} e^{\eta y_{i,max}}e^{-\eta q_1v}\\
&\overset{(a)}{\leq} \frac{1}{1-\rho} e^{\eta(c + \lambda + y_{i,max} - q_1v)} \\
&\overset{(b)}{=} \frac{1}{1-\rho} e^{\eta(c + vd_0/s - s/4 + y_{i,max} - q_1v)} \\
&=\left(\frac{e^{\eta (y_{i,max} + c-s/4)}}{1-\rho}\right)e^{-\eta v(q_1-d_0/s)}\\
&\overset{(c)}{\leq}\left(\frac{e^{\eta (y_{i,max} + c-s/4)}}{1-\rho}\right)e^{-\eta vd_0/s}\\
&\overset{(d)}{\leq} \left(\frac{e^{\eta (y_{i,max} + c-s/4)}}{1-\rho}\right)e^{-(\eta d_0/s)/\epsilon}\\
&\overset{(e)}{\leq} O(e^{-(\eta d_0/s)/\epsilon})\\
&\leq O(\epsilon) 
\end{align*}
where (a) holds because $0<\rho<1$ (recall Corollary \ref{cor:1}); 
(b) holds by \eqref{eq:lambda-c}; 
(c) holds because $q_1\geq 2d_0/s$; (d) holds because $v\geq 1/\epsilon$; (e) holds 
because  $y_{i,max},\eta, c, s, \rho$ are all $\Theta(1)$ constants that do not scale with $\epsilon$. The term goes to zero exponentially fast as $\epsilon\rightarrow 0$, much faster than $O(\epsilon)$. This proves 
  \eqref{eq:suff2}.
  
 To show \eqref{eq:suff3}, we have 
 \begin{align}
 e^{-\eta (q_1v - y_{i,max})}\frac{e^{\eta z_0}\rho^{k_1}}{(1-\rho)m}&\leq \frac{e^{\eta y_{i,max}}}{(1-\rho)m} e^{\eta z_0}\rho^{k_1}\nonumber\\
 &=\frac{e^{\eta y_{i,max}}}{(1-\rho)m} e^{\eta z_0 + k_1\log(\rho)}\label{eq:obs4}
 \end{align}
By definition of $\rho$ in \eqref{eq:rho} we have 
\begin{align*}
k_1 \log(\rho) &= k_1 \log(1-\eta s/4) \\
&\leq-k_1 \eta s/4
\end{align*}
which uses the fact $\log(1+x)\leq x$ for all $x>-1$ (recall from Corollary \ref{cor:1} that $\eta s/4<1$). 
Adding $\eta z_0$ to both sides gives 
\begin{align*}
\eta z_0 + k_1\log(\rho) &\leq \eta(z_0 - k_1s/4) \\
&\overset{(a)}{\leq} \eta (vq\sqrt{n} - k_1s/4)\\
&\overset{(b)}{\leq} 0
\end{align*}
 where (a) holds because $z_0^2 \leq n(vq)^2$; (b) holds by definition of $k_1$. Substituting the above inequality into \eqref{eq:obs4}
 gives 
 \begin{align*}
 e^{-\eta (q_1v - y_{i,max})}\frac{e^{\eta z_0}\rho^{k_1}}{(1-\rho)m}&\leq \frac{e^{\eta y_{i,max}}}{(1-\rho)m} \\
 &\leq \frac{e^{\eta y_{i,max}}}{(1-\rho)(k_1+v^2)} \\
 &\leq O(\epsilon^2)\\
 &\leq O(\epsilon)
  \end{align*}
  where we have used $v\geq 1/\epsilon$.
\end{proof} 

\subsection{Discussion} 

The case $n=0$ has no penalties $Y_i[k]$ and the algorithm uses a single parameter $v>0$ that is scaled (using $v=1/\epsilon$) for a tradeoff between adaptation time and proximity to the optimal solution. When $n>0$ the algorithm has a parameter $v>0$ and another parameter $q_1>0$.  Theorem \ref{thm:constraint2} suggests $q_1\geq 2d_0/s$. This requires rough knowledge of $d_0/s$, where $s$ is the Slater parameter.  In practice there is little danger in choosing $q_1$ to be too large. Indeed, even choosing $q_1=\infty$ works well in practice. Intuitively, this is because the virtual queue update \eqref{eq:q-update} for $q_1=\infty$  reduces to 
$$Q_i[k+1] = \max\left\{Q_i[k] + Y_i[k], 0\right\} \quad \forall k \in \{1, 2, 3, \ldots\}$$
which means $1_i[k]=0$ for all $k$ and the inequality \eqref{eq:vq1} can be modified to 
\begin{equation}\label{eq:discuss} 
\frac{1}{m}\sum_{k=k_0}^{k_0+m-1} Y_i[k] \leq \frac{Q_i[k_0+m]-Q_i[k_0]}{m}
\end{equation} 
for all positive integers $k_0, m$.  Intuitively, the Slater condition still ensures a negative drift condition similar to \eqref{eq:norm-bound}, so that $\norm{Q[k]}$ is still concentrated 
so it is rarely much larger than the $\lambda$ parameter in \eqref{eq:vq1}, where $\lambda = \Theta(v)$. Intuitively, while the $J[k]$ virtual queue would no longer be 
deterministically bounded, it would stay within its existing bounds with high probability. Taking expectations of \eqref{eq:discuss} would then produce a right-hand-side proportional to $v/m$, which is $O(\epsilon)$ whenever $v=1/\epsilon$ and $m\geq 1/\epsilon^2$.
We do not pursue this line of analysis because our use of finite $q_i$ values enables strong deterministic bounds on $Q_i[k]$ and $J[k]$. Thus, our analysis over any sequence of tasks $\{k_0, \ldots, k_0+m-1\}$ indeed holds regardless of the history of the system before task $k_0$, including the (rare) cases when the virtual queues hit their upper bound values before task $k_0$.

\begin{figure}[t]
   \centering
   \includegraphics[width=4in]{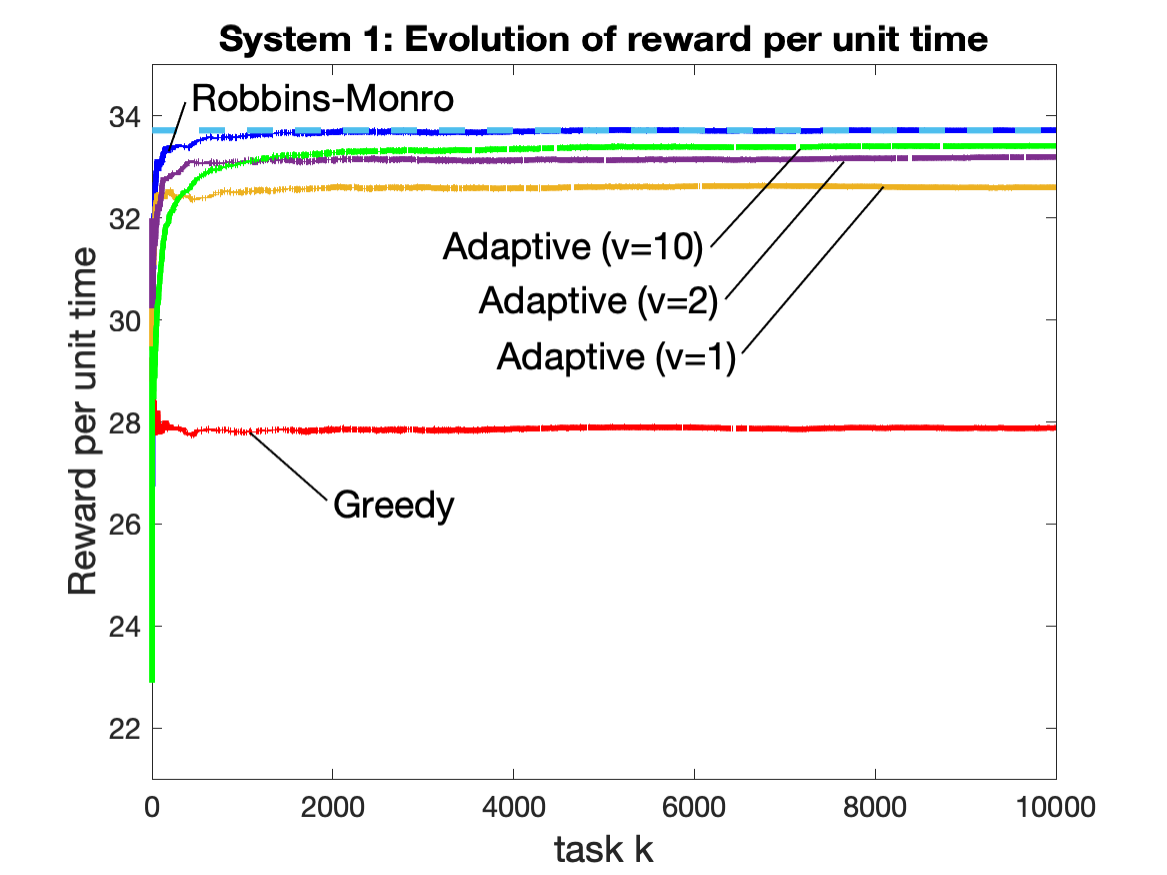} 
   \caption{System 1: Accumulated reward per unit time for the proposed adaptive algorithm (with $v\in\{1,2,10\}$), the vanishing-stepsize Robbins-Monro algorithm; and the greedy algorithm. All data points are averaged over $40$ independent simulations.}
   \label{fig:1}
\end{figure}

\begin{figure}[t]
   \centering
   \includegraphics[width=4in]{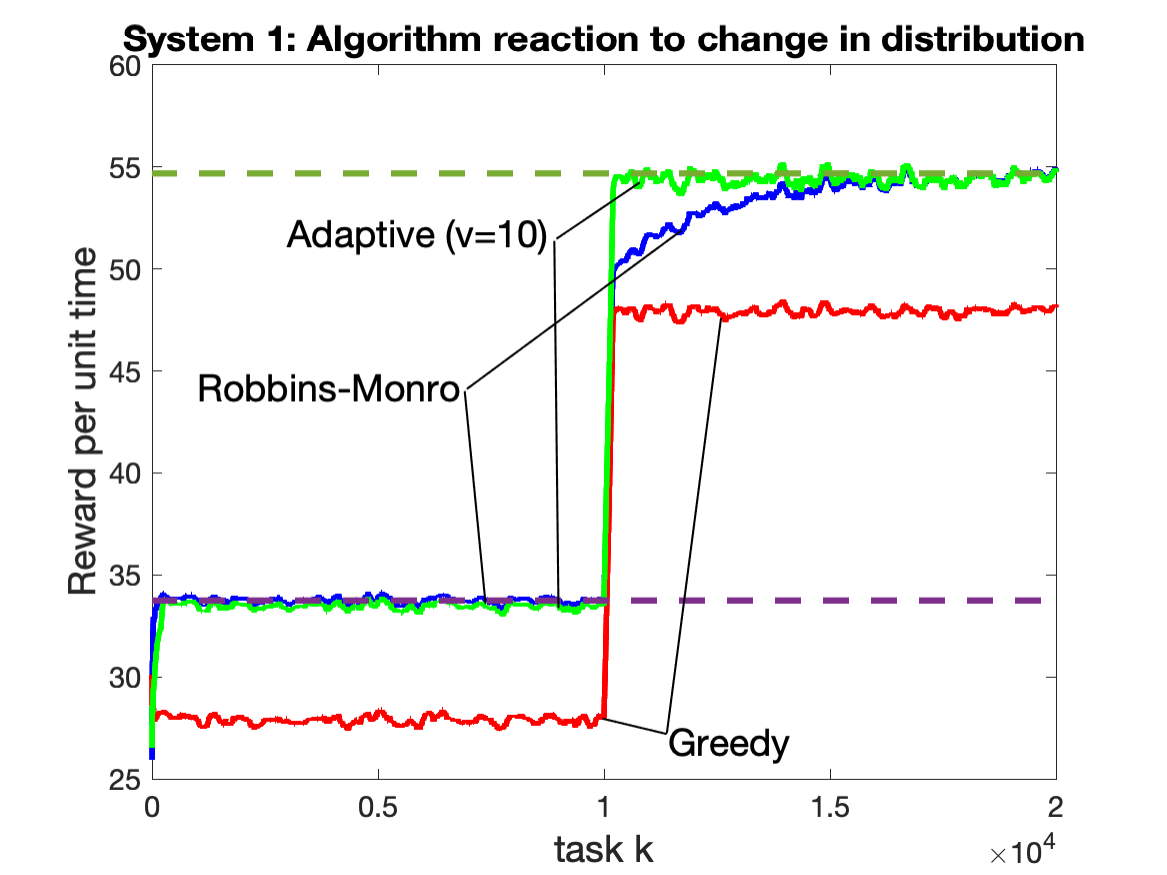} 
   \caption{System 1: Testing adaptation over a simulation of $2\times 10^4$ tasks with a distributional change introduced at the halfway point (task $10^4$). The two horizontal dashed lines represent optimal $\theta^*$ values for the two distributions. Each point for task $k_0$ is the result of a moving window average $\frac{\sum_{k=1}^{200}\expect{R[k_0-k]}}{\sum_{k=1}^{200}\expect{T[k_0-k]}}$, where expectations are obtained by averaging over $40$ independent simulations. The adaptive algorithm (with $v=10$) quickly adapts to the change. The Robbins-Monro algorithm takes a long time to adapt.}
   \label{fig:2}
\end{figure}

\section{Simulation}

\subsection{System 1} 

This subsection considers sequential project selection with the  
goal of maximizing reward per unit time (with no 
penalty processes $Y_i[k]$). The system is similar to one simulated in  \cite{neely-renewal-jmlr}.  The i.i.d. matrices $\{A[k]\}$ have two columns and a random number of rows. The number of rows is equal to number of project options for task $k$.  Two different distributions for $A[k]$ are considered in the simulations (specified at the end of this subsection). Both distributions have  $t_{min}=1, t_{max}=10, r_{max}=500$.

Fig \ref{fig:1} illustrates results for a simulation over $10^4$ tasks using i.i.d. $\{A[k]\}$ with Distribution 1. The vertical axis in Fig. \ref{fig:1} represents 
the accumulated reward per task starting with task $1$ and running up to the current task $k$: 
$$ \frac{\sum_{j=1}^k\expect{R[j]}}{\sum_{j=1}^k\expect{T[j]}}$$
where the expectations $\expect{R[j]}$ and $\expect{T[k]}$ are approximated by averaging over $40$ independent simulation 
runs. 
 Fig. \ref{fig:1} compares the \emph{greedy} algorithm of always choosing the task $k$ that maximizes the instantaneous $R[k]/T[k]$ value; the (nonadaptive) Robbins-Monro algorithm from \cite{neely-renewal-jmlr} that uses a stepsize $\eta[k]=\frac{1}{k+1}$; the proposed adaptive algorithm for the cases $v=1, v=2, v=10$ (and using $\alpha = c_1/\max[c_2, 1/2]$).  
The dashed horizontal line in Fig. \ref{fig:1} is the optimal $\theta^*$ value corresponding to Distribution 1. The value $\theta^*$ is difficult to calculate analytically, so we use an empirical value  obtained by the final point on the Robbins-Monro curve. It can be seen that the greedy algorithm has significantly worse performance compared to the others. The Robbins-Monro algorithm, which uses a vanishing stepsize, has the fastest convergence and the highest achieved reward per unit time.  As predicted by our theorems, the proposed adaptive 
algorithm has convergence time that gets slower as $v$ is increased, with a corresponding tradeoff in \emph{accuracy}, where accuracy relates to the proximity of the converged value to the optimal $\theta^*$.  The case $v=1$ converges quickly but has less accuracy. The cases $v=2$ and $v=10$ have accuracy that is competitive 
with Robbins-Monro.  

Fig. \ref{fig:2} illustrates the adaptation advantages of the proposed algorithm. Figure \ref{fig:2} considers simulations over $2\times10^4$ tasks.  The first half of the simulation refers to tasks $\{1, \ldots, 10^4\}$, the second half refers to tasks $\{10^4+1, \ldots, 2\times 10^4\}$. The $\{A[k]\}$ matrices in the first half are i.i.d. with Distribution 1; in the second half they are  i.i.d. with Distribution 2.  Nobody tells the algorithms that a change occurs at the halfway mark, rather, the algorithms must adapt.  
The two dashed horizontal lines represent optimal $\theta^*$ values for Distribution 1 and Distribution 2.  Data in Fig. \ref{fig:2} is plotted as a moving average with a window of the past 200 tasks (and averaged over 40 independent simulations). 
As seen in the figure, the adaptive algorithm (with $v=10$) produces near optimal performance that quickly adapts to the change. In stark contrast, 
the Robbins-Monro algorithm adapts very slowly to the change and takes roughly $(3/4)\times 10^4$ tasks to move close to optimality. The adaptation time of Robbins-Monro is much slower than its convergence time starting at task $1$. This is due to the vanishing stepsize  and the fact that, at the time of the distribution change, the stepsize is very small. 
Theoretically, the Robbins-Monro algorithm has an \emph{arbitrarily large} adaptation time, as can be seen by imagining
a simulation that uses a fixed distribution for a number of tasks $x$ before changing to another distribution: The stepsize at the time of change is $\eta[x]=1/(x+1)$, hence an arbitrarily large value of $x$ yields an arbitrarily large adaptation time. 

Fig. \ref{fig:2} shows the greedy algorithm adapts very quickly. This is because the greedy algorithm maximizes $R[k]/T[k]$ for each task $k$ without regard to history. Of course, the greedy algorithm is the least accurate and produces results that are significantly less than optimal for both distributions. To avoid clutter, the adaptive algorithm for 
cases $v=1, v=2$ are not plotted in Fig. \ref{fig:2}.  Only the case $v=10$ is shown because this case has the slowest adaptation but the most accuracy  (as compared to $v=1, v=2$ cases).  While not shown in Fig. \ref{fig:2}, it was observed that the accuracy of the $v=2$ case was only marginally worse than that of the $v=10$ case (similar to Fig. \ref{fig:1}). 

The two distributions used in Fig. \ref{fig:1} and Fig. \ref{fig:2} are:

\begin{itemize} 
\item Distribution 1:  With $M[k]$ being the random number of rows, we use $P[M[k]=1]=0.1$, $P[M[k]=2]=0.6$, $P[M[k]=3]=0.15$, $P[M[k]=4]=0.15$. The first row is always $[T_1, R_1]=[1, 0]$ and represents a ``vacation'' option that lasts for one unit of time and has zero reward (as explained in \cite{neely-renewal-jmlr}, it can be optimal to take vacations a certain fraction of time, even if there are other row options). The remaining rows $r$, if any, have parameters $[T_r, R_r]$ generated independently with $T_r\sim Unif[1,10]$ and $R_r=T_rG_r$ where $G_r \sim Unif[0,50]$ and is independent of $T_r$. 

\item Distribution 2:  We use $P[M[k]=1]=0$, $P[M[k]=2]=0.2$, $P[M[k]=3]= 0.4$, $P[M[k]=4]=0.4$. 
The first row is always $[T_1,R_1]=[1, 0]$. The other rows $r$ are independently chosen as a random vector $[T_r,R_r]$ with $T_r\sim Unif[1,10]$, $R_r=G_rT_r+H_r$ with $G_r, H_r$  independent
and $G_r\sim Unif[10,30]$, $H_r\sim Unif[0,200]$. 
\end{itemize}

\subsection{System 2} 

 \begin{figure}[htbp]
   \centering
   \includegraphics[width=4in]{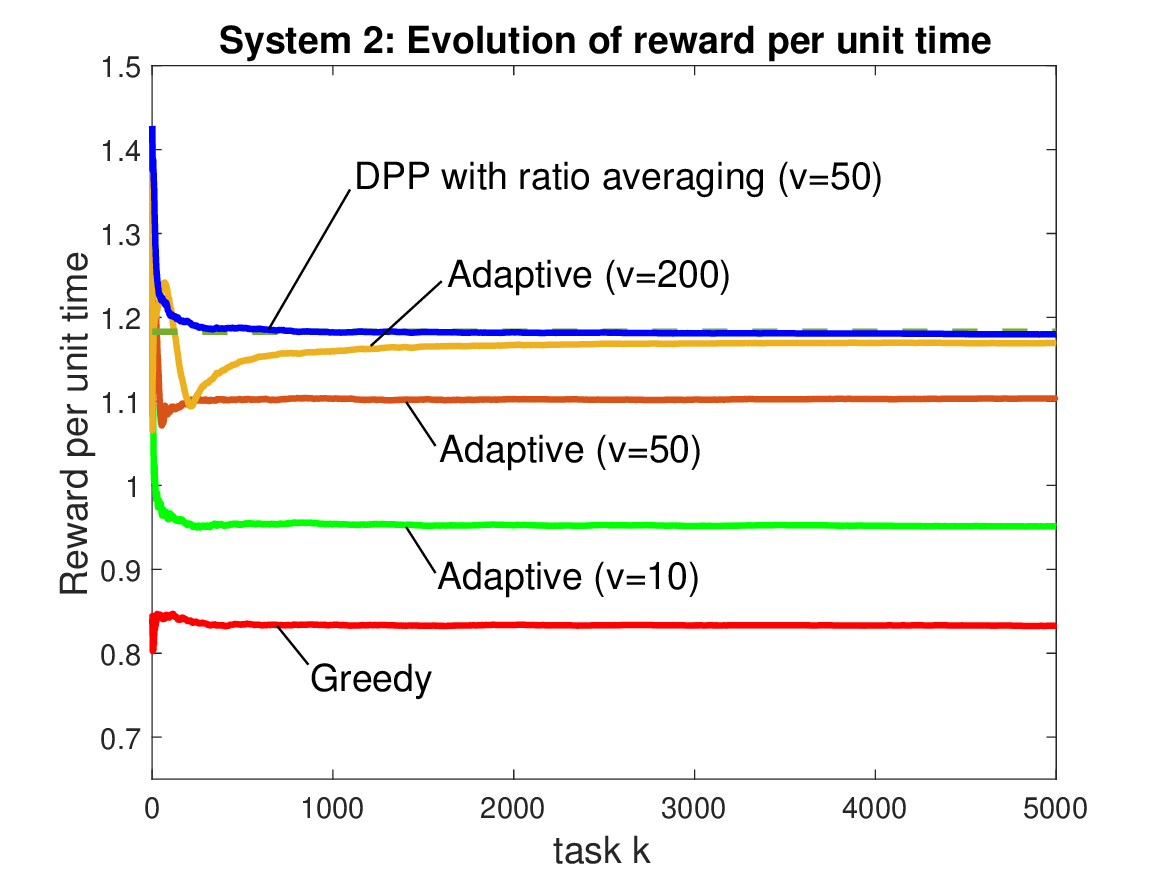} 
   \caption{Time average reward up to task $k$ for the adaptive algorithm ($v\in\{10,50,200\}$); the DPP algorithm with ratio averaging; the greedy algorithm.}
   \label{fig:Rewardnew}
\end{figure}

\begin{figure}[htbp]
   \centering
   \includegraphics[width=4in]{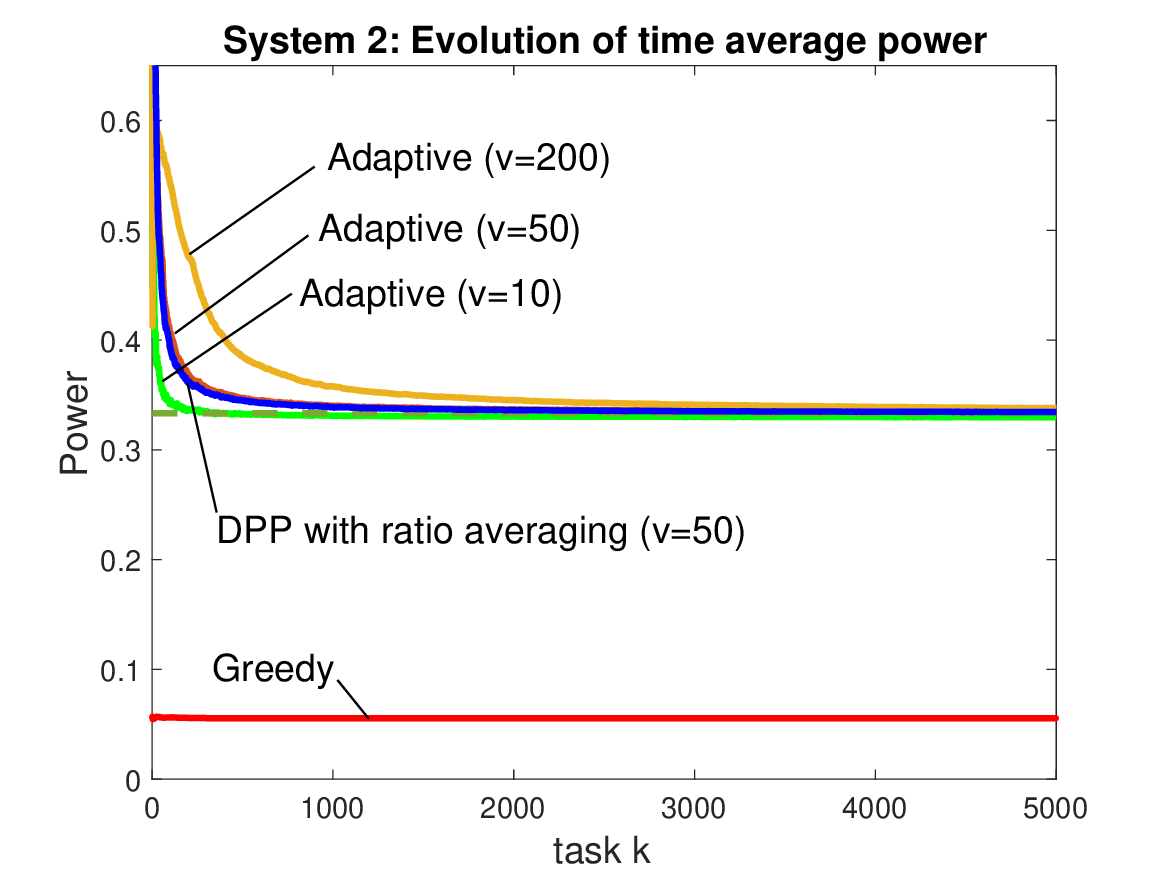} 
   \caption{Corresponding time averaged power for the simulations of Fig. \ref{fig:Rewardnew}. The horizontal asymptote is $p_{av}=1/3$.}
   \label{fig:Powernew}
\end{figure}

This subsection considers a device that processes computational tasks with the goal of maximizing time average profit subject to a time average
power constraint of $p_{av}=1/3$ energy/time. There is a penalty process $Y[k]$ and so  
the Robbins-Monro algorithm of \cite{neely-renewal-jmlr} cannot be used.  We compare the adaptive algorithm of the current
paper the drift-plus-penalty ratio method of \cite{renewal-opt-tac}. The 
ratio of expectations from the main method in \cite{renewal-opt-tac} requires knowledge of the probability distribution on $A[k]$. 
A heuristic is proposed in \cite{renewal-opt-tac} that uses a drift-plus-penalty minimization of $-v(R[k]-\theta[k-1]T[k]) + Q[k]Y[k]$, which has a simple 
decision complexity for each task that is the same as the decision complexity of the adaptive algorithm proposed in the current paper, and where $\theta[k]$ is defined as a running average: 
$$ \theta[k] = \frac{\sum_{i=1}^kR[i]}{\sum_{i=1}^kT[i]}$$
It is argued in \cite{renewal-opt-tac} that, if the heuristic converges, it converges to a point that is within $O(\epsilon)$ of optimality, where the parameter $v$ is chosen as $v=1/\epsilon$. We call this heuristic ``DPP with ratio averaging'' in the simulations.\footnote{Another method described in \cite{neely-renewal-jmlr} approximates the ratio of expectations using a window of $w$ past samples.  The per-task decision 
complexity grows with $w$ and hence is larger than the complexity of the algorithm proposed in the current paper. For ease of implementation, we have not considered this method.}  We also compare to a greedy method that removes any row $r$ of $A[k]$ that does not satisfy $Energy_r[k]/T_r[k]\leq 1/3$, and chooses from the remaining rows to maximize $R_r[k]/T_r[k]$. 

The i.i.d. matrices $\{A[k]\}$ have three columns and three rows of the form: 
$$ A[k] = \left[ \begin{array}{ccc}
1 & 0 & 0 \\
T_2[k] & R_2[k] & Y_2[k]\\
T_3[k] & R_3[k] & Y_3[k]
\end{array}
\right] $$
where $Y_r[k] = Energy_r[k]-(1/3)T_r[k]$ for $i\in\{2,3\}$. The first row corresponds ignoring task $k$ and remaining idle for $1$ unit of time, earning no reward  but using no energy, so $(T_1[k],R_1[k],Y_1[k])=(1,0,0)$. The second row corresponds to processing task $k$ at the home device. The third row corresponds to outsourcing task $k$ to a cloud device.  Two distributions are considered (specified at the end of this subsection). Under Distribution 1 the reward is the same for both rows 2 and 3, but the energy and durations of time are different. Under Distribution 2 the reward is higher for processing at the home device.  Both distributions have $t_{min}=1$, $t_{max}=12$, $r_{max}=20$. We use $\alpha = c_1/\max[c_2, 1/2]$ for the adaptive algorithm. Under the distributions used, the greedy algorithm is never able to use row 2, can always use either row 1 or 3, and always selects row 3.  

Figs. \ref{fig:Rewardnew} and \ref{fig:Powernew} consider reward and power for simulations over $5000$ tasks with i.i.d. $\{A[k]\}$ under Distribution 1. Fig. \ref{fig:Rewardnew} plots the running average of $\frac{\sum_{i=1}^k\expect{R[i]}}{\sum_{i=1}^k\expect{T[i]}}$ where expectations are attained by averaging over $40$ independent simulations. The horizontal asymptote illustrates the optimal $\theta^*$ as obtained by simulation. The simulation uses $v=50$ for the DPP with ratio averaging because this was sufficient for an accurate approximation of $\theta^*$, as seen in Fig. \ref{fig:Rewardnew}. The adaptive algorithm is considered for $v=10, v=50, v=200$. As predicted by our theorems, it can 
be seen that the converged reward is closer to $\theta^*$ as $v$ is increased (the case $v=200$ is competitive with DPP with ratio averaging). 
Fig. \ref{fig:Powernew} plots 
the corresponding running average of $\frac{\sum_{i=1}^k\expect{Energy[i]}}{\sum_{i=1}^k\expect{T[i]}}$.  The disadvantage of choosing a large value of $v$ is seen by the longer time required for time averaged power to converge to the horizontal asymptote $p_{av}=1/3$.  Figs. \ref{fig:Rewardnew} and \ref{fig:Powernew} show the  greedy algorithm has the worst reward per unit time and has average power significantly under the required constraint. This shows that, unlike the other algorithms, the greedy algorithm does not make intelligent decisions for using more power to improve its reward.  Considering only the performance shown in Figs. \ref{fig:Rewardnew} and \ref{fig:Powernew}, the DPP with ratio averaging heuristic demonstrates the best convergence times, which is likely due to the fact that it uses only one virtual queue $Q[k]$ while our adaptive algorithm uses $Q[k]$ and $J[k]$.  It is interesting to note that the adaptive algorithms and the DPP with ratio averaging heuristic both choose row 1 (idle) 
a significant fraction of time. This is because, when a task has a small reward but a large duration of time, it is better to throw the task away and wait idle for a short amount of time in order to see a new task with a hopefully larger reward.

\begin{figure}[t]
   \centering
   \includegraphics[width=4in]{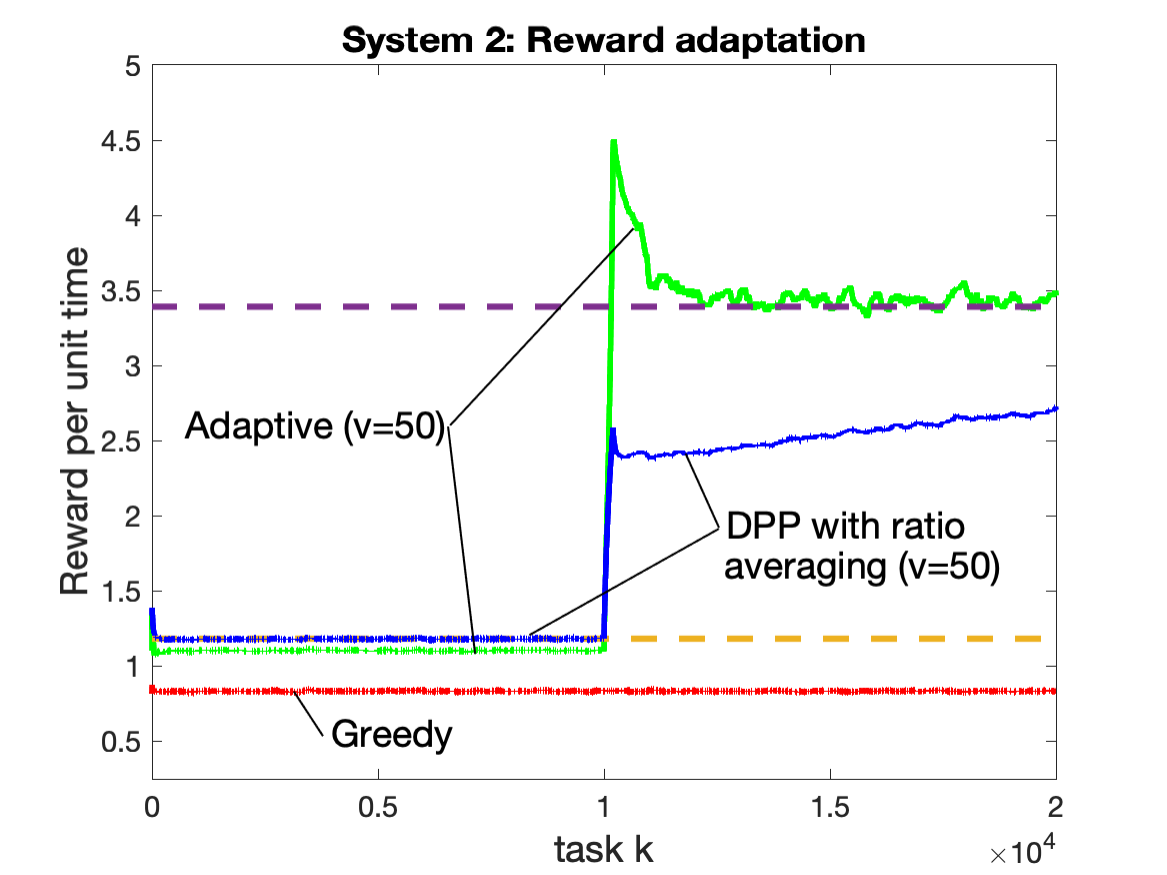} 
   \caption{Adaptation performance when the distribution is changed halfway through the simulation. Horizontal asymptotes are $\theta_1^*$ and $\theta_2^*$ for Distribution 1 and Distribution 2. The adaptive algorithm settles into the new optimality point $\theta_2^*$ while  DPP with ratio averaging cannot adapt.}
   \label{fig:adaptrewards}
\end{figure}

\begin{figure}[t]
   \centering
   \includegraphics[width=4in]{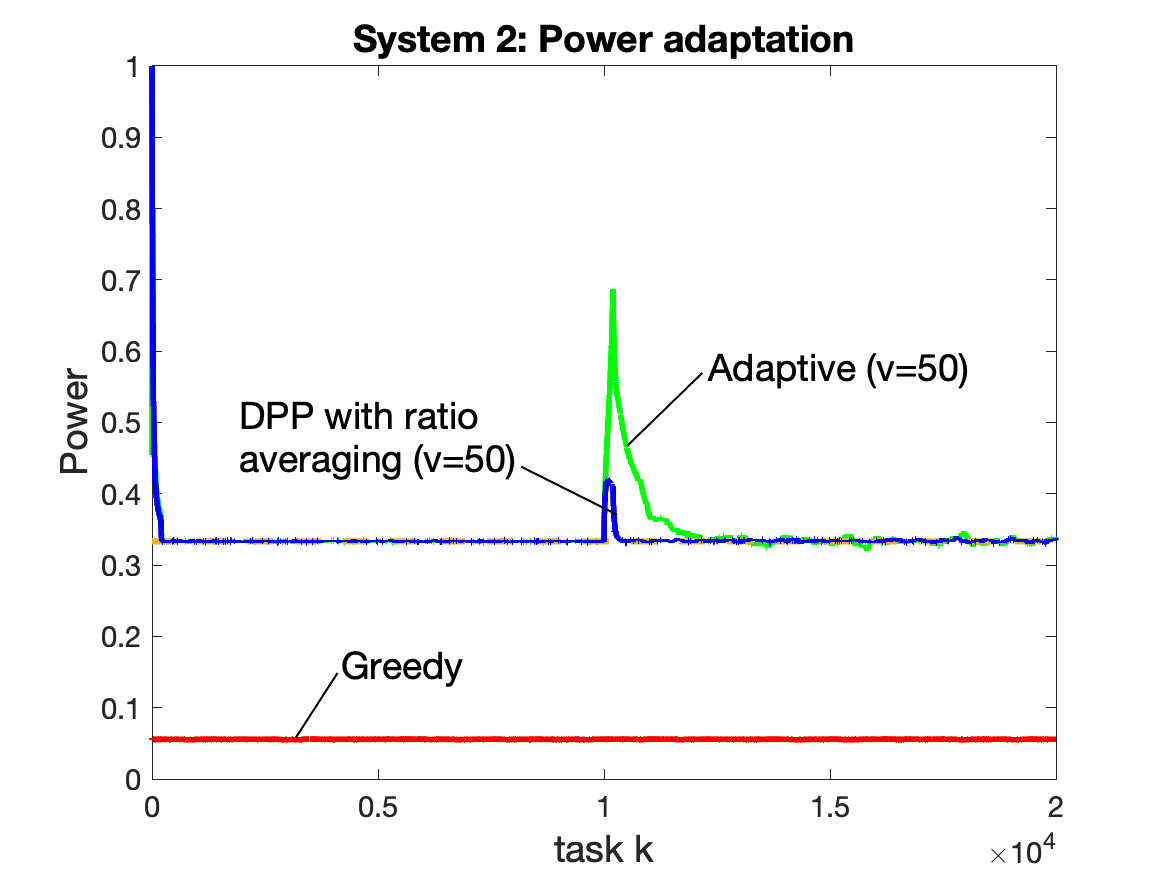} 
   \caption{Corresponding average power for the simulations of Fig. \ref{fig:adaptrewards}. The horizontal asymptote is $p_{av}=1/3$.}
   \label{fig:adaptpowers}
\end{figure}

The adaptation advantages of our proposed 
algorithm are illustrated in Figs.  \ref{fig:adaptrewards} and \ref{fig:adaptpowers}.
Both figures plot performance over a moving average with window size $w=200$ and average over $100$ independent simulations. 
The first half of the simulation uses i.i.d. $\{A[k]\}$ with Distribution 1, the second half uses Distribution 2.  The two horizontal asymptotes in Fig.  \ref{fig:adaptrewards} are the optimal $\theta_1^*$ and $\theta_2^*$ values for Distribution 1 and Distribution 2. As seen in the figure, both the adaptive algorithm and the DPP with ratio averaging heuristic quickly converge to the optimal $\theta_1^*$ value associated with Distribution 1 (the rewards under the adaptive algorithm are slightly less than that of the heuristic). At the time of change, the adaptive algorithm has a spike that lasts for roughly 2000 tasks until it settles down to $\theta_2^*$.  This can be viewed as the adaptation time, and can be decreased by decreasing the value of $v$ (at a corresponding accuracy cost). 
It converges from \emph{above} to $\theta_2^*$ because, as seen in Fig. \ref{fig:adaptpowers}, the spike marks a period of using more power than the required amount.   In contrast, the DPP with ratio averaging algorithm cannot adapt and never increases to the optimal value of $\theta_2^*$.

The distributions used are as follows: For each task $k$ there are two independent random variables $U_1[k], U_2[k] \sim Unif[0,1]$ generated. Then

\begin{itemize} 
\item Distribution 1: Note that $R_2[k]=R_3[k]$ and $T_2[k]<T_3[k]$ always.
\begin{align*}
&(T_2[k], R_2[k], Energy_2[k]) = (1+9U_1[k], 10U_1[k](U_2[k]+1), 1+9U_1[k])\\
&(T_3[k], R_3[k], Energy_3[k])=(6 + 6U_1[k], 10U_1[k](U_2[k]+1), U_1[k])
\end{align*}

\item Distribution 2: The $R_2[k]$ value is increased in comparison to Distribution 1.
\begin{align*}
&(T_2[k], R_2[k], Energy_2[k]) = (1+9U_1[k], \min\{20(U_2[k]+1), 20\}, 1+9U_1[k])\\
&(T_3[k], R_3[k], Energy_3[k])=(6 + 6U_1[k], 10U_1[k](U_2[k]+1), U_1[k])
\end{align*}
\end{itemize}

\subsection{Weight adjustment} 

While the $\Theta(1/\epsilon^2)$ adaptation times achieved by the proposed algorithm are asymptotically optimal, an important question is whether the coefficient can be improved by some constant factor. Specifically, this section attempts to reduce the 2000-task adaptation time seen in the spikes of Figs. \ref{fig:adaptrewards} and \ref{fig:adaptpowers} without degrading accuracy. We observe that the $J[k]$ and $Y[k]$ queues are weighted equally in the Lyapunov function $L[k]=\frac{1}{2}J[k]^2+\frac{1}{2}Y[k]^2$. More weight can be placed on $Y[k]$ to emphasize the average power constraint and thereby reduce the spike in Fig. \ref{fig:adaptpowers}. This can be done with no change in the mathematical analysis by redefining the penalty as $Y'[k]=wY[k]$ for some constant $w>0$. The constraint $\overline{Y}\leq 0$ is the same as $\overline{Y}'\leq 0$. We use $w=2$ and also double the $v$ parameter from 50 to 100, which maintains the same relative weight between reward and $Q[k]$ but deemphasizes the $J[k]$ virtual queue by a factor of 2.  

Figs. \ref{fig:graph7} and \ref{fig:graph8} plot performance over $3 \times 10^4$ tasks with Distribution 1 in the first third, Distribution 2 in the second third, and Distribution 1 in the final third.  The adaptive algorithm and the DPP with ratio averaging algorithm use the same parameters as in Figs. \ref{fig:adaptrewards} and \ref{fig:adaptpowers}.  The reweighted adaptive algorithm uses $v=100$ and $Y'[k]=2Y[k]$. It can be seen that the reweighting decreases adaptation time with no noticeable change in accuracy. This illustrates the benefits of weighting the power penalty $Y[k]$ more heavily than the virtual queue $J[k]$.

The simulations in Figs. \ref{fig:graph7} and \ref{fig:graph8} further show that the proposed adaptive algorithms can effectively handle multiple distributional changes. Indeed, the reward settles close to the new optimality point after each change in distribution.  In contrast, the DPP with ratio averaging algorithm, which was not designed to be adaptive, appears completely lost after the first distribution change and never recovers.  This emphasizes the importance of adaptive algorithms.

\begin{figure}[htbp]
   \centering
   \includegraphics[width=4in]{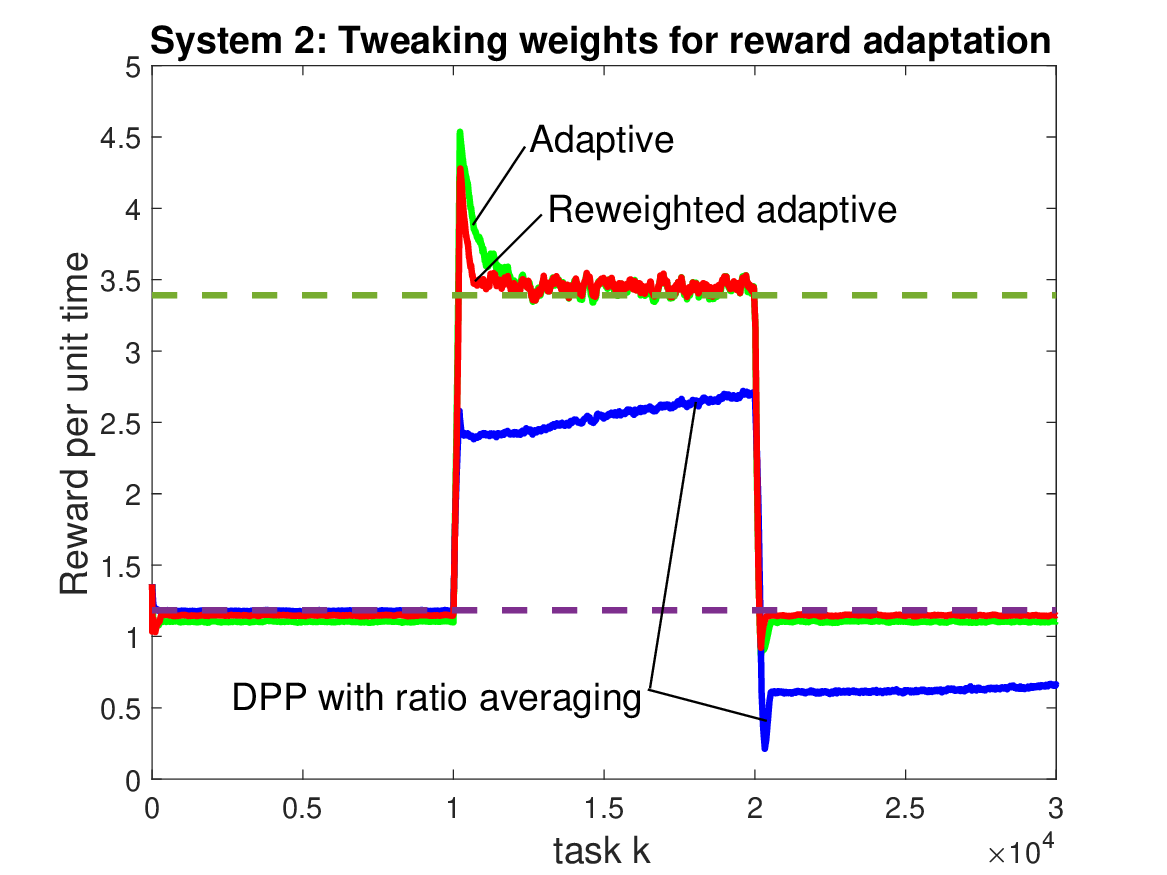} 
   \caption{Comparing the reward of the reweighted adaptive scheme over a simulation where the distribution is changed twice. Horizontal asymptotes are $\theta_1^*$ and $\theta_2^*$ for Distribution 1 and Distribution 2.  Parameters for the adaptive and DPP with ratio averaging algorithms are the same as for Figs. \ref{fig:adaptrewards} and \ref{fig:adaptpowers}.}
   \label{fig:graph7}
\end{figure}

\begin{figure}[htbp]
   \centering
   \includegraphics[width=4in]{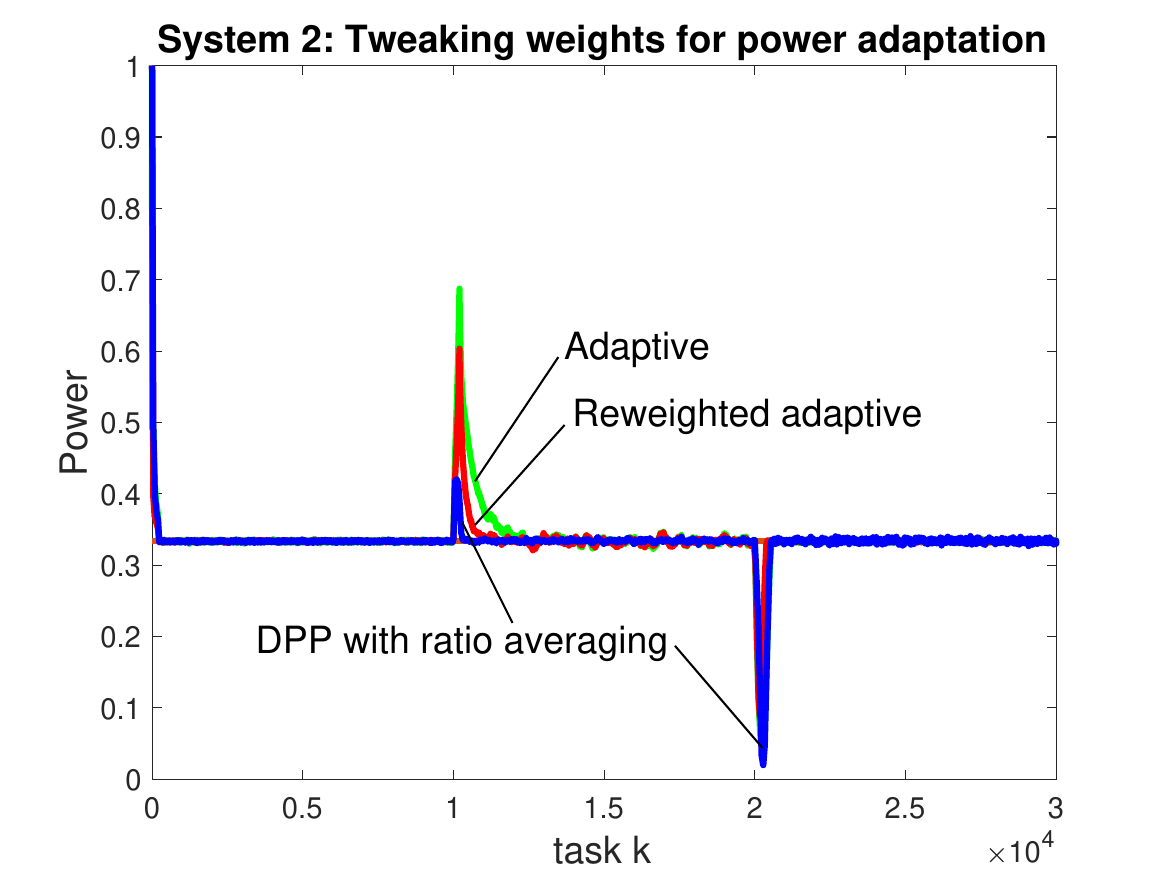} 
   \caption{Corresponding average power for the simulations of Fig. \ref{fig:graph7}. The horizontal asymptote is $p_{av}=1/3$.}
   \label{fig:graph8}
\end{figure}



\section{Conclusion} 

This paper gives an adaptive algorithm for renewal optimization, where decisions for each task $k$ determine the duration of the task, the reward for the task, and a vector of penalties. 
The algorithm operates without knowledge of system probabilities, has a low per-task decision complexity,
 and has asymptotic performance that achieves an optimal convergence time bound. A new
hierarchical decision rule enables the algorithm to achieve within $\epsilon$ of optimality over any sequence of $\Theta(1/\epsilon^2)$ tasks over which the probability distribution is fixed, regardless of system history.  The $\Theta(1/\epsilon^2)$ adaptation time matches prior converse results that show convergence time must be $\Omega(1/\epsilon^2)$ even when no penalty processes $Y[k]$ are considered and when convergence focuses on tasks $\{1, \ldots, m\}$ rather than on any consecutive sequence of $m$ tasks.

\section*{Appendix -- Details on Lemma \ref{lem:1}}

Part (a) of Lemma \ref{lem:1} uses arguments similar to those in \cite{sno-text}:  The definition of $\Gamma$ ensures any $(t,r,y)\in\Gamma$ can be realized as an \emph{unconditional expectation} $\expect{(T^*[k],R^*[k], Y^*[k])}$ under some particular conditional probability rule for choosing a row given the observed $A[k]$. The vector $(T^*[k], R^*[k], Y^*[k])$ can be made independent of $H[k]$ by using the independent $U\sim \mbox{Unif}[0,1]$ to make randomized decisions (the variable $U$ is defined in Section \ref{section:stochastic}). Independence implies that any version of the conditional 
expectation $\expect{(T^*[k],R^*[k], Y^*[k])|H[k]}$ is almost surely equal to the unconditional expectation. 

Part (b) of Lemma \ref{lem:1} uses concepts from Lemma 11 and Theorem 6 in \cite{neely-renewal-jmlr}: Define 
$$W[k]=(T[k], R[k], Y[k]) - \expect{(T[k],R[k],Y[k])|U, H[k]}$$  
Observe that for each $k>1$, $(W[1], \ldots, W[k-1])$ is $(U,H[k])$-measurable. Fix $k_1<k_2$. We have by iterated expectations: 
\begin{align} 
\expect{W[k_1]W[k_2]^{\top}} &= \expect{\expect{W[k_1]W[k_2]^{\top}|(U,H[k_2])}}\nonumber\\
&=\expect{W[k_1]\expect{W[k_2]^{\top}|(U,H[k_2])}}\label{eq:app1} \\
&=\expect{W[k_1]0^{\top}} \label{eq:app2} \\
&=0\nonumber
\end{align}
where \eqref{eq:app1} holds because $W[k_1]$ is $(U,H[k_2])$-measurable; \eqref{eq:app2} holds by definition of $W[k_2]$. 
Then $\{W[k]\}_{k=1}^{\infty}$ are bounded and uncorrelated random vectors, so 
\begin{equation} \label{eq:z-goes}
\lim_{m\rightarrow\infty} \frac{1}{m}\sum_{k=1}^mW[k]=0 \quad \mbox{(almost surely)} 
\end{equation} 
Since $(U,H[k])$ is independent of $A[k]$, arguments similar to Lemma 2 in \cite{neely-renewal-jmlr} imply 
$\expect{(T[k],R[k],Y[k])|U,H[k]}\in  \overline{\Gamma}$ almost surely for all $k$. Then, convexity of $\overline{\Gamma}$ implies (almost surely) 
$$ \frac{1}{m}\sum_{k=1}^m\expect{(T[k],R[k],Y[k])|U,H[k]} \in \overline{\Gamma} \quad \forall m \in \{1,2, 3, \ldots\} $$
Substituting the definition of $W[k]$ implies (almost surely):
$$ \frac{1}{m}\sum_{k=1}^m\left((T[k],R[k],Y[k])- W[k]\right) \in \overline{\Gamma} \quad \forall m \in\{1, 2, 3, \ldots\}$$
meaning the distance between $(\overline{T}[m], \overline{R}[m], \overline{Y}[m])$ and the set $\overline{\Gamma}$ 
is almost surely less than or equal to $\norm{\frac{1}{m}\sum_{k=1}^mW[k]}$, which  converges to 0 almost surely by \eqref{eq:z-goes}.

\section*{Acknowledgement} 

This work was supported in part by NSF grant SpecEES 1824418.

\bibliographystyle{unsrt}
\bibliography{../../../latex-mit/bibliography/refs}
\end{document}